\definecolor{labelkey}{HTML}{0455BF}
\definecolor{refkey}{rgb}{0,0.6,0.0}
\definecolor{dblue}{HTML}{0455BF}
\definecolor{dgreen}{HTML}{02724A}
\definecolor{myellow}{HTML}{D97904}
\definecolor{dred}{HTML}{D90404}
\newcommand{\Ccart}{\ensuremath{\raisebox{-0.5mm}%
{\mbox{\Large{$\times$}}}}}
\newcommand{\Cart}{\ensuremath{\raisebox{-0.5mm}%
{\mbox{\LARGE{$\times$}}}}}
\renewcommand{\leq}{\ensuremath{\leqslant}}
\renewcommand{\geq}{\ensuremath{\geqslant}}
\newcommand{\Argmin}{\ensuremath{\text{\rm Argmin}\,}}
\newcommand{\minimize}[2]{\ensuremath{\underset{\substack{{#1}}}%
{\text{\rm minimize}}\;\;#2}}
\newcommand{\scal}[2]{{\langle{{#1}\mid{#2}}\rangle}}
\newcommand{\pair}[2]{\langle{{#1},{#2}}\rangle}
\newcommand{\menge}[2]{\big\{{#1}~|~{#2}\big\}} 
\newcommand{\XXX}{\ensuremath{\boldsymbol{\mathcal{X}}}}
\newcommand{\ZZZ}{\ensuremath{\boldsymbol{\mathcal{Z}}}}
\newcommand{\UU}{\ensuremath{{\mathcal{U}}}}
\newcommand{\XX}{\ensuremath{{\mathcal{X}}}}
\newcommand{\YY}{\ensuremath{{\mathcal{Y}}}}
\newcommand{\NN}{\ensuremath{\mathbb{N}}}
\newcommand{\Sum}{\ensuremath{\displaystyle\sum}}
\newcommand{\emp}{\ensuremath{\varnothing}}
\newcommand{\Id}{\ensuremath{\mathrm{Id}}}
\newcommand{\RR}{\ensuremath{\mathbb{R}}}
\newcommand{\RP}{\ensuremath{\left[0,{+}\infty\right[}}
\newcommand{\RM}{\ensuremath{\left]{-}\infty,0\right]}}
\newcommand{\RMM}{\ensuremath{\left]{-}\infty,0\right[}}
\newcommand{\mm}{\ensuremath{\varpi}}
\newcommand{\RPP}{\ensuremath{\left]0,{+}\infty\right[}}
\newcommand{\RPX}{\ensuremath{\left[0,{+}\infty\right]}}
\newcommand{\RX}{\ensuremath{\left]{-}\infty,{+}\infty\right]}}
\newcommand{\RXX}{\ensuremath{\left[{-}\infty,{+}\infty\right]}}
\newcommand{\weakly}{\ensuremath{\rightharpoonup}}
\newcommand{\exi}{\ensuremath{\exists\,}}
\newcommand{\zer}{\text{\rm zer}\,}
\newcommand{\pinf}{\ensuremath{{{+}\infty}}}
\newcommand{\minf}{\ensuremath{{{-}\infty}}}
\newcommand{\dom}{\ensuremath{\text{\rm dom}\,}}
\newcommand{\barc}{\ensuremath{\text{\rm bar}\,}}
\newcommand{\rec}{\ensuremath{\text{\rm rec}\,}}
\newcommand{\cdom}{\ensuremath{\overline{\text{\rm dom}}\,}}
\newcommand{\prox}{\ensuremath{\text{\rm prox}}}
\newcommand{\proj}{\ensuremath{\text{\rm proj}}}
\newcommand{\gra}{\ensuremath{\text{\rm gra}\,}}
\newcommand{\inte}{\ensuremath{\text{\rm int}\,}}
\newcommand{\sri}{\ensuremath{\text{\rm sri}\,}}
\newcommand{\infconv}{\ensuremath{\mbox{\small$\,\square\,$}}}
\newcommand{\rocky}{\ensuremath{\mbox{\footnotesize$\odot$}}}
\def\abstract{\noindent{\bfseries Abstract}. \ignorespaces}
\newtheorem{theorem}{Theorem}[section]
\newtheorem{lemma}[theorem]{Lemma}
\newtheorem{proposition}[theorem]{Proposition}
\theoremstyle{plain}{\theorembodyfont{\rmfamily}%
}
\theoremstyle{plain}{\theorembodyfont{\rmfamily}%
\newtheorem{example}[theorem]{Example}}
\theoremstyle{plain}{\theorembodyfont{\rmfamily}%
\newtheorem{remark}[theorem]{Remark}}
\theoremstyle{plain}{\theorembodyfont{\rmfamily}%
}
\theoremstyle{plain}{\theorembodyfont{\rmfamily}%
}
\theoremstyle{plain}{\theorembodyfont{\rmfamily}%
\newtheorem{definition}[theorem]{Definition}}
\theoremstyle{plain}{\theorembodyfont{\rmfamily}%
\newtheorem{problem}[theorem]{Problem}}
\theoremstyle{plain}{\theorembodyfont{\rmfamily}%
\newtheorem{notation}[theorem]{Notation}}
\setlist[enumerate]{itemsep=2pt}
\setlist[itemize]{itemsep=2pt}
\numberwithin{equation}{section}
\newcommand{\email}[1]{\href{mailto:#1}{\nolinkurl{#1}}}
\author[1]{Luis M. Brice\~{n}o-Arias}
\author[2]{Patrick L. Combettes}
\affil[1]{Universidad T\'ecnica Federico Santa Mar\'ia,
Departamento de Matem\'atica, Santiago, Chile
\email{luis.briceno@usm.cl}\medskip}
\affil[2]{North Carolina State University,
Department of Mathematics, Raleigh, USA

\email{plc@math.ncsu.edu}
}
\begin{document}

\title{\sffamily\huge A Perturbation Framework for Convex
Minimization and Monotone Inclusion Problems with Nonlinear
Compositions\thanks{Contact author: 
P. L. Combettes.
Email: \email{plc@math.ncsu.edu}.
Phone: +1 919 515 2671.
The work of L. M. Brice\~{n}o-Arias was supported by Centro de
Modelamiento Matem\'atico (CMM), ACE210010 and FB210005, BASAL
funds for centers of excellence and FONDECYT 1190871 from
ANID-Chile, and the work of P. L. Combettes was supported by the
National Science Foundation under grant DMS-1818946.}
}

\date{~}

\maketitle

\begin{abstract} 
\noindent 
We introduce a framework based on Rockafellar's perturbation theory
to analyze and solve general nonsmooth convex minimization and
monotone inclusion problems involving nonlinearly composed
functions as well as linear compositions. Such problems have been
investigated only from a primal perspective and only for nonlinear
compositions of smooth functions in finite-dimensional spaces in
the absence of linear compositions. In the context of Banach
spaces, the proposed perturbation analysis serves as a foundation
for the construction of a dual problem and of a maximally monotone
Kuhn--Tucker operator which is decomposable as the sum of simpler
monotone operators. In the Hilbertian setting, this decomposition
leads to a block-iterative primal-dual algorithm that fully
splits all the components of the problem and appears to be the 
first proximal splitting algorithm for handling nonlinear composite
problems. Various applications are discussed.
\end{abstract}

\begin{keywords}
Convex optimization, 
duality,
monotone operator, 
nonlinear composition,
perturbation theory,
proximal method,
splitting algorithm.
\end{keywords}

\noindent
{\bfseries MSC2010 subject classification:}
46N10, 47J20, 49M27, 49M29, 65K05, 90C25.

\section{Introduction}
This paper concerns the analysis and the numerical solution of
optimization and monotone inclusion problems involving nonlinear
compositions of convex functions. The novelty and difficulty of
such formulations reside in the nonlinear composite terms and, in
particular, in the design of a splitting mechanism that will fully
decompose them towards a fine convex analysis of the problem and
the development of efficient numerical methods. To isolate this
difficulty, we first study the following simpler minimization
formulation, where $\Gamma_0(\XX)$ designates the class of proper
lower semicontinuous convex functions from a Banach space $\XX$ to
$\RX$, $\sri$ the strong relative interior, and $\infconv$ the
infimal convolution operation (see Section~\ref{sec:2} for
notation).

\begin{problem}
\label{prob:1}
Let $\XX$ and $\YY$ be reflexive real Banach spaces, let
$\phi\in\Gamma_0(\RR)$ be increasing and not constant, let
$f\in\Gamma_0(\XX)$, let $g\in\Gamma_0(\YY)$, let
$\ell\in\Gamma_0(\YY)$, let $L\colon\XX\to\YY$ be linear and 
bounded, and let $h\in\Gamma_0(\XX)$. Set
\begin{equation}
\label{e:c1}
\phi\circ f\colon\XX\to\RX\colon x\mapsto
\begin{cases}
\phi\big(f(x)\big),&\text{if}\;\;f(x)\in\dom\phi;\\
\pinf,&\text{if}\;\;f(x)\notin\dom\phi
\end{cases}
\end{equation}
and suppose that the following hold: 
\begin{enumerate}[label={\rm[\alph*]}]
\item 
\label{a:1i}
$(\exi z\in\dom f)$ $f(z)\in\inte\dom\phi$.
\item 
\label{a:1iii}
$0\in\sri(f^{-1}(\dom\phi)-\dom h)$.
\item 
\label{a:1ii}
$0\in\sri(\dom g^*-\dom \ell^*)$.
\item 
\label{a:1iv}
$0\in\sri(L(\dom h\cap f^{-1}(\dom\phi))-\dom g-\dom\ell)$.
\end{enumerate}
The goal is to 
\begin{equation}
\label{e:p1}
\minimize{x\in\XX}{(\phi\circ f)(x)+(g\infconv\ell)(Lx)+h(x)},
\end{equation}
and the set of solutions is denoted by $\mathscr{P}$.
\end{problem}

To motivate our investigation, let us consider a few notable
special cases of Problem~\ref{prob:1}.

\begin{example}
\label{ex:1}
In Problem~\ref{prob:1} suppose that $\phi=\iota_{\RM}$. Then
\eqref{e:p1} reduces to the constrained minimization problem
\begin{equation}
\label{e:pineq}
\minimize{\substack{x\in\XX\\ f(x)\leq 0}}
{(g\infconv\ell)(Lx)+h(x)},
\end{equation}
which is pervasive in nonlinear programming. 
For instance, suppose that $\XX=\RR^N$ and that
$f=\|\cdot\|^p_p-\eta^p$, where 
$\eta\in\RPP$ and 
$p\in\left[1,\pinf\right[$. Then \eqref{e:pineq} becomes
\begin{equation}
\label{e:pineq2}
\minimize{\substack{x\in\RR^N\\ \|x\|_p\leq\eta}}
{(g\infconv\ell)(Lx)+h(x)}.
\end{equation}
An instance of \eqref{e:pineq2} in the context of machine learning
is found in \cite[Section~4.1]{Jagg13}. 
\end{example} 

\begin{example}
\label{ex:3}
Let $\theta\in\Gamma_0(\RR)$ be an increasing function such that
$\dom\theta=\left]\minf,\eta\right[$ for some $\eta\in\RPX$,
$\lim_{\xi\uparrow\eta}\theta(\xi)=\pinf$, and 
$(\rec\theta)(1)>0$. Let $\alpha\colon\RPP\to\RPP$ be such that 
$\lim_{\rho\downarrow 0}\alpha(\rho)=0$ and 
$\varliminf_{\rho\downarrow 0}{\alpha(\rho)}/{\rho}>0$.
In Problem~\ref{prob:1}, set $\XX=\YY=\RR^N$, $h=0$, 
$\ell=\iota_{\{0\}}$, $L=\Id$, and given $\rho\in\RPP$, 
$\phi\colon\xi\mapsto\alpha(\rho)\theta(\xi/\rho)$. 
Then \eqref{e:p1} becomes
\begin{equation}
\minimize{x\in\RR^N}{\alpha(\rho)\theta\big(f(x)/\rho\big)+g(x)}.
\end{equation}
The asymptotic behavior of this family of penalty-barrier
minimization problems as $\rho\downarrow 0$ is investigated in 
\cite{Ausl97}.
\end{example}

\begin{example}
\label{ex:2}
In Problem~\ref{prob:1} suppose that $\ell=\iota_{\{0\}}$ and
$\phi=\theta\circ\max\{0,\cdot\}$, where $\theta\in\Gamma_0(\RR)$ 
is even with $\Argmin\theta=\{0\}$. Then
$\phi=\theta\circ\max\{0,\cdot\}$ is an increasing function in
$\Gamma_0(\RR)$ which is not constant and \eqref{e:p1} reduces to 
\begin{equation}
\minimize{x\in\XX}{\theta\big(\max\{0,f(x)\}\big)+g(Lx)+h(x)}.
\end{equation}
For instance, if $C$ is a nonempty closed convex subset of $\XX$
and $f=d_C-\varepsilon$, where $\varepsilon\in\RP$, we recover a
scenario discussed in \cite{Siim19}. A special case is given by
$f=d_C$ and $\theta\colon\xi\mapsto\ln(\rho)-\ln(\rho-\xi)$
if $\xi<\rho$; and $\pinf$ if $\xi\geq\rho$, where $\rho\in\RPP$.
Thus, $\phi\circ f=\ln(\rho)-\ln(\rho-d_C)$ if $d_C<\rho$; and 
$\pinf$ if $d_C\geq\rho$, and it acts as a barrier keeping
solutions at distance at most $\rho$ from the set $C$.
\end{example}

\begin{example}
\label{ex:6}
Let $p\in\left[1,\pinf\right[$ and let 
$(\XX_i)_{i\in I}$ and $(\YY_k)_{k\in K}$ be finite families of
reflexive real Banach spaces. 
For every $i\in I$, let $C_i$ be a nonempty closed convex 
subset of $\XX_i$ and, for every $k\in K$, let 
$g_k\in\Gamma_0(\YY_k)$ and let 
$L_{k,i}\colon\XX_i\to\YY_k$ be a bounded linear operator. 
In Problem~\ref{prob:1}, set $\XX=\bigoplus_{i\in I}\XX_i$,
$\YY=\bigoplus_{k\in K}\YY_k$, 
$f\colon (x_i)_{i\in I}\mapsto(\sum_{i\in I}d_{C_i}^p(x_i))^{1/p}$,
$g\colon (y_k)_{k\in K}\mapsto\sum_{k\in K}g_k(y_k)$, 
$\ell=\iota_{\{0\}}$,
$L\colon(x_i)_{i\in I}\mapsto(\sum_{i\in I}L_{k,i}x_i)_{k\in K}$, 
and $\phi=(\max\{0,\cdot\})^p$. Then \eqref{e:p1} reduces to 
\begin{equation}
\minimize{(x_i)_{i\in I}\in\underset{i\in I}{\Ccart}\XX_i}
{\sum_{i\in I}d^p_{C_i}(x_i)+\sum_{k\in K}g_k
\bigg(\sum_{i\in I}L_{k,i}x_i\bigg)+h\big((x_i)_{i\in I}\big)}.
\end{equation}
This formulation covers signal processing and
location problems \cite{Nmtm09,Siim19,Mord12}.
\end{example}

There is a vast literature on Problem~\ref{prob:1} in the case of
linear compositions, that is, when $\phi\colon\xi\mapsto\xi$. In
this context, the duality theory goes back to \cite{Rock67} when
$h=0$ and $\ell=\iota_{\{0\}}$. On the algorithmic front, the
primal-dual strategy adopted in \cite{Svva12} when $h$ is smooth
and $\ell$ is strongly convex, which originates in \cite{Siop11},
is ultimately rooted in Fenchel--Rockafellar duality and boils down
to finding a zero of an associated Kuhn--Tucker operator via
monotone operator splitting methods. Further algorithmic
developments along these lines in the linear composition setting
can be found in \cite{Siop11,Chen94,Svva12,Cond13,Bang13}.
Unfortunately this methodology is not extendible to
Problem~\ref{prob:1} in the presence of a nonlinear function
$\phi$. In the nonlinear setting, in terms of convex analysis, the
conjugate and the subdifferential of $\phi\circ f$ in \eqref{e:c1}
are derived in \cite{Thib94,Thib96}, building up on the work
of \cite{Kuta77,Lema85,Lesc68,Rubi77}; see also \cite{Burk21} for
the Euclidean setting. Optimality conditions for
the minimization of $\phi\circ f+h$ are established in
\cite{Penn99,Penn00}. Duality theory for the minimization
problem \eqref{e:c1} does not seem to have been studied, even in
the case when $h=0$ and $\ell=\iota_{\{0\}}$. On the numerical
side, in the finite-dimensional setting, with $f$ smooth,
$\YY=\XX$, and $g=h=\ell^*=0$, Problem~\ref{prob:1} has been
studied in \cite{Bolt20} (in the case when $f$ is vector-valued) 
by linearizing the objective function; see also
\cite{Burk85,Burk20,Lewi16,Pauw16,Repe21} and the references
therein for alternative approximations of $f$ in this type of
scenario. However, in the general setting of
Problem~\ref{prob:1}, solution methods are not available. 

To address the gaps identified above, we propose a methodology
based on Rockafellar's perturbation theory. This general theory 
was initiated in \cite{Rock69} and further developed in
\cite{Roc70a,Rock74}; see also \cite{Joly71}. Specifically, we
introduce for Problem~\ref{prob:1} the perturbation function
\begin{align}
\label{e:F}
F\colon\XX\times\RR\times\YY&\to\RX\nonumber\\
(x,\xi,y)&\mapsto 
\begin{cases}
\phi\big(f(x)+\xi\big)+(g\infconv\ell)(Lx+y)+h(x),&\text{if}\;\; 
f(x)+\xi\in\dom\phi;\\
\pinf,&\text{if}\;\;f(x)+\xi\notin\dom\phi.
\end{cases}
\end{align}
In this context, perturbation variables $\xi\in\RR$ and $y\in\YY$
are associated to the nonlinear and the linear composition,
respectively, and Problem~\ref{prob:1} can be expressed as 
\begin{equation}
\minimize{x\in\XX}{F(x,0,0)}.
\end{equation}
By specializing the general theory of \cite{Rock74} to the
perturbation function \eqref{e:F}, we derive a dual for
Problem~\ref{prob:1} involving variables for each composition,
together with a Kuhn--Tucker operator. We then show that this
Kuhn--Tucker operator can be decomposed as a sum of elementary
maximally monotone operators. In the Hilbertian setting, we derive
their resolvents and propose proximal splitting algorithms that use
$\phi$, $f$, $g$, $\ell$, $h$, and $L$ separately to solve
Problem~\ref{prob:1} and its dual. In this endeavor, we leverage
the fact that, while the proximity operator of $\phi\circ f$ is
usually unknown, those of $\phi$ and $f$ are often available. The
resulting algorithms capture methods that were known
in the case when $\phi\colon\xi\mapsto\xi$.

Next, we consider the broader setting of monotone inclusions. We
first note that the minimization Problem~\ref{prob:1} is a special
case of the inclusion problem 
\begin{equation}
\label{e:p31}
\text{find}\;\;x\in\XX\;\;\text{such that}\;\; 
0\in\partial(\phi\circ f)(x)
+\big(L^*\circ(B\infconv D)\circ L\big)x+Ax,
\end{equation}
involving a potential term $\phi\circ f$ and the maximally 
monotone operators $A=\partial h$, $B=\partial g$, and
$D=\partial\ell$, where $B\infconv D$ designates the
parallel sum of $B$ and $D$. For general maximally monotone 
operators this type of inclusion problem mixing
potential and nonpotential terms appears in several areas,
including game theory, saddle problems, evolution equations,
variational inequalities, deep neural networks, and
diffusion/convection problems in physics; see, e.g.,
\cite{Adly22,Alwa21,Atto10,Siop11,Bric13,Svva20,Sign21,%
Facc03,Lion69,Zeid90}. Building up on the tools developed for
Problem~\ref{prob:1}, we shall address the following general form
of \eqref{e:p31}.

\begin{problem}
\label{prob:11}
Let $\XX$ be a reflexive real Banach space, let $I$ and $K$ be
disjoint finite sets, and let $(\beta,\chi)\in\RPP^2$. For every
$i\in I$, let $\phi_i\in\Gamma_0(\RR)$ be increasing and not 
constant and let $f_i\in\Gamma_0(\XX)$ be such that
$(\inte\dom\phi_i)\cap f_i(\dom f_i)\neq\emp$. 
For every $k\in K$, let $\YY_k$ be a reflexive real
Banach space, let $B_k\colon\YY_k\to 2^{\YY_k^*}$ and
$D_k\colon\YY_k\to 2^{\YY_k^*}$ be maximally monotone, and let
$L_k\colon\XX\to\YY_k$ be linear and bounded. Furthermore, let
$A\colon\XX\to 2^{\XX^*}$ be maximally monotone, let
$C\colon\XX\to\XX^*$ be $\beta$-cocoercive, and let
$Q\colon\XX\to\XX^*$ be monotone and $\chi$-Lipschitzian. The goal
is to
\begin{equation}
\label{e:prob11}
\text{find}\;\; x\in\XX\;\;\text{such that}\;\; 
0\in\sum_{i\in I}\partial(\phi_i\circ f_i)(x)
+\sum_{k\in K}\big(L^*_k\circ(B_k\infconv D_k)\circ 
L_k\big)x+Ax+Cx+Qx.
\end{equation}
\end{problem}

An algorithmic framework will be proposed to solve
Problem~\ref{prob:11} in the Hilbertian setting and various 
applications will be discussed.
In particular, we shall design a proximal algorithm to solve
minimization problems involving sums of maxima of convex functions.

We present our notation and provide preliminary results in
Section~\ref{sec:2}. In Section~\ref{sec:3}, the perturbation $F$
introduced in \eqref{e:F} is employed to derive a dual for
Problem~\ref{prob:1} and its Kuhn--Tucker operator, which is shown
to be maximally monotone. We then obtain primal-dual solutions as
the zeros of this Kuhn--Tucker operator, which we decompose as the
sum of elementary monotone operators. This decomposition is
exploited in Section~\ref{sec:4} to derive splitting algorithms
that use $\phi$, $f$, $g$, $\ell$, $h$, and $L$ separately to solve
Problem~\ref{prob:1} when all the Banach spaces are Hilbertian.
Section~\ref{sec:5} is devoted to the analysis and the numerical
solution of Problem~\ref{prob:11} based on the tools developed in
Sections~\ref{sec:3} and~\ref{sec:4}. A highlight of that section
is a block-iterative algorithm that fully splits all the components
of Problem~\ref{prob:11}. Even in its nonblock-iterative
implementation, this appears to be the first proximal splitting
algorithm for handling nonlinear composite problems. Applications 
of the proposed framework are presented in Section~\ref{sec:6}.

\section{Notation and preliminary results}
\label{sec:2}
Let $(\XX,\|\cdot\|)$ and $(\YY,\|\cdot\|)$ be reflexive real
Banach spaces, let $\XX^*$ and $\YY^*$ denote their respective
topological duals, and let $\scal{\cdot}{\cdot}$ denote the
standard bilinear forms on $\XX\times\XX^*$ and $\YY\times\YY^*$.
Throughout, we use the following notational conventions.

\begin{notation}
\label{n:1}
A generic vector in $\XX$ is denoted by $x$ and
a generic vector in $\XX^*$ is denoted by $x^*$.
Bold symbols such as $\boldsymbol{z}$ denote elements in product
spaces.
\end{notation}

The symbol $\XX\oplus\YY$ designates the standard product 
vector space $\XX\times\YY$ equipped with the pairing
\begin{equation}
\big(\forall(x,y)\in\XX\times\YY\big)
\big(\forall(x^*,y^*)\in\XX^*\times\YY^*\big)\quad
\pair{(x,y)}{(x^*,y^*)}=\pair{x}{x^*}+\pair{y}{y^*}
\end{equation}
and the norm
\begin{equation}
\label{e:n}
\big(\forall(x,y)\in\XX\times\YY\big)
\quad\|(x,y)\|=\sqrt{\|x\|^2+\|y\|^2}.
\end{equation}
The power set of $\XX^*$ is denoted by $2^{\XX^*}$. Let 
$A\colon\XX\to 2^{\XX^*}$ be a set-valued operator. We denote by
$\dom A=\menge{x\in\XX}{Ax\neq\emp}$ the domain of $A$,
by $\zer A=\menge{x\in\XX}{0\in Ax}$ the set of zeros of $A$, by 
$\gra A=\menge{(x,x^*)\in\XX\times\XX^*}{x^*\in Ax}$ the graph of
$A$, and by $A^{-1}$ the inverse of $A$, which has graph 
$\menge{(x^*,x)\in\XX^*\times\XX}{x^*\in Ax}$. The parallel sum of
$A$ and $B\colon\XX\to 2^{\XX^*}$ is
\begin{equation}
\label{e:94}
A\infconv B=\big(A^{-1}+B^{-1}\big)^{-1}.
\end{equation}
Moreover, $A$ is monotone if
\begin{equation}
\big(\forall (x,x^*)\in\gra A\big)
\big(\forall (y,y^*)\in\gra A\big)
\quad\pair{x-y}{x^*-y^*}\geq 0,
\end{equation}
and maximally so if there exists no monotone operator
$B\colon\XX\to 2^{\XX^*}$ such that 
$\gra A\subset\gra B\neq\gra A$. If $\XX$ is Hilbertian and $A$ is
maximally monotone, $J_A=(\Id+A)^{-1}$ is the resolvent of $A$,
which is single-valued with $\dom J_A=\XX$. If $A$ is
single-valued and $\beta\in\RPP$, $A$ is $\beta$-cocoercive if 
\begin{equation}
(\forall x\in\XX)(\forall y\in\XX)\quad
\pair{x-y}{Ax-Ay}\geq\beta\|Ax-Ay\|^2.
\end{equation}
Let $f\colon\XX\to\RXX$. Then $f$ is proper if 
$\minf\notin f(\XX)\neq\{\pinf\}$, the domain of $f$
is $\dom f=\menge{x\in\XX}{f(x)<\pinf}$, and the set
of minimizers of $f$ is 
$\Argmin f=\menge{x\in\dom f}{(\forall y\in\XX)\;f(x)\leq f(y)}$.
The conjugate of $f$ is 
$f^*\colon x^*\mapsto\sup_{x\in\XX}(\pair{x}{x^*}-f(x))$.
If $f$ is proper, its subdifferential is 
\begin{equation}
\label{e:subdiff}
\partial f\colon\XX\to 2^{\XX^*}\colon x\mapsto 
\menge{x^*\in\XX^*}{(\forall y\in\dom f)\;\:
\pair{y-x}{x^*}+f(x)\leq f(y)},
\end{equation}
its recession function is 
$\rec f\colon\XX\to\RX\colon y\mapsto\sup_{x\in\dom f}
(f(x+y)-f(x))$, and its perspective is
\begin{equation}
\label{e:pers}
\widetilde{f}\colon\XX\times\RR\to\RX\colon (x,\xi)\mapsto
\begin{cases}
\xi\,f({x}/\xi),&\text{if}\;\:\xi>0;\\
(\rec{f})({x}),&\text{if}\;\:\xi=0;\\
\pinf,&\text{otherwise.}
\end{cases}
\end{equation}
We set
\begin{equation}
\label{e:66}
(\forall\xi\in\RP)\quad\xi\rocky f=
\begin{cases}
\iota_{\cdom f},&\text{if}\;\:\xi=0;\\
\xi f,&\text{if}\;\:\xi>0.
\end{cases}
\end{equation} 
The infimal convolution of $f$ with a proper function 
$\ell\colon\XX\to\RX$ is 
\begin{equation}
f\infconv\ell\colon\XX\to\RXX
\colon x\mapsto\inf_{y\in\XX}\big(f(y)+\ell(x-y)\big).
\end{equation}
Let $C$ be a convex subset of $\XX$. The indicator function of $C$
is denoted by $\iota_C$, the support function of $C$ by $\sigma_C$,
the strong relative interior of $C$, 
i.e., the set of points $x\in C$ such that the cone generated by
$-x+C$ is a closed vector subspace of $\XX$, by $\sri C$, and the
distance to $C$ by $d_C$, i.e., 
$d_C\colon x\mapsto\inf_{y\in C}\|x-y\|$. The barrier cone of $C$
is $\barc{C}=\menge{x^*\in\XX^*}{\sup\,\pair{C}{x^*}<\pinf}$ and
the normal cone operator of $C$ is $N_C=\partial\iota_C$.
Now suppose that $\XX$ is Hilbertian. If $f\in\Gamma_0(\XX)$, for
every $x\in\XX$, $\prox_fx$ denotes the unique minimizer of
$f+\|\cdot-x\|^2/2$ and 
$\prox_f=(\Id+\partial f)^{-1}=J_{\partial f}$
is called the proximity operator of $f$ and
$\prox_{f}(\XX)\subset\dom\partial f\subset\dom f$.
If $C$ is nonempty and closed,
$\proj_C=\prox_{\iota_C}$ is the projection operator onto $C$. For
background on convex analysis and monotone operators, see
\cite{Livre1,Zali02}.

The following facts will be required subsequently.

\begin{lemma}
\label{l:66}
Let $\XX$ be a reflexive real Banach space, let
$f\in\Gamma_0(\XX)$, and let $\xi\in\RP$. Then the following hold:
\begin{enumerate}
\item
\label{l:66i}
$\xi\rocky f\in\Gamma_0(\XX)$.
\item
\label{l:66ii}
$[\widetilde{f}(\cdot,\xi)]^*=\xi\rocky f^*$ and
$(\xi\rocky f)^*=\widetilde{f^*}(\cdot,\xi)$. 
\item
\label{l:66iii}
We have
\begin{equation}
\partial(\xi\rocky f)=
\begin{cases}
N_{\cdom f},&\text{if}\;\;\xi=0;\\
\xi\partial f,&\text{if}\;\;\xi\in\RPP.
\end{cases}
\end{equation}
\item
\label{l:66iv}
Suppose that $\XX$ is Hilbertian. Then 
\begin{equation}
\label{e:12}
\prox_{\xi\rocky f}=
\begin{cases}
\proj_{\cdom f}, &\text{if}\;\;\xi=0;\\
\prox_{\xi f}, &\text{if}\;\;\xi\in\RPP.
\end{cases}
\end{equation}
\end{enumerate}
\end{lemma}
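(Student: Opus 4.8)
The plan is to argue case by case according to whether $\xi=0$ or $\xi\in\RPP$, since the definitions \eqref{e:pers} and \eqref{e:66} both branch at $\xi=0$. Item~\ref{l:66i} is immediate: when $\xi=0$, since $f\in\Gamma_0(\XX)$ its domain is nonempty and convex, so $\cdom f$ is a nonempty closed convex set and $\iota_{\cdom f}\in\Gamma_0(\XX)$; when $\xi\in\RPP$, the function $\xi f$ is a strictly positive multiple of an element of $\Gamma_0(\XX)$, hence lies in $\Gamma_0(\XX)$. The same split shows that $\widetilde{f}(\cdot,\xi)\in\Gamma_0(\XX)$, a fact I will need below: for $\xi\in\RPP$ the map $x\mapsto\xi f(x/\xi)$ is the composition of $f$ with a continuous affine isomorphism followed by multiplication by $\xi>0$, and for $\xi=0$ one has $\widetilde{f}(\cdot,0)=\rec f$, which is proper, lower semicontinuous, and convex.

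For item~\ref{l:66ii}, the core computation is the first identity. When $\xi\in\RPP$, I would compute, via the substitution $u=x/\xi$,
\begin{equation}
[\widetilde{f}(\cdot,\xi)]^*(x^*)
=\sup_{x\in\XX}\big(\pair{x}{x^*}-\xi f(x/\xi)\big)
=\xi\sup_{u\in\XX}\big(\pair{u}{x^*}-f(u)\big)
=\xi f^*(x^*),
\end{equation}
which matches $\xi\rocky f^*=\xi f^*$. When $\xi=0$, we have $\widetilde{f}(\cdot,0)=\rec f$ and $\xi\rocky f^*=\iota_{\cdom f^*}$, so the identity reduces to $(\rec f)^*=\iota_{\cdom f^*}$. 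Here I would invoke the recession--support duality $\rec f=\sigma_{\dom f^*}$, valid for $f\in\Gamma_0(\XX)$ on a reflexive Banach space; conjugating the support function of the convex set $\dom f^*$ then yields $(\rec f)^*=(\sigma_{\dom f^*})^*=\iota_{\cdom f^*}$. Having established the first identity together with $\widetilde{f}(\cdot,\xi)\in\Gamma_0(\XX)$, the second identity follows \emph{without a separate computation}: taking conjugates in $[\widetilde{f}(\cdot,\xi)]^*=\xi\rocky f^*$ and applying the Fenchel--Moreau theorem gives $\widetilde{f}(\cdot,\xi)=(\xi\rocky f^*)^*$, and replacing $f$ by $f^*$ while using $f^{**}=f$ produces $(\xi\rocky f)^*=\widetilde{f^*}(\cdot,\xi)$.

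Items~\ref{l:66iii} and~\ref{l:66iv} are then direct. For~\ref{l:66iii}, the case $\xi=0$ is the definition $\partial\iota_{\cdom f}=N_{\cdom f}$, while the case $\xi\in\RPP$ is the elementary scaling rule $\partial(\xi f)=\xi\,\partial f$ for a strictly positive scalar. For~\ref{l:66iv}, in the Hilbertian setting item~\ref{l:66i} guarantees that $\prox_{\xi\rocky f}$ is well defined; the case $\xi=0$ reads $\prox_{\iota_{\cdom f}}=\proj_{\cdom f}$ as recorded in Section~\ref{sec:2}, and the case $\xi\in\RPP$ is simply the identity $\xi\rocky f=\xi f$.

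I expect the only genuine obstacle to be the $\xi=0$ instance of~\ref{l:66ii}, that is, the identity $(\rec f)^*=\iota_{\cdom f^*}$: one must ensure that the recession--support duality is applied in its reflexive-Banach-space form and that $\dom f^*$, being convex, satisfies $\overline{\conv}(\dom f^*)=\cdom f^*$, so that conjugating the support function returns precisely the indicator of $\cdom f^*$. Everything else amounts to bookkeeping across the two regimes $\xi=0$ and $\xi\in\RPP$.
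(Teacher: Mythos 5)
Your proposal is correct, but it takes a genuinely different route from the paper: for items \ref{l:66i}--\ref{l:66ii} the paper's entire proof is a citation to Rockafellar's Theorem~3E in \cite{Rock66}, whereas you give a direct, self-contained argument. Your case analysis for $\xi\in\RPP$ (the substitution $u=x/\xi$ yielding $[\widetilde{f}(\cdot,\xi)]^*=\xi f^*$) and for $\xi=0$ (reducing the claim to $(\rec f)^*=\iota_{\cdom f^*}$ via the recession--support duality $\rec f=\sigma_{\dom f^*}$, then conjugating the support function of the convex set $\dom f^*$) is sound, and the Fenchel--Moreau bootstrap that derives the second conjugacy identity from the first by replacing $f$ with $f^*$ is a clean way to avoid a second computation. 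It is worth noting that the identity $\rec f=\sigma_{\dom f^*}$ you invoke is itself essentially the $\xi=0$ slice of the paper's Lemma~\ref{l:66b} (also drawn from \cite{Rock66}), so your argument makes explicit the content that the paper outsources to the reference; what your approach buys is transparency and independence from \cite{Rock66}, at the cost of having to verify that $\widetilde{f}(\cdot,\xi)\in\Gamma_0(\XX)$ in both regimes, which you correctly do. For items \ref{l:66iii}--\ref{l:66iv} your treatment coincides with the paper's (immediate from \eqref{e:66} and item \ref{l:66i}).
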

\begin{proof}
\ref{l:66i}--\ref{l:66ii}: See \cite[Theorem~3E]{Rock66}.

\ref{l:66iii}--\ref{l:66iv}: Clear from \ref{l:66i} and 
\eqref{e:66}.
\end{proof}

\begin{lemma}
\label{l:66b}
{\rm \cite[Theorem~3F]{Rock66}}
Let $\XX$ be a reflexive real Banach space, let
$f\in\Gamma_0(\XX)$, and set
$C=\menge{(x^*,\xi^*)\in\XX^*\oplus\RR}{f^*(x^*)+\xi^*\leq 0}$. 
Then the following hold:
\begin{enumerate}
\item
\label{l:66bi}
$\widetilde{f}=\sigma_C$.
\item
\label{l:66bii}
$\widetilde{f}\in\Gamma_0(\XX\oplus\RR)$.
\end{enumerate}
\end{lemma}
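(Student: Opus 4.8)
The plan is to establish~\ref{l:66bi} by directly computing $\sigma_C$, after which~\ref{l:66bii} will follow at once. First I would record two structural observations about $C$: the defining inequality $f^*(x^*)+\xi^*\leq 0$ forces $x^*\in\dom f^*$ and bounds $\xi^*$ above by $-f^*(x^*)$, so $C$ is a reflection of $\epi f^*$ in its last coordinate; and since $f\in\Gamma_0(\XX)$ gives $\dom f^*\neq\emp$, the set $C$ is nonempty. Starting from
\begin{equation}
\sigma_C(x,\xi)=\sup_{\substack{(x^*,\xi^*)\in\XX^*\oplus\RR\\ f^*(x^*)+\xi^*\leq 0}}
\big(\pair{x}{x^*}+\xi\xi^*\big),
\end{equation}
I would then split the analysis according to the sign of $\xi$.

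For $\xi>0$, carrying out the inner supremum over $\xi^*\leq-f^*(x^*)$ first selects $\xi^*=-f^*(x^*)$, since the coefficient $\xi$ is positive, so the computation reads
\begin{equation}
\sigma_C(x,\xi)=\sup_{x^*\in\dom f^*}\big(\pair{x}{x^*}-\xi f^*(x^*)\big)
=\xi\sup_{x^*\in\XX^*}\big(\pair{x/\xi}{x^*}-f^*(x^*)\big)=\xi f^{**}(x/\xi).
\end{equation}
Because $f\in\Gamma_0(\XX)$ and $\XX$ is reflexive, the Fenchel--Moreau theorem gives $f^{**}=f$, whence $\sigma_C(x,\xi)=\xi f(x/\xi)=\widetilde{f}(x,\xi)$. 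For $\xi=0$ the $\xi^*$-term vanishes and the feasible first coordinates are exactly those in $\dom f^*$, so $\sigma_C(x,0)=\sigma_{\dom f^*}(x)$; the standard identity $\rec f=\sigma_{\dom f^*}$ for $f\in\Gamma_0(\XX)$ (see \cite{Livre1,Zali02}) then yields $\sigma_C(x,0)=(\rec f)(x)=\widetilde{f}(x,0)$. Finally, for $\xi<0$ the constraint leaves $\xi^*$ free to tend to $\minf$, so $\xi\xi^*\to\pinf$ and $\sigma_C(x,\xi)=\pinf=\widetilde{f}(x,\xi)$. Comparing the three cases with~\eqref{e:pers} proves $\widetilde{f}=\sigma_C$.

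For~\ref{l:66bii} I would invoke the fact that the support function of any nonempty subset of $\XX^*\oplus\RR$ is a pointwise supremum of continuous affine forms, hence convex and lower semicontinuous and nowhere equal to $\minf$; and since $C\neq\emp$ we have $\sigma_C(0,0)=0$, so $\sigma_C$ is proper. Therefore $\widetilde{f}=\sigma_C\in\Gamma_0(\XX\oplus\RR)$.

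The three sign cases are routine, amounting to an iterated supremum and the reading-off of a biconjugate. The two genuine inputs are the biconjugation identity $f^{**}=f$ (which is where reflexivity is used) in the case $\xi>0$, and the representation $\rec f=\sigma_{\dom f^*}$ in the case $\xi=0$; I expect the latter to be the delicate point, as it is precisely where the recession branch of the perspective must be matched to the asymptotic behaviour of $C$, and its justification in the reflexive Banach setting rests on conjugacy rather than on the compactness arguments available in $\RR^N$.
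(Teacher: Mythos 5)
Your argument is correct, but it is worth noting that the paper does not actually prove this lemma: it is stated purely as a citation of Rockafellar's Theorem~3F in \cite{Rock66}, so there is no internal proof to compare against. What you supply is a self-contained verification of that cited result, and it is sound: the observation that $C$ is the reflection of $\epi f^*$ in its last coordinate, the case split on the sign of $\xi$, the iterated supremum selecting $\xi^*=-f^*(x^*)$ when $\xi>0$ followed by Fenchel--Moreau to get $\xi f^{**}(x/\xi)=\xi f(x/\xi)$, the identification $\sigma_C(\cdot,0)=\sigma_{\dom f^*}=\rec f$, and the blow-up for $\xi<0$ all check out against the definition \eqref{e:pers}, as does the deduction of \ref{l:66bii} from $C\neq\emp$ and the fact that support functions of nonempty sets lie in $\Gamma_0$. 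You correctly isolate the one genuinely nontrivial input, namely $\rec f=\sigma_{\dom f^*}$ for $f\in\Gamma_0(\XX)$; this is a standard conjugacy fact (see, e.g., \cite{Zali02}) valid in the reflexive Banach setting, and since support functions do not distinguish a convex set from its closed convex hull, no closure subtleties arise. The trade-off is the obvious one: the paper buys brevity by outsourcing the statement to \cite{Rock66}, while your computation makes the lemma self-contained at the cost of importing the recession-function identity as an external fact in its own right.
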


\begin{lemma}
\label{l:6}
Let $\XX$ be a reflexive real Banach space, let
$f\colon\XX\to\RX$ be proper, let $x\in\XX$, and let 
$x^*\in\XX^*$. Then the following hold:
\begin{enumerate}
\item
\label{l:6i}
$f(x)+f^*(x^*)\geq\pair{x}{x^*}$.
{\rm (\cite[Theorem~2.3.1(ii)]{Zali02})}
\item
\label{l:6ii}
$f(x)+f^*(x^*)=\pair{x}{x^*}$ $\Leftrightarrow$
$(x,x^*)\in\gra\partial f$.
{\rm (\cite[Theorem~2.4.2(iii)]{Zali02})}
\end{enumerate}
\end{lemma}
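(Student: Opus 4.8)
The plan is to derive both assertions directly from the definition of the conjugate $f^*\colon x^*\mapsto\sup_{y\in\XX}(\pair{y}{x^*}-f(y))$ and of the subdifferential \eqref{e:subdiff}, without invoking any separation or topological argument; the reflexivity and Banach structure of $\XX$ play no role here. First I would establish \ref{l:6i}, the Fenchel--Young inequality, which is immediate: for the specific point $x$, the supremum defining $f^*(x^*)$ dominates the single term $\pair{x}{x^*}-f(x)$, so that $f^*(x^*)\geq\pair{x}{x^*}-f(x)$, and rearranging yields the claim. The only point requiring care is the extended-real arithmetic: since $f$ is proper it never takes the value $\minf$, and likewise $f^*$ never takes $\minf$, so the sum $f(x)+f^*(x^*)$ is well defined in $\RX$ and the asserted inequality holds trivially whenever either term equals $\pinf$.

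For \ref{l:6ii} I would prove the two implications separately. For the reverse implication, suppose $x^*\in\partial f(x)$. By \eqref{e:subdiff} this forces $x\in\dom f$ and
\begin{equation}
(\forall y\in\XX)\quad f(y)\geq f(x)+\pair{y-x}{x^*},
\end{equation}
where the inequality extends from $\dom f$ to all of $\XX$ because it is vacuous when $f(y)=\pinf$ (the right-hand side being finite as $x\in\dom f$). Rewriting this as $\pair{y}{x^*}-f(y)\leq\pair{x}{x^*}-f(x)$ and taking the supremum over $y\in\XX$ gives $f^*(x^*)\leq\pair{x}{x^*}-f(x)$, that is, $f(x)+f^*(x^*)\leq\pair{x}{x^*}$; combined with \ref{l:6i} this produces the desired equality.

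For the forward implication, suppose $f(x)+f^*(x^*)=\pair{x}{x^*}$. Since the right-hand side is finite and neither $f$ nor $f^*$ attains $\minf$, both $f(x)$ and $f^*(x^*)$ are finite, so in particular $x\in\dom f$. Now for every $y\in\XX$ the inequality $f^*(x^*)\geq\pair{y}{x^*}-f(y)$ holds by definition of the conjugate; substituting $f^*(x^*)=\pair{x}{x^*}-f(x)$ and rearranging yields $f(y)\geq f(x)+\pair{y-x}{x^*}$, which is precisely the condition in \eqref{e:subdiff} characterizing $x^*\in\partial f(x)$.

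I do not anticipate a genuine obstacle: the whole argument is a direct unwinding of the definitions of $f^*$ and $\partial f$, and the subtlest point is merely the bookkeeping that properness of $f$ guarantees no $\pinf-\pinf$ ambiguity arises in the sum $f(x)+f^*(x^*)$. Accordingly, I would expect the lemma to be cited rather than reproved in detail, as both parts are standard (cf. the references to \cite{Zali02} already indicated in the statement).
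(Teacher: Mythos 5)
Your proof is correct and entirely standard; the paper itself gives no proof of this lemma, citing \cite[Theorems~2.3.1(ii) and~2.4.2(iii)]{Zali02} instead, exactly as you anticipated at the end of your proposal. The direct unwinding of the definitions of $f^*$ and $\partial f$, together with the observation that properness of $f$ (and hence the fact that neither $f$ nor $f^*$ takes the value $\minf$) rules out any $\pinf-\pinf$ ambiguity, is the canonical argument and matches the cited source.
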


\begin{lemma}
\label{l:2}
Let $\XX$ be a reflexive real Banach space and let 
$f\in\Gamma_0(\XX)$. Then the following hold:
\begin{enumerate}
\item
\label{l:2i}
$f^*\in\Gamma_0(\XX^*)$ and $f=f^{**}$. 
{\rm (\cite[Theorem~2.3.3]{Zali02})}
\item
\label{l:2ii}
$(\partial f)^{-1}=\partial f^*$. 
{\rm (\cite[Theorem~2.4.4(iv)]{Zali02})}
\item
\label{l:2iii}
$\partial f$ is maximally monotone.
{\rm (\cite[Theorem~3.2.8]{Zali02})}
\item
\label{l:2iv}
$\Argmin f=\zer\partial f$.
{\rm (\cite[Theorem~2.5.7]{Zali02})}
\end{enumerate}
\end{lemma}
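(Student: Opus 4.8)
Since the four assertions are classical facts of convex analysis on reflexive Banach spaces and each is recorded with a precise reference in the statement itself, the plan is to indicate the shortest route to each rather than to reconstruct the underlying theory from scratch.

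For \ref{l:2i} I would invoke the Fenchel--Moreau biconjugation theorem. Convexity and lower semicontinuity of $f^*$ are automatic, as $f^*$ is a pointwise supremum of continuous affine functions; properness of $f^*$ follows from the existence of a continuous affine minorant of $f$, obtained by Hahn--Banach separation of a point lying strictly below $\epi f$ from this nonempty closed convex set. The identity $f=f^{**}$ then results from the inequality $f^{**}\leq f$, which is a direct consequence of the Fenchel--Young inequality of Lemma~\ref{l:6}\ref{l:6i}, together with the reverse inequality, again produced by separating points below $\epi f$ and passing to the supremum over the resulting affine minorants.

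Assertion \ref{l:2ii} is then an immediate corollary of \ref{l:2i} and Lemma~\ref{l:6}\ref{l:6ii}: since $f=f^{**}$, the Fenchel equality $f(x)+f^*(x^*)=\pair{x}{x^*}$ that characterizes $x^*\in\partial f(x)$ is symmetric in $(x,x^*)$ once reflexivity is used to identify $\XX^{**}$ with $\XX$, so it equivalently characterizes $x\in\partial f^*(x^*)$, whence $(\partial f)^{-1}=\partial f^*$. Likewise, \ref{l:2iv} follows at once from Fermat's rule: setting $x^*=0$ in the definition \eqref{e:subdiff} shows that $0\in\partial f(x)$ is exactly the condition $(\forall y\in\dom f)\;f(x)\leq f(y)$, that is, $x\in\Argmin f$.

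The main obstacle is \ref{l:2iii}. Monotonicity of $\partial f$ is elementary: adding the two subgradient inequalities associated with $(x,x^*)$ and $(y,y^*)$ in $\gra\partial f$ yields $\pair{x-y}{x^*-y^*}\geq 0$. Maximality, however, is the deep ingredient---this is Rockafellar's theorem, whose proof in the reflexive Banach space setting rests on a nontrivial variational argument, for instance solving an auxiliary inclusion built from $\partial f$ and a duality mapping by minimizing a suitable coercive convex function. Rather than reproduce this machinery, I would cite \cite{Zali02} for \ref{l:2iii} and treat maximality as a black box, since it is orthogonal to the developments of the present paper.
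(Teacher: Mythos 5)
Your proposal is correct: the paper offers no proof of this lemma at all, simply attaching the standard references from Z\u{a}linescu's monograph to each item, and your sketches of the classical arguments (Fenchel--Moreau for \ref{l:2i}, symmetry of the Fenchel equality under $f=f^{**}$ for \ref{l:2ii}, Fermat's rule for \ref{l:2iv}, and deferring Rockafellar's maximal monotonicity theorem to the citation) are all accurate and consistent with that treatment. Nothing further is needed.
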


\begin{lemma}
\label{l:1}
Let $\XX$ be a reflexive real Banach space, let
$f\in\Gamma_0(\XX)$, and let $g\in\Gamma_0(\XX)$. Suppose that
$0\in\sri(\dom f-\dom g)$. Then the following hold:
\begin{enumerate}
\item
\label{l:1i} 
$(f+g)^*=f^*\infconv g^*\in\Gamma_0(\XX^*)$. 
{\rm (\cite[Theorem~1.1]{Atto86})}
\item
\label{l:1ii} 
$\partial (f+g)=\partial f+\partial g$. 
{\rm (\cite[Corollary~2.1]{Atto86})}
\end{enumerate}
\end{lemma}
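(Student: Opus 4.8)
Statement (ii) will follow from (i) by the Fenchel--Young apparatus of Lemma~\ref{l:6}, and the membership $f^*\infconv g^*\in\Gamma_0(\XX^*)$ is automatic once the identity in (i) is secured: the hypothesis $0\in\sri(\dom f-\dom g)$ forces $0\in\dom f-\dom g$, hence $\dom f\cap\dom g\neq\emp$, so that $f+g\in\Gamma_0(\XX)$ and $(f+g)^*\in\Gamma_0(\XX^*)$ by Lemma~\ref{l:2}\ref{l:2i}. Thus the whole weight of the lemma rests on the conjugation formula, and within it on the \emph{exactness} of the infimal convolution.

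Plan for (i). One inequality is free: for all $x^*$ and $y^*$ in $\XX^*$, applying the Fenchel--Young inequality (Lemma~\ref{l:6}\ref{l:6i}) to $f$ at $y^*$ and to $g$ at $x^*-y^*$ and adding yields $f^*(y^*)+g^*(x^*-y^*)\geq(f+g)^*(x^*)$, whence $(f+g)^*\leq f^*\infconv g^*$ pointwise. Everything else -- the reverse inequality \emph{and} attainment of the defining infimum -- I would obtain by Fenchel duality. Fixing $x^*\in\XX^*$ and writing $(f+g)^*(x^*)=-\inf_{x\in\XX}\big(f(x)-\pair{x}{x^*}+g(x)\big)$, I introduce the value function
\[
V\colon\XX\to\RXX\colon u\mapsto\inf_{x\in\XX}\big(f(x)-\pair{x}{x^*}+g(x+u)\big),
\]
which is convex with $\dom V=\dom g-\dom f$, so that $0\in\sri(\dom V)$. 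A routine conjugate computation gives $V^*(y^*)=f^*(x^*-y^*)+g^*(y^*)$ and $V^{**}(0)=-(f^*\infconv g^*)(x^*)$, while $V(0)=-(f+g)^*(x^*)$. Consequently the desired equality together with attainment is \emph{equivalent} to $\partial V(0)\neq\emp$: any $y^*\in\partial V(0)$ satisfies $f^*(x^*-y^*)+g^*(y^*)=(f+g)^*(x^*)$, exhibiting both the formula and an exact splitting.

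The main obstacle is exactly this subdifferentiability of $V$ at $0$. Because $V$ need not be lower semicontinuous, I cannot invoke the classical interior-point (Moreau--Rockafellar) criterion, and this is where reflexivity and the $\sri$ hypothesis must be used \emph{jointly}; equivalently, it suffices to prove that the sum of epigraphs $\epi f^*+\epi g^*\subseteq\XX^*\times\RR$ -- always contained in $\epi(f^*\infconv g^*)$, and whose closedness governs both lower semicontinuity and exactness of $f^*\infconv g^*$ -- is closed. The plan is: (a) pass to the closed subspace $Y=\cone(\dom f-\dom g)$ provided by $0\in\sri(\dom f-\dom g)$, relative to which $0$ is a core (absorbing) point of $\dom f-\dom g$; (b) take $(x^*_n,\rho_n)=(y^*_n+z^*_n,\mu_n+\nu_n)$ in $\epi f^*+\epi g^*$ converging to $(x^*,\rho)$ and use the core property along $Y$ to control the splitting components; (c) extract, by reflexivity, a weakly convergent subsequence $y^*_n\weakly y^*$, whence $z^*_n\weakly x^*-y^*$; (d) pass to the limit using the weak lower semicontinuity of $f^*$ and $g^*$ (Lemma~\ref{l:2}\ref{l:2i}), placing $(x^*,\rho)$ in $\epi f^*+\epi g^*$. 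Step~(b), the coercive control of the components forced by the closed-subspace structure of $\cone(\dom f-\dom g)$, is the genuine difficulty; this is the heart of the Attouch--Br\'ezis argument and the one place where neither reflexivity nor the qualification condition can be dispensed with.

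Finally, (ii). The inclusion $\partial f+\partial g\subseteq\partial(f+g)$ holds unconditionally by adding the two subgradient inequalities. For the reverse, let $x^*\in\partial(f+g)(x)$. By Lemma~\ref{l:6}\ref{l:6ii}, $(f+g)(x)+(f+g)^*(x^*)=\pair{x}{x^*}$, and by~(i) there is $y^*\in\XX^*$ with $(f+g)^*(x^*)=f^*(y^*)+g^*(x^*-y^*)$. Substituting and regrouping,
\[
\big(f(x)+f^*(y^*)-\pair{x}{y^*}\big)+\big(g(x)+g^*(x^*-y^*)-\pair{x}{x^*-y^*}\big)=0,
\]
where each bracket is nonnegative by Lemma~\ref{l:6}\ref{l:6i}; hence both vanish, and Lemma~\ref{l:6}\ref{l:6ii} gives $y^*\in\partial f(x)$ and $x^*-y^*\in\partial g(x)$, so that $x^*\in\partial f(x)+\partial g(x)$. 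This completes the argument modulo the closedness step~(b), which I expect to be the real work.
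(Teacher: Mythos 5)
The paper offers no proof of this lemma: both parts are quoted directly from Attouch--Br\'ezis \cite{Atto86}, so what you have written is an outline of the proof of that cited theorem rather than an alternative to anything in the paper. Your surrounding architecture is sound and matches the classical argument: the inequality $(f+g)^*\leq f^*\infconv g^*$ via Fenchel--Young, the reduction of the reverse inequality with exactness to the closedness of $\epi f^*+\epi g^*$ (equivalently, to nonemptiness of $\partial V(0)$), and the derivation of \ref{l:1ii} from \ref{l:1i} by splitting the Fenchel--Young equality into two nonnegative brackets are all correct and complete as written.

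However, there is a genuine gap, and you have located it yourself: step~(b) is not a technical detail to be filled in later, it \emph{is} the theorem. Without a uniform bound on the components $y^*_n$ of a convergent sequence in $\epi f^*+\epi g^*$, step~(c) cannot start --- reflexivity yields weakly convergent subsequences only from \emph{bounded} sequences. The missing ingredient in \cite{Atto86} is a Baire-category (uniform boundedness) argument on the closed subspace $Y=\cone(\dom f-\dom g)$: one shows that $Y$ is covered by the sets $n\big(\{f\leq n\}\cap nB_\XX-\{g\leq n\}\cap nB_\XX\big)$, $n\in\NN$, so that some (hence, by convexity and symmetry, every small) ball of $Y$ admits a decomposition $y=u-v$ with $f(u)$, $g(v)$, $\|u\|$, $\|v\|$ uniformly controlled; pairing such $y$ against $y^*_n$ and using $f^*(y^*_n)\leq\mu_n$, $g^*(z^*_n)\leq\nu_n$ then bounds $\sup\,\pair{B_Y}{y^*_n}$, i.e., the norms of the restrictions $y^*_n|_Y$, after which one must still correct by elements of the annihilator $Y^\perp$ before extracting the weak limit. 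None of this is routine, and it is exactly where both the reflexivity of $\XX$ and the closed-subspace content of the $\sri$ hypothesis enter. As it stands, your proposal establishes everything except the one step that cannot be taken for granted, so it does not constitute a proof of the lemma.
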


\begin{lemma}
\label{l:4}
Let $\XX$ be a reflexive real Banach space, let 
$f\in\Gamma_0(\XX)$, and let $g\in\Gamma_0(\XX)$. 
Suppose that $0\in\sri(\dom f^*-\dom g^*)$. Then 
$(\partial f)\infconv(\partial g)=\partial(f\infconv g)$.
\end{lemma}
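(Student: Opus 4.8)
The plan is to transfer everything to the conjugate functions, apply the sum rules of Lemma~\ref{l:1} to them in the dual space, and then invert. Set $p=f^*$ and $q=g^*$, which belong to $\Gamma_0(\XX^*)$ by Lemma~\ref{l:2}\ref{l:2i}. The hypothesis reads precisely $0\in\sri(\dom p-\dom q)$, so Lemma~\ref{l:1} is available for the pair $(p,q)$ in $\XX^*$.

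First I would establish that $f\infconv g\in\Gamma_0(\XX)$, since this is what licenses subdifferential conjugation on it. By Lemma~\ref{l:1}\ref{l:1i} applied to $(p,q)$, one has $(p+q)^*=p^*\infconv q^*\in\Gamma_0(\XX^{**})$. Because $\XX$ is reflexive, $\XX^{**}=\XX$ and, by Lemma~\ref{l:2}\ref{l:2i}, $p^*=f^{**}=f$ and $q^*=g^{**}=g$; hence $f\infconv g=(p+q)^*\in\Gamma_0(\XX)$. In particular $(f\infconv g)^*=(p+q)^{**}=p+q$, the last equality holding because $p+q\in\Gamma_0(\XX^*)$.

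Next I would compute the subdifferential by passing through the conjugate. Since $f\infconv g\in\Gamma_0(\XX)$, Lemma~\ref{l:2}\ref{l:2ii} gives $\partial(f\infconv g)=[\partial((f\infconv g)^*)]^{-1}=[\partial(p+q)]^{-1}$. The sum rule of Lemma~\ref{l:1}\ref{l:1ii}, again applied to $(p,q)$, yields $\partial(p+q)=\partial p+\partial q=\partial f^*+\partial g^*$. Finally, Lemma~\ref{l:2}\ref{l:2ii} rewrites $\partial f^*=(\partial f)^{-1}$ and $\partial g^*=(\partial g)^{-1}$, so that $\partial(f\infconv g)=[(\partial f)^{-1}+(\partial g)^{-1}]^{-1}$, which is exactly $(\partial f)\infconv(\partial g)$ by the definition \eqref{e:94} of the parallel sum.

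The only genuinely delicate point is the first step: without knowing a priori that $f\infconv g$ is proper, convex, and lower semicontinuous, one cannot legitimately invoke $(\partial h)^{-1}=\partial h^*$ for $h=f\infconv g$. The hypothesis $0\in\sri(\dom f^*-\dom g^*)$ is precisely what guarantees, via Lemma~\ref{l:1}\ref{l:1i} together with reflexivity, that $f\infconv g$ is the conjugate of $f^*+g^*$ and therefore lies in $\Gamma_0(\XX)$; everything after that is formal manipulation of inverses, and I expect no further obstacle.
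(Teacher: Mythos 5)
Your proof is correct and follows essentially the same route as the paper's: both reduce the statement to the sum rules of Lemma~\ref{l:1} applied to $f^*$ and $g^*$ and then invert via Lemma~\ref{l:2}\ref{l:2ii} and the definition \eqref{e:94} of the parallel sum; you simply run the chain of identities in the opposite direction and make explicit the (correct) observation that $f\infconv g=(f^*+g^*)^*\in\Gamma_0(\XX)$, which the paper leaves implicit.
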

\begin{proof}
It follows from \eqref{e:94}, Lemma~\ref{l:2}, and Lemma~\ref{l:1}
applied to $f^*$ and $g^*$ 
that $(\partial f)\infconv(\partial g) 
=((\partial f)^{-1}+(\partial g)^{-1})^{-1}
=(\partial f^*+\partial g^*)^{-1}
=(\partial(f^*+g^*))^{-1}
=\partial(f^*+g^*)^*
=\partial(f^{**}\infconv g^{**})
=\partial(f\infconv g)$. 
\end{proof}

\begin{lemma}
\label{l:3}
Let $\phi\colon\RR\to\RX$ be an increasing proper convex function.
Then the following hold:
\begin{enumerate}
\item
\label{l:3i}
$\dom\phi$ is an interval and $\inf\dom\phi=\minf$.
\item
\label{l:3ii}
$\dom\phi^*\subset\RP$.
\item 
\label{l:3iii}
Suppose that $\phi$ is lower semicontinuous and not constant. Then
$\dom\phi^*\cap\RPP\neq\emp$.
\end{enumerate}
\end{lemma}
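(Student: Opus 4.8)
The plan is to handle the three items in order, the first two directly from the definition of the domain and the conjugate, and the third by a biconjugation argument.

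For \ref{l:3i}, convexity of $\phi$ makes $\dom\phi$ a convex subset of $\RR$, hence an interval. For $\inf\dom\phi=\minf$, I would fix some $\xi_0\in\dom\phi$, which exists because $\phi$ is proper; since $\phi$ is increasing, every $\xi\leq\xi_0$ satisfies $\phi(\xi)\leq\phi(\xi_0)<\pinf$, so $\left]\minf,\xi_0\right]\subset\dom\phi$ and therefore $\inf\dom\phi=\minf$.

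For \ref{l:3ii}, I would show that every $\xi^*<0$ lies outside $\dom\phi^*$. Fixing $\xi_0\in\dom\phi$ and using monotonicity, $\phi(\xi)\leq\phi(\xi_0)$ for $\xi\leq\xi_0$, so $\phi^*(\xi^*)\geq\sup_{\xi\leq\xi_0}(\xi\xi^*-\phi(\xi_0))$. Since $\xi^*<0$, letting $\xi\to\minf$ sends $\xi\xi^*\to\pinf$, whence $\phi^*(\xi^*)=\pinf$. Thus $\dom\phi^*\subset\RP$.

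For \ref{l:3iii}, the decisive idea — and the only place where lower semicontinuity and non-constancy enter — is an argument by contradiction through biconjugation. By Lemma~\ref{l:2}\ref{l:2i}, $\phi^*\in\Gamma_0(\RR)$ is proper, so $\dom\phi^*\neq\emp$, and by \ref{l:3ii} we have $\dom\phi^*\subset\RP$. Suppose $\dom\phi^*\cap\RPP=\emp$; then $\dom\phi^*=\{0\}$. Using $\phi=\phi^{**}$ (again Lemma~\ref{l:2}\ref{l:2i}) and the fact that points outside $\dom\phi^*$ contribute $\minf$ to the supremum defining $\phi^{**}$, I obtain, for every $\xi\in\RR$,
\[
\phi(\xi)=\sup_{\xi^*\in\dom\phi^*}\big(\xi\xi^*-\phi^*(\xi^*)\big)=-\phi^*(0),
\]
a constant independent of $\xi$, contradicting the hypothesis that $\phi$ is not constant. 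Hence $\dom\phi^*\cap\RPP\neq\emp$.

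The subtle step is \ref{l:3iii}: the temptation is to try to exhibit a point with positive slope directly, which forces one to track where $\partial\phi$ is nonempty and to control boundary behavior of $\dom\phi$. Invoking $\phi=\phi^{**}$ instead converts the statement ``$\dom\phi^*$ contains no positive point'' cleanly into ``$\phi$ is constant,'' sidestepping those complications. Items \ref{l:3i} and \ref{l:3ii} are routine.
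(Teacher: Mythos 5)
Your proposal is correct and follows essentially the same route as the paper: item \ref{l:3i} via convexity and monotonicity, item \ref{l:3ii} by sending $\xi\to\minf$ in the conjugate supremum, and item \ref{l:3iii} by observing that $\dom\phi^*=\{0\}$ would force $\phi=\phi^{**}\equiv-\phi^*(0)$ to be constant. The only cosmetic difference is that you spell out the properness of $\phi^*$ explicitly, which the paper leaves implicit.
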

\begin{proof}
\ref{l:3i}: Since $\phi$ is convex, $\dom\phi$ is convex, and
hence an interval. Now take $\xi\in\dom\phi$ and
$\eta\in\left]\minf,\xi\right[$. Then
$\phi(\eta)\leq\phi(\xi)<\pinf$ and therefore $\eta\in\dom\phi$.
Consequently, $\inf\dom\phi=\minf$.

\ref{l:3ii}:
Since $\phi$ is increasing, it follows from \ref{l:3i} that
\begin{equation}
(\forall\xi^*\in\RMM)\quad\phi^*(\xi^*)
=\sup_{\xi\in\dom\phi}\big(\xi\xi^*-\phi(\xi)\big)=\pinf,
\end{equation}
which yields $\dom\phi^*\subset\RP$.

\ref{l:3iii}: If $\dom\phi^*=\{0\}$, then
$\phi^*=\iota_{\{0\}}+\phi^*(0)$ and hence, since
$\phi\in\Gamma_0(\RR)$,
$\phi=\phi^{**}\equiv-\phi^*(0)$ is constant. Therefore, the claim
follows from \ref{l:3ii}.
\end{proof}

The next result discusses convex analytical properties of nonlinear
compositions. It provides in particular a connection between the
conjugate of $\phi\circ f$ and the marginal of a function that
involves the perspective of $f$ (see \eqref{e:pers}). 

\begin{proposition}
\label{p:1}
Let $\XX$ be a reflexive real Banach space, let
$\phi\in\Gamma_0(\RR)$ be a nonconstant increasing function, and
let $f\in\Gamma_0(\XX)$ be such that 
$(\dom\phi)\cap f(\dom f)\neq\emp$. Then the following hold: 
\begin{enumerate}
\item 
\label{p:1i}
$\dom(\phi\circ f)=f^{-1}(\dom\phi)$.
\item 
\label{p:1ii}
$\phi\circ f\in\Gamma_0(\XX)$.
\item 
\label{p:1iv}
Suppose that there exists $z\in \XX$ such that 
$f(z)\in\inte\dom\phi$, and let $x^*\in\XX^*$. Then 
\begin{equation}
\label{e:thib94}
(\phi\circ f)^*(x^*)
=\min_{\xi^*\in\RR}\big(\phi^*(\xi^*)+
\widetilde{f^*}(x^*,\xi^*)\big).
\end{equation}
\item 
\label{p:1iii-}
Let $x\in\dom f$. Then 
\begin{equation}
\bigcup_{\xi^*\in\partial\phi(f(x))}\partial(\xi^*\rocky f)(x)
\subset\partial(\phi\circ f)(x). 
\end{equation}
\item 
\label{p:1iii}
Suppose that there exists $z\in\dom f$ such that 
$f(z)\in\inte\dom\phi$, and let $x\in\dom f$. Then
\begin{equation}
\partial(\phi\circ f)(x)= 
\bigcup_{\xi^*\in\partial\phi(f(x))}\partial(\xi^*\rocky f)(x).
\end{equation}
\item
\label{p:1v-}
Suppose that there exists $z\in\dom f$ such that 
$f(z)\in\inte\dom\phi$, let $x\in\dom f$, let $x^*\in\XX^*$, and
let $\xi^*\in\RR$. Then the following are equivalent:
\begin{enumerate}
\item
\label{p:1v-a}
$(\phi\circ f)(x)+\phi^*(\xi^*)=(\xi^*\rocky f)(x)$\;\; and\;\;
$(\xi^*\rocky f)(x)+(\xi^*\rocky f)^*(x^*)=\pair{x}{x^*}$.
\item
\label{p:1v-b}
$(\phi\circ f)(x)+\phi^*(\xi^*)+(\xi^*\rocky f)^*(x^*)=
\pair{x}{x^*}$.
\item
\label{p:1v-c}
$(\phi\circ f)(x)+(\phi\circ f)^*(x^*)=\pair{x}{x^*}$\;\;and\;\;
$\phi^*(\xi^*)+(\xi^*\rocky f)^*(x^*)=(\phi\circ f)^*(x^*)$.
\end{enumerate}
\item
\label{p:1v}
Suppose that there exists $z\in\dom f$ such that 
$f(z)\in\inte\dom\phi$, let $x\in\dom f$, let $x^*\in\XX^*$, and
let $\xi^*\in\RR$. Then 
\begin{equation}
\label{e:burk}
\begin{cases}
\xi^*\in\partial\phi\big(f(x)\big)\\
x^*\in\partial(\xi^*\rocky f)(x)
\end{cases}
\Leftrightarrow\quad 
\begin{cases}
x^*\in\partial (\phi\circ f)(x)\\
\xi^*\in\Argmin\big(\phi^*+\widetilde{f^*}(x^*,\cdot)\big).
\end{cases}
\end{equation}
\end{enumerate}
\end{proposition}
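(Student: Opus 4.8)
The plan is to reduce each of the two systems to the Fenchel--Young equalities occurring in part~\ref{p:1v-}, and then to invoke the equivalence \ref{p:1v-a}$\Leftrightarrow$\ref{p:1v-c} already established there. The engine of the whole argument is the characterization of the subdifferential by equality in the Fenchel--Young inequality, i.e., Lemma~\ref{l:6}\ref{l:6ii}.

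First I would rewrite the left-hand system. Applying Lemma~\ref{l:6}\ref{l:6ii} to $\phi$ at the point $f(x)$, the inclusion $\xi^*\in\partial\phi(f(x))$ is equivalent to $\phi(f(x))+\phi^*(\xi^*)=\xi^*f(x)$. Since this inclusion forces $f(x)\in\dom\phi$, we have $\phi(f(x))=(\phi\circ f)(x)$, while Lemma~\ref{l:3}\ref{l:3ii} gives $\xi^*\in\dom\phi^*\subset\RP$, so that $\xi^*\rocky f$ is well defined and $\xi^*f(x)=(\xi^*\rocky f)(x)$ (checking $\xi^*=0$ and $\xi^*>0$ separately, using $x\in\dom f$); this produces the first equality of \ref{p:1v-a}. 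Likewise, since $\xi^*\rocky f\in\Gamma_0(\XX)$ by Lemma~\ref{l:66}\ref{l:66i}, Lemma~\ref{l:6}\ref{l:6ii} turns $x^*\in\partial(\xi^*\rocky f)(x)$ into $(\xi^*\rocky f)(x)+(\xi^*\rocky f)^*(x^*)=\pair{x}{x^*}$, the second equality of \ref{p:1v-a}. Hence the left-hand system is exactly condition \ref{p:1v-a}.

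Next I would treat the right-hand system the same way. Because $\phi\circ f\in\Gamma_0(\XX)$ by~\ref{p:1ii}, Lemma~\ref{l:6}\ref{l:6ii} shows that $x^*\in\partial(\phi\circ f)(x)$ is equivalent to the first equality of \ref{p:1v-c}. For the second condition, the conjugate formula \ref{p:1iv} identifies the minimal value of $\phi^*+\widetilde{f^*}(x^*,\cdot)$ with $(\phi\circ f)^*(x^*)$, so that $\xi^*\in\Argmin(\phi^*+\widetilde{f^*}(x^*,\cdot))$ holds if and only if $\phi^*(\xi^*)+\widetilde{f^*}(x^*,\xi^*)=(\phi\circ f)^*(x^*)$; substituting $\widetilde{f^*}(x^*,\xi^*)=(\xi^*\rocky f)^*(x^*)$ via the identity $(\xi^*\rocky f)^*=\widetilde{f^*}(\cdot,\xi^*)$ of Lemma~\ref{l:66}\ref{l:66ii} yields precisely the second equality of \ref{p:1v-c}. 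Thus the right-hand system is exactly condition \ref{p:1v-c}, and the equivalence \ref{p:1v-a}$\Leftrightarrow$\ref{p:1v-c} from~\ref{p:1v-} closes the argument.

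The one delicate point, rather than a genuine obstacle, is the bookkeeping around the nonnegativity of $\xi^*$ and the degenerate value $\xi^*=0$: the symbol $\xi^*\rocky f$ is defined only for $\xi^*\in\RP$, so one must verify that both systems force $\xi^*\geq 0$ before appealing to~\ref{p:1v-}. This is automatic on each side---via $\dom\phi^*\subset\RP$ (Lemma~\ref{l:3}\ref{l:3ii}) together with the finiteness of $\phi^*(\xi^*)$ imposed by the equalities, and via the fact that any minimizer of $\phi^*+\widetilde{f^*}(x^*,\cdot)$ must lie in $\dom\phi^*$ (the minimal value $(\phi\circ f)^*(x^*)$ being finite). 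Once this is secured, the identification $\xi^*f(x)=(\xi^*\rocky f)(x)$ and the conjugate identity of Lemma~\ref{l:66}\ref{l:66ii} hold verbatim, and the proof is complete.
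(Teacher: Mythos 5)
Your proposal establishes only item \ref{p:1v}, taking items \ref{p:1i}--\ref{p:1v-} for granted, so as a proof of the full proposition it is incomplete; but for item \ref{p:1v} it is correct and follows essentially the same route as the paper: both arguments use Lemma~\ref{l:6}\ref{l:6ii} to convert the two systems in \eqref{e:burk} into the Fenchel--Young equalities of conditions \ref{p:1v-a} and \ref{p:1v-c} (invoking item \ref{p:1iv} and Lemma~\ref{l:66}\ref{l:66ii} to rewrite the Argmin condition) and then conclude via the equivalence \ref{p:1v-a}$\Leftrightarrow$\ref{p:1v-c} of item \ref{p:1v-}. Your explicit bookkeeping of $\xi^*\in\dom\phi^*\subset\RP$ via Lemma~\ref{l:3}\ref{l:3ii}, which legitimizes the identification $\xi^*f(x)=(\xi^*\rocky f)(x)$, is the same step the paper performs implicitly.
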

\begin{proof}
\ref{p:1i}: See \eqref{e:c1}.

\ref{p:1ii}: 
In view of \eqref{e:c1},
convexity is established as in \cite[Proposition~8.21]{Livre1}. 
Since $\phi$ is not constant, we can take
$(\xi,\mu)\in\gra\partial\phi$ such that $\mu\neq 0$. By 
\eqref{e:subdiff},
\begin{equation}
\label{e:b3}
(\forall\eta\in\RR)\quad(\eta-\xi)\mu+\phi(\xi)\leq\phi(\eta).
\end{equation}
In particular, for $\eta<\xi$, since $\phi$ is increasing,
\eqref{e:b3} yields $\mu>0$. In turn, $\phi(\eta)\geq
(\eta-\xi)\mu+\phi(\xi)\uparrow\pinf$ when $\eta\to\pinf$ and we
deduce from \cite[Proposition~3.7]{Thib94} that $\phi\circ f$ is
lower semicontinuous. Finally, properness follows from \ref{p:1i}.

\ref{p:1iv}:
It follows from \cite[Proposition~4.11ii)]{Thib94},
Lemma~\ref{l:3}\ref{l:3ii}, and Lemma~\ref{l:66}\ref{l:66ii} that
\begin{align}
\label{e:thib94d}
(\phi\circ f)^*(x^*)
&=\min_{\xi^*\in\RP}\big(\phi^*(\xi^*)+(\xi^*\rocky f)^*(x^*)\big)
\nonumber\\
&=\min_{\xi^*\in\RR}\big(\phi^*(\xi^*)+\widetilde{f^*}(x^*,\xi^*)
\big).
\end{align}

\ref{p:1iii-}: This follows from Lemma~\ref{l:66}\ref{l:66iii} 
and \cite[Proposition~4.4]{Thib94}.

\ref{p:1iii}: This follows from Lemma~\ref{l:66}\ref{l:66iii} 
and \cite[Proposition~4.11i)]{Thib94}.

\ref{p:1v-b}$\Leftrightarrow$\ref{p:1v-a}: Since $f(x)\in\RR$,
Lemma~\ref{l:6}\ref{l:6i}, Lemma~\ref{l:66}\ref{l:66i}, and 
Lemma~\ref{l:2}\ref{l:2i} yield
\begin{multline}
(\phi\circ f)(x)+\phi^*(\xi^*)+(\xi^*\rocky 
f)^*(x^*)=\pair{x}{x^*}\\
\Leftrightarrow\quad 
(\xi^*\rocky f)(x)=\xi^* f(x)\leq \phi^*(\xi^*)+\phi(f(x))
=\pair{x}{x^*}-(\xi^*\rocky f)^*(x^*)
\leq (\xi^*\rocky f)(x).
\end{multline}

\ref{p:1v-b}$\Leftrightarrow$\ref{p:1v-c}: 
It follows from \ref{p:1iv} that
\begin{multline}
(\phi\circ f)(x)+\phi^*(\xi^*)+(\xi^*\rocky 
f)^*(x^*)=\pair{x}{x^*}\\
\Leftrightarrow\quad 
(\phi\circ f)^*(x^*)\leq \phi^*(\xi^*)+(\xi^*\rocky f)^*(x^*)=
\pair{x}{x^*}-(\phi\circ f)(x)\leq (\phi\circ f)^*(x^*).
\end{multline}

\ref{p:1v}: 
Note that \ref{p:1iv} and Lemma~\ref{l:66}\ref{l:66ii}
imply that  
\begin{equation}
(\phi\circ f)^*(x^*)=\min_{\eta^*\in\RR}
\big(\phi^*(\eta^*)+\widetilde{f^*}(x^*,\eta^*)\big)
=\min_{\eta^*\in\RR}
\big(\phi^*(\eta^*)+(\eta^*\,\rocky f)^*(x^*)\big).
\end{equation} 
Hence, Lemma~\ref{l:6}\ref{l:6ii} and \ref{p:1v-}
yield
\begin{align}
\begin{cases}
\xi^*\in\partial\phi\big(f(x)\big)\\
x^*\in\partial(\xi^*\rocky f)(x)
\end{cases}\;\;
&\Leftrightarrow\quad 
\begin{cases}
(\phi\circ f)(x)+\phi^*(\xi^*)=(\xi^*\rocky f)(x)\\
(\xi^*\rocky f)(x)+(\xi^*\rocky f)^*(x^*)=\pair{x}{x^*}
\end{cases}\nonumber\\
&\Leftrightarrow\quad 
\begin{cases}
(\phi\circ f)(x)+(\phi\circ f)^*(x^*)=\pair{x}{x^*}\\
\phi^*(\xi^*)+(\xi^*\rocky f)^*(x^*)=(\phi\circ f)^*(x^*)
\end{cases}\nonumber\\
&\Leftrightarrow\quad 
\begin{cases}
x^*\in\partial(\phi\circ f)(x)\\
\xi^*\in\Argmin\big(\phi^*+\widetilde{f^*}(x^*,\cdot)\big),
\end{cases}
\end{align}
which completes the proof.
\end{proof}

We now bring into play Rockafellar's perturbation theory.

\begin{definition}[\cite{Rock74}]
\label{d:74}
Let $\XX$ and $\UU$ be reflexive real Banach spaces, let 
$\varphi\colon\XX\to\RX$ be a proper function, and consider the 
primal problem
\begin{equation}
\label{e:p}
\minimize{x\in\XX}{\varphi(x)}.
\end{equation}
Let $\Phi\colon\XX\oplus\UU\to\RX$ be a perturbation of $\varphi$,
i.e., $(\forall x\in\XX)$ $\varphi(x)=\Phi(x,0)$. The Lagrangian is
\begin{equation}
\label{e:L}
\mathscr{L}_{\Phi}\colon\XX\oplus\UU^*\mapsto\RXX\colon 
(x,u^*)\mapsto\inf_{u\in\UU}\big(\Phi(x,u)-\pair{u}{u^*}\big),
\end{equation}
the dual problem is
\begin{equation}
\label{e:d}
\minimize{u^*\in\UU^*}
{\sup_{x\in\XX}\big(-\mathscr{L}_{\Phi}(x,u^*)\big)},
\end{equation}
and the Kuhn--Tucker operator is
\begin{equation}
\label{e:K}
\mathscr{K}_{\Phi}\colon\XX\oplus\UU^*\to 2^{\XX^*\oplus\,\UU}
\colon (x,u^*)\mapsto\partial
\big(\mathscr{L}_{\Phi}(\cdot,u^*)\big)(x)\times
\partial\big(-\mathscr{L}_{\Phi}(x,\cdot)\big)(u^*).
\end{equation}
\end{definition}

\begin{lemma}
\label{l:9}
Let $\XX$ and $\UU$ be reflexive real Banach spaces, let 
$\varphi\in\Gamma_0(\XX)$, let $\Phi\in\Gamma_0(\XX\oplus\UU)$ 
be a perturbation of $\varphi$, and let $\mathscr{K}_{\Phi}$
be the Kuhn--Tucker operator of \eqref{e:K}. Then the following
hold:
\begin{enumerate}
\item 
\label{l:9i-}
Let $x\in\XX$. Then
$(-\mathscr{L}_{\Phi}(x,\cdot))^*=\Phi(x,\cdot)$.
\item 
\label{l:9i}
$\mathscr{K}_{\Phi}$ is maximally monotone.
\item 
\label{l:9ii}
Suppose that $(x,u^*)\in\zer\mathscr{K}_{\Phi}$. Then $x$ solves 
\eqref{e:p} and $u^*$ solves \eqref{e:d}.
\end{enumerate}
\end{lemma}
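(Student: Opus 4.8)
The plan is to dispatch the three assertions in order, with \ref{l:9i} as the heart of the matter. For \ref{l:9i-} I would simply unwind \eqref{e:L}: for fixed $x\in\XX$,
\[
-\mathscr{L}_{\Phi}(x,u^*)=\sup_{u\in\UU}\big(\pair{u}{u^*}-\Phi(x,u)\big)=\big(\Phi(x,\cdot)\big)^*(u^*),
\]
so that $-\mathscr{L}_{\Phi}(x,\cdot)=(\Phi(x,\cdot))^*$ and hence $(-\mathscr{L}_{\Phi}(x,\cdot))^*=(\Phi(x,\cdot))^{**}$. It then suffices to invoke biconjugation (Lemma~\ref{l:2}\ref{l:2i}) on the slice $\Phi(x,\cdot)$. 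The one point needing care is an improper slice: since $\Phi\in\Gamma_0(\XX\oplus\UU)$ never attains $\minf$, such a slice is identically $\pinf$, and then both sides equal $\pinf$ by direct computation, so the identity persists.

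For \ref{l:9i}, my strategy is to produce a pairing-preserving bijection between $\gra\mathscr{K}_{\Phi}$ and $\gra\partial\Phi$ and then transport maximal monotonicity of $\partial\Phi$ (Lemma~\ref{l:2}\ref{l:2iii}). Concretely, I would establish
\[
(x^*,u)\in\mathscr{K}_{\Phi}(x,u^*)\quad\Longleftrightarrow\quad(x^*,u^*)\in\partial\Phi(x,u).
\]
For ``$\Leftarrow$'', the joint subgradient inequality $\Phi(y,v)\geq\Phi(x,u)+\pair{y-x}{x^*}+\pair{v-u}{u^*}$, valid for all $(y,v)$, gives after subtracting $\pair{v}{u^*}$ and infimizing over $v$ that $x^*\in\partial(\mathscr{L}_{\Phi}(\cdot,u^*))(x)$; and its restriction to $y=x$ reads $u^*\in\partial(\Phi(x,\cdot))(u)$, which by the inversion rule (Lemma~\ref{l:2}\ref{l:2ii}) and \ref{l:9i-} gives $u\in\partial(-\mathscr{L}_{\Phi}(x,\cdot))(u^*)$. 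For ``$\Rightarrow$'', the second membership together with \ref{l:9i-} yields $u\in\partial((\Phi(x,\cdot))^*)(u^*)$, hence $u^*\in\partial(\Phi(x,\cdot))(u)$ (Lemma~\ref{l:2}\ref{l:2ii}) and, by the Fenchel equality (Lemma~\ref{l:6}\ref{l:6ii}), $\mathscr{L}_{\Phi}(x,u^*)=\Phi(x,u)-\pair{u}{u^*}$ with the infimum in \eqref{e:L} attained at $u$; feeding this into the first membership upgrades the resulting inequality in $x$ back to the joint inequality defining $\partial\Phi(x,u)$. Since the duality pairing on $(\XX\oplus\UU^*)\times(\XX^*\oplus\UU)$ reads $\pair{(x,u^*)}{(x^*,u)}=\pair{x}{x^*}+\pair{u}{u^*}$, the swap $(x,u^*,x^*,u)\mapsto(x,u,x^*,u^*)$ leaves the monotonicity form $\pair{x_1-x_2}{x^*_1-x^*_2}+\pair{u_1-u_2}{u^*_1-u^*_2}$ invariant; being a bijection carrying $\gra\mathscr{K}_{\Phi}$ onto $\gra\partial\Phi$, it transfers maximal monotonicity from $\partial\Phi$ to $\mathscr{K}_{\Phi}$. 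The main obstacle lies in the ``$\Rightarrow$'' direction: before inverting the subdifferential of the slice one must certify that $\Phi(x,\cdot)$ is proper (so that Lemma~\ref{l:2} applies and the infimum is attained at $u$), and it is precisely here that reflexivity of $\UU$, giving $\UU^{**}=\UU$ and legitimizing viewing the second output component as an element of $\UU$, is indispensable.

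For \ref{l:9ii}, I would read $(x,u^*)\in\zer\mathscr{K}_{\Phi}$ as the two conditions $0\in\partial(\mathscr{L}_{\Phi}(\cdot,u^*))(x)$ and $0\in\partial(-\mathscr{L}_{\Phi}(x,\cdot))(u^*)$, i.e., by Lemma~\ref{l:2}\ref{l:2iv}, that $x$ minimizes $\mathscr{L}_{\Phi}(\cdot,u^*)$ and $u^*$ maximizes $\mathscr{L}_{\Phi}(x,\cdot)$. Evaluating \ref{l:9i-} at $0\in\UU$ gives the primal representation $\varphi(y)=\Phi(y,0)=\sup_{w^*}\mathscr{L}_{\Phi}(y,w^*)$; the maximization condition then forces $\varphi(x)=\mathscr{L}_{\Phi}(x,u^*)$, while the minimization condition identifies $-\mathscr{L}_{\Phi}(x,u^*)=\sup_{y}(-\mathscr{L}_{\Phi}(y,u^*))$ as the dual objective \eqref{e:d} at $u^*$, so that this dual value equals $-\varphi(x)$. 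Combining these with the weak-duality chain $\varphi(y)\geq\mathscr{L}_{\Phi}(y,w^*)\geq\inf_{y'}\mathscr{L}_{\Phi}(y',w^*)$, valid for all $y$ and $w^*$, and specializing $w^*=u^*$ and then $y=x$, I would conclude that $x$ solves \eqref{e:p} and $u^*$ solves \eqref{e:d}.
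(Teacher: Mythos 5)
Your proof is correct, but for parts \ref{l:9i} and \ref{l:9ii} it takes a genuinely different route from the paper. Part \ref{l:9i-} is handled exactly as in the paper (read $-\mathscr{L}_{\Phi}(x,\cdot)=(\Phi(x,\cdot))^*$ off \eqref{e:L} and biconjugate via Lemma~\ref{l:2}\ref{l:2i}), with the added care of disposing of an improper slice $\Phi(x,\cdot)\equiv\pinf$, a case the paper passes over silently. For \ref{l:9i}, the paper does not argue at all: it cites \cite[Theorem~6 and Example~13]{Rock74} to assert that $\mathscr{L}_{\Phi}$ is a closed proper saddle function and then \cite[Theorem~3]{Roc70b} for maximal monotonicity of the associated operator. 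You instead establish the graph identity $(x^*,u)\in\mathscr{K}_{\Phi}(x,u^*)\Leftrightarrow(x^*,u^*)\in\partial\Phi(x,u)$ and transport maximal monotonicity from $\partial\Phi$ (Lemma~\ref{l:2}\ref{l:2iii}) through the pairing-preserving swap $(x,u^*,x^*,u)\mapsto(x,u,x^*,u^*)$; this in effect inlines the proof of the cited results, and your two points of care --- certifying properness of the slice $\Phi(x,\cdot)$ before invoking Lemma~\ref{l:2}\ref{l:2ii}, and using reflexivity of $\UU$ to identify $(\XX\oplus\UU^*)^*$ with $\XX^*\oplus\UU$ --- are exactly where the argument could otherwise break. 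For \ref{l:9ii} the paper again cites \cite[Theorem~15]{Rock74}, whereas you reprove it by the standard saddle-point/weak-duality computation, using \ref{l:9i-} at $u=0$ to get $\varphi(y)=\sup_{w^*}\mathscr{L}_{\Phi}(y,w^*)$ and then matching primal and dual values at $(x,u^*)$. The paper's route buys brevity and situates the lemma inside Rockafellar's general perturbation framework; yours buys a self-contained argument resting only on Lemmas~\ref{l:2} and~\ref{l:6}, at the cost of working directly with the (possibly improper) partial functions of the Lagrangian, where the paper's notion of subdifferential is being stretched slightly --- an abuse the paper itself commits in writing \eqref{e:K}.
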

\begin{proof}
\ref{l:9i-}: Let $u\in\UU$. Then, appealing to \eqref{e:L} and 
Lemma~\ref{l:2}\ref{l:2i}, we obtain
\begin{equation}
\big(-\mathscr{L}_{\Phi}(x,\cdot)\big)^*(u)=\sup_{u^*\in\,\UU^*}
\Big(\pair{u}{u^*}-\big(\Phi(x,\cdot)\big)^*(u^*)\Big)
=\big(\Phi(x,\cdot)\big)^{**}(u)=\Phi(x,u).
\end{equation}

\ref{l:9i}: Since $\Phi\in\Gamma_0(\XX\oplus\UU)$, it follows from
\cite[Theorem~6~and~Example~13]{Rock74} that the Lagrangian
$\mathscr{L}_{\Phi}$ is a closed proper saddle function in the
sense of \cite[Section~3]{Roc70b}. The claim therefore follows from
\cite[Theorem~3]{Roc70b}.

\ref{l:9ii}: See \cite[Theorem~15]{Rock74}.
\end{proof}

\section{Perturbation theory for nonlinear composite minimization}
\label{sec:3}

Our strategy to analyze and solve Problem~\ref{prob:1} hinges on
the perturbation function $F$ introduced in \eqref{e:F}. We
first study the nonlinear composition component of this
perturbation.

\begin{proposition}
\label{p:A}
Let $\XX$ be a reflexive real Banach space, let
$\phi\in\Gamma_0(\RR)$ be a nonconstant increasing function,
let $f\in\Gamma_0(\XX)$ be such that 
$(\dom\phi)\cap f(\dom f)\neq\emp$, and let
\begin{align}
\label{e:P}
\Psi\colon \XX\oplus\RR&\to\RX\nonumber\\
(x,\xi)&\mapsto 
\begin{cases}
\phi\big(f(x)+\xi\big),&\text{if}\;\;f(x)+\xi\in\dom\phi;\\
\pinf,&\text{if}\;\;f(x)+\xi\notin\dom\phi
\end{cases}
\end{align}
be a perturbation of $\phi\circ f$. Let $\mathscr{L}_{\Psi}$ be the
associated Lagrangian (see \eqref{e:L}) and let
$\mathscr{K}_{\Psi}$ be the associated Kuhn--Tucker operator (see
\eqref{e:K}). Then the following hold:
\begin{enumerate}
\item 
\label{p:Ai-}
$\Psi\in\Gamma_0(\XX\oplus\RR)$.
\item 
\label{p:Ai}
We have 
\begin{align}
\label{e:97}
\hspace{-8mm}\mathscr{L}_{\Psi}\colon\XX\oplus\RR
&\to\RXX\nonumber\\
(x,\xi^*)&\mapsto 
\begin{cases}
\pinf,&\text{if}\;\;x\notin \dom f;\\
(\xi^*\rocky f)(x)-\phi^*(\xi^*),&\text{if}\;\; x\in \dom 
f\,\;\text{and}\,\,\xi^*\in\dom\phi^*;\\
\minf, 
&\text{if}\;\;x\in\dom f\,\;\text{and}\,\,\xi^*\notin\dom\phi^*.
\end{cases}
\end{align}
\item 
\label{p:Aii}
We have 
\begin{align}
\label{e:Akt}
\hspace{-7mm}\mathscr{K}_{\Psi}\colon\XX\oplus\RR
&\to 2^{\XX^*\oplus\RR}\nonumber\\
(x,\xi^*)&\mapsto
\begin{cases}
\partial(\xi^*\rocky f)(x)\times\big
(\partial\phi^*(\xi^*)-f(x)\big),&\text{if}\;\;x\in\dom 
f\,\;\text{and}\,\,\xi^*\in\dom\phi^*;\\
\emp,&\text{if}\;\;x\notin\dom 
f\,\;\text{or}\,\,\xi^*\notin\dom\phi^*.
\end{cases}
\end{align}
\item 
\label{p:Aiv}
$\dom\mathscr{K}_{\Psi}\subset\dom f\times\RP$. 
\item 
\label{p:Av}
$\mathscr{K}_{\Psi}$ is maximally monotone.
\end{enumerate}
\end{proposition}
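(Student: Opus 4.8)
The plan is to deduce \ref{p:Ai-} from the composition theory already assembled, to obtain the explicit formulas \ref{p:Ai} and \ref{p:Aii} by direct computation from the definitions \eqref{e:L} and \eqref{e:K}, and then to read off \ref{p:Aiv} and \ref{p:Av} as short consequences. For \ref{p:Ai-} I would introduce the auxiliary function $g\colon\XX\oplus\RR\to\RX\colon(x,\xi)\mapsto f(x)+\xi$. Being the sum of $(x,\xi)\mapsto f(x)$ and the continuous linear form $(x,\xi)\mapsto\xi$, it lies in $\Gamma_0(\XX\oplus\RR)$, with $\dom g=\dom f\times\RR$ and, since $\dom f\neq\emp$, $g(\dom g)=\RR$; hence $(\dom\phi)\cap g(\dom g)=\dom\phi\neq\emp$. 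Because $\Psi=\phi\circ g$ in the sense of \eqref{e:c1}, Proposition~\ref{p:1}\ref{p:1ii} applied to $\phi$ and $g$ gives $\Psi\in\Gamma_0(\XX\oplus\RR)$.

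For \ref{p:Ai} I would evaluate $\mathscr{L}_\Psi(x,\xi^*)=\inf_{\xi\in\RR}(\Psi(x,\xi)-\xi\xi^*)$. When $x\notin\dom f$ we have $\Psi(x,\cdot)\equiv\pinf$, producing the value $\pinf$. When $x\in\dom f$, the substitution $\eta=f(x)+\xi$ converts the infimum into $f(x)\xi^*-\sup_{\eta}(\eta\xi^*-\phi(\eta))=f(x)\xi^*-\phi^*(\xi^*)$, which is $\minf$ when $\xi^*\notin\dom\phi^*$; for $\xi^*\in\dom\phi^*$ one uses the identity $f(x)\xi^*=(\xi^*\rocky f)(x)$ from \eqref{e:66}, valid directly for $\xi^*>0$ and, for $\xi^*=0$, because $x\in\dom f\subset\cdom f$ forces $(0\rocky f)(x)=0=f(x)\xi^*$. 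This yields the three cases of \eqref{e:97}.

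For \ref{p:Aii} I would subdifferentiate \eqref{e:97} in each slot as prescribed by \eqref{e:K}. Fixing $\xi^*\in\dom\phi^*$: for $\xi^*>0$ the map $\mathscr{L}_\Psi(\cdot,\xi^*)$ equals $\xi^*f-\phi^*(\xi^*)$, with subdifferential $\xi^*\partial f(x)=\partial(\xi^*\rocky f)(x)$ by Lemma~\ref{l:66}\ref{l:66iii}; for $\xi^*=0$ it equals $\iota_{\dom f}-\phi^*(0)$, whose subdifferential at $x\in\dom f$ is $N_{\dom f}(x)$. The one delicate point is that the genuine Lagrangian carries $\iota_{\dom f}$ rather than the $\iota_{\cdom f}$ built into $0\rocky f$; I would reconcile them by the elementary fact $N_{\dom f}(x)=N_{\cdom f}(x)=\partial(0\rocky f)(x)$ for $x\in\dom f$, obtained by passing to the closure in the defining inequality of the normal cone. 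Fixing $x\in\dom f$, the map $-\mathscr{L}_\Psi(x,\cdot)$ equals $\phi^*-f(x)(\cdot)$, so its subdifferential is $\partial\phi^*(\xi^*)-f(x)$. Taking the Cartesian product gives the nonempty case of \eqref{e:Akt}; outside $\dom f\times\dom\phi^*$ one of the two factors is a subdifferential evaluated off the domain of its function, hence empty.

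Finally, \ref{p:Aiv} follows by noting from \eqref{e:Akt} that $\mathscr{K}_\Psi(x,\xi^*)\neq\emp$ forces $x\in\dom f$ and $\xi^*\in\dom\phi^*\subset\RP$, the last inclusion being Lemma~\ref{l:3}\ref{l:3ii}. For \ref{p:Av}, since $\phi\circ f\in\Gamma_0(\XX)$ by Proposition~\ref{p:1}\ref{p:1ii}, $\Psi\in\Gamma_0(\XX\oplus\RR)$ by \ref{p:Ai-}, and $\Psi(\cdot,0)=\phi\circ f$, the function $\Psi$ is a $\Gamma_0$ perturbation of $\phi\circ f$, so maximal monotonicity of $\mathscr{K}_\Psi$ is immediate from Lemma~\ref{l:9}\ref{l:9i}. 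The only genuine work is therefore the bookkeeping in \ref{p:Ai}--\ref{p:Aii}, the subtle step being the $\xi^*=0$ normal-cone identification; parts \ref{p:Aiv}--\ref{p:Av} come essentially for free.
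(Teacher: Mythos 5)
Your proof is correct and follows essentially the same route as the paper's: the same reduction $\Psi=\phi\circ\boldsymbol{f}$ with $\boldsymbol{f}\colon(x,\xi)\mapsto f(x)+\xi$ fed into Proposition~\ref{p:1}\ref{p:1ii}, the same direct computation of the Lagrangian, and the same appeals to Lemma~\ref{l:3}\ref{l:3ii} and Lemma~\ref{l:9}\ref{l:9i} for \ref{p:Aiv} and \ref{p:Av}. The only (harmless) variations are that you compute $\partial\big({-}\mathscr{L}_{\Psi}(x,\cdot)\big)$ by directly subdifferentiating $\phi^*-f(x)(\cdot)$ where the paper instead passes through the Fenchel equality via Lemma~\ref{l:6}\ref{l:6ii} and Lemma~\ref{l:9}\ref{l:9i-}, and that you make explicit the identification $N_{\dom f}(x)=N_{\cdom f}(x)=\partial(0\rocky f)(x)$ in the case $\xi^*=0$, a point the paper leaves implicit.
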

\begin{proof}
\ref{p:Ai-}: Set $\boldsymbol{f}\colon\XX\oplus\RR\to\RX\colon 
(x,\xi)\mapsto f(x)+\xi$. Then 
$\boldsymbol{f}\in\Gamma_0(\XX\oplus\RR)$ and \eqref{e:c1} implies 
that $\Psi=\phi\circ\boldsymbol{f}$. 
Thus, since $\emp\neq (\dom\phi)\cap f(\dom f)\subset 
(\dom\phi)\cap\boldsymbol{f}(\dom\boldsymbol{f})$, the result
follows from Proposition~\ref{p:1}\ref{p:1ii}.

\ref{p:Ai}: It follows from \eqref{e:L} that, for every 
$(x,\xi^*)\in\XX\times\RR$, 
\begin{align} 
\mathscr{L}_{\Psi}(x,\xi^*)
&=\inf_{\xi\in\RR}\big(\Psi(x,\xi)-\xi\xi^*\big)\nonumber\\
&=
\begin{cases}
-\sup\limits_{\xi\in(\dom\phi)-f(x)}\Big(\xi\xi^*-
\phi\big(f(x)+\xi\big)\Big),&\text{if}\;\;x\in\dom f;\\
\pinf,&\text{if}\;\;x\notin\dom f;\\
\end{cases}\nonumber\\
&=
\begin{cases}
\xi^*f(x)-\sup\limits_{\xi+f(x)\in\dom\phi}\Big(\big(\xi+f(x)\big)
\xi^*-\phi\big(f(x)+\xi\big)\Big),&\text{if}\;\;x\in\dom f;\\
\pinf,&\text{if}\;\;x\notin\dom f;\\
\end{cases}\nonumber\\
&=
\begin{cases}
\xi^*f(x)-\phi^*(\xi^*),&\text{if}\;\; x\in\dom f;\\
\pinf,&\text{if}\;\;x\notin\dom f.
\end{cases}
\end{align}
In view of Lemma~\ref{l:3}\ref{l:3ii} and \eqref{e:66}, we obtain
\eqref{e:97}.

\ref{p:Aii}:
Let $x\in\XX$ and $\xi^*\in\RR$.
If $x\notin\dom f$, then \ref{p:Ai} yields
$\mathscr{L}_{\Psi}(x,\xi^*)=\pinf$ and hence 
$\mathscr{K}_{\Psi}(x,\xi^*)=\emp$ in view of \eqref{e:K}.
Similarly, if $\xi^*\notin\dom\phi^*$, then 
$-\mathscr{L}_{\Psi}(x,\xi^*)=\pinf$ and hence
$\mathscr{K}_{\Psi}(x,\xi^*)=\emp$.
Now suppose that $x\in\dom f$ and $\xi^*\in\dom\phi^*$. 
Then it follows from \ref{p:Ai} and Lemma~\ref{l:3}\ref{l:3ii} 
that $\partial(\mathscr{L}_{\Psi}(\cdot,\xi^*))
=\partial(\xi^*\rocky f)$. Moreover, for every $\xi\in\RR$, we
derive from Lemma~\ref{l:6}\ref{l:6ii}, Lemma~\ref{l:9}\ref{l:9i-},
\ref{p:Ai}, Lemma~\ref{l:2}\ref{l:2i}, and \eqref{e:P} that
\begin{align}
\xi\in\partial\big(-\mathscr{L}_{\Psi}(x,\cdot)\big)
(\xi^*)\;\;&\Leftrightarrow\;\;-\mathscr{L}_{\Psi}(x,\xi^*)+
\big(-\mathscr{L}_{\Psi}(x,\cdot)\big)^*(\xi)=\xi\xi^*\nonumber\\
&\Leftrightarrow\;\; 
\phi^*(\xi^*)+\phi\big(f(x)+\xi\big)=\big(f(x)+\xi\big)\xi^*
\nonumber\\
&\Leftrightarrow\;\;f(x)+\xi\in\partial\phi^*(\xi^*).
\end{align}
Altogether, this verifies that \eqref{e:K} assumes the form
announced in \eqref{e:Akt}. 

\ref{p:Aiv}: It follows from \eqref{e:Akt} and
Lemma~\ref{l:3}\ref{l:3ii} that
$\dom\mathscr{K}_{\Psi}
\subset\dom f\times\dom\partial\phi^*
\subset\dom f\times\dom\phi^*
\subset\dom f\times\RP$. 

\ref{p:Av}: This is a consequence of \ref{p:Ai-} and
Lemma~\ref{l:9}\ref{l:9i}.
\end{proof}

Next, we provide a characterization of the solutions to
Problem~\ref{prob:1} in terms of a monotone inclusion problem in
$\XX\oplus\RR$.

\begin{proposition}
\label{p:14}
Consider the setting of Problem~\ref{prob:1} and the Kuhn--Tucker
operator $\mathscr{K}_{\Psi}$ of Proposition~\ref{p:A}\ref{p:Aii}.
Set
\begin{equation}
\label{e:B}
\boldsymbol{B}\colon\XX\oplus\RR\to 2^{\XX^*\oplus\RR}
\colon (x,\xi^*)\mapsto \Big(\big(L^*\circ
\big((\partial g)\infconv(\partial\ell)\big)\circ L\big)(x)
+\partial h(x)\Big)\times\{0\}.
\end{equation}
Then the following hold:
\begin{enumerate}
\item
\label{p:14ii}
$\boldsymbol{B}$ is maximally monotone.
\item
\label{p:14iii}
$\mathscr{P}=\bigcup_{\xi^*\in\RR}\menge{x\in\XX}
{(x,\xi^*)\in\zer(\mathscr{K}_{\Psi}+\boldsymbol{B})}$.
\end{enumerate}
\end{proposition}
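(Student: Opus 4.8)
The plan is to establish both assertions by reducing them to subdifferential calculus on the product space $\XX\oplus\RR$. For \ref{p:14ii}, I would exhibit $\boldsymbol{B}$ as a subdifferential and then appeal to Lemma~\ref{l:2}\ref{l:2iii}. The first step is to use assumption~\ref{a:1ii} with Lemma~\ref{l:4} to collapse the parallel sum into a single subdifferential, $(\partial g)\infconv(\partial\ell)=\partial(g\infconv\ell)$, and to use Lemma~\ref{l:1}\ref{l:1i} applied to $g^*$ and $\ell^*$ (together with Lemma~\ref{l:2}\ref{l:2i}) to secure $g\infconv\ell=(g^*+\ell^*)^*\in\Gamma_0(\YY)$. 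Introducing $\boldsymbol{r}\colon\XX\oplus\RR\to\RX\colon(x,\xi^*)\mapsto(g\infconv\ell)(Lx)+h(x)$, which does not depend on $\xi^*$, the composite subdifferential calculus (the chain rule for $L$ combined with the sum rule of Lemma~\ref{l:1}\ref{l:1ii}) should give $\partial\boldsymbol{r}(x,\xi^*)=\big(L^*\partial(g\infconv\ell)(Lx)+\partial h(x)\big)\times\{0\}$, which is precisely $\boldsymbol{B}(x,\xi^*)$ in \eqref{e:B}. Since $\boldsymbol{r}\in\Gamma_0(\XX\oplus\RR)$, Lemma~\ref{l:2}\ref{l:2iii} then yields maximal monotonicity of $\boldsymbol{B}$.

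For \ref{p:14iii}, I would characterize $\mathscr{P}$ via Fermat's rule and match the outcome coordinatewise against $\zer(\mathscr{K}_{\Psi}+\boldsymbol{B})$. By Lemma~\ref{l:2}\ref{l:2iv}, $x\in\mathscr{P}$ is equivalent to $0\in\partial\Theta(x)$, where $\Theta=(\phi\circ f)+(g\infconv\ell)\circ L+h$ is the objective of \eqref{e:p1}. Applying the sum rule in a deliberate order---first merging $\phi\circ f$ and $h$, then adjoining $(g\infconv\ell)\circ L$---should produce $\partial\Theta(x)=\partial(\phi\circ f)(x)+L^*\partial(g\infconv\ell)(Lx)+\partial h(x)$. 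Unfolding the nonlinear term with Proposition~\ref{p:1}\ref{p:1iii}, whose hypothesis is assumption~\ref{a:1i}, shows that $x\in\mathscr{P}$ holds if and only if there exists $\xi^*\in\partial\phi(f(x))$ with $0\in\partial(\xi^*\rocky f)(x)+L^*\partial(g\infconv\ell)(Lx)+\partial h(x)$. On the other hand, reading off \eqref{e:Akt} and \eqref{e:B}, a direct computation shows that $(x,\xi^*)\in\zer(\mathscr{K}_{\Psi}+\boldsymbol{B})$ amounts to the first coordinate $0\in\partial(\xi^*\rocky f)(x)+L^*\partial(g\infconv\ell)(Lx)+\partial h(x)$ together with the second coordinate $f(x)\in\partial\phi^*(\xi^*)$. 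Finally, Lemma~\ref{l:2}\ref{l:2ii} converts $f(x)\in\partial\phi^*(\xi^*)$ into $\xi^*\in\partial\phi(f(x))$, so the two descriptions coincide; taking the union over $\xi^*\in\RR$ then gives the claimed identity.

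The main obstacle is the bookkeeping of the constraint qualifications, i.e.\ ensuring that the sum and chain rules invoked above are licensed by exactly \ref{a:1i}--\ref{a:1iv}. For \ref{p:14iii} the grouping must be chosen so that merging $\phi\circ f$ with $h$ is governed by \ref{a:1iii} (namely $0\in\sri(f^{-1}(\dom\phi)-\dom h)$, using $\dom(\phi\circ f)=f^{-1}(\dom\phi)$ from Proposition~\ref{p:1}\ref{p:1i}), while adjoining the composite $(g\infconv\ell)\circ L$, whose inner function has domain $\dom g+\dom\ell$, is governed by \ref{a:1iv}, in which the domain $\dom h\cap f^{-1}(\dom\phi)$ of the already-merged term appears. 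Checking that these strong-relative-interior hypotheses are precisely the ones required by the composite calculus, and that the analogous qualification goes through for $\boldsymbol{r}$ in \ref{p:14ii}, is where the real work lies; the remaining coordinatewise matching, and the inversion $f(x)\in\partial\phi^*(\xi^*)\Leftrightarrow\xi^*\in\partial\phi(f(x))$, are routine.
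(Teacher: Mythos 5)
Your proposal is correct and follows essentially the same route as the paper: part (i) via realizing $\boldsymbol{B}$ as the subdifferential of $(x,\xi^*)\mapsto(g\infconv\ell)(Lx)+h(x)$ under the qualification supplied by \ref{a:1iv} (the paper phrases this for the $\XX$-component only, but the product-space version is the same argument), and part (ii) via Fermat's rule, the sum/chain rules licensed by \ref{a:1i}--\ref{a:1iv}, Proposition~\ref{p:1}\ref{p:1iii}, Lemma~\ref{l:4}, and the inversion $\partial\phi^*=(\partial\phi)^{-1}$ from Lemma~\ref{l:2}\ref{l:2ii}. The bookkeeping of the strong-relative-interior conditions that you flag as the remaining work does go through exactly as you describe.
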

\begin{proof}
Lemma~\ref{l:2}\ref{l:2i}, Lemma~\ref{l:1}\ref{l:1i}, and 
assumption~\ref{a:1ii} in Problem~\ref{prob:1} yield
\begin{equation}
\label{e:trust}
g\infconv\ell\in\Gamma_0(\YY). 
\end{equation}

\ref{p:14ii}:
Since assumption~\ref{a:1iv} in Problem~\ref{prob:1} implies that 
$0\in\sri(L(\dom h)-\dom (g\infconv \ell))$, we derive from
\eqref{e:trust} that $(g\infconv\ell)\circ
L+h\in\Gamma_0(\XX)$ and it follows from Lemma~\ref{l:4},
\cite[Theorem~2.8.3(vii)]{Zali02}, and Lemma~\ref{l:2}\ref{l:2iii}
that 
\begin{equation}
\label{e:h7lb}
L^*\circ\big((\partial g)\infconv(\partial\ell)\big)\circ
L+\partial h
=L^*\circ\partial (g\infconv\ell)\circ L+\partial h
=\partial\big((g\infconv\ell)\circ L+h\big) 
\end{equation}
is maximally monotone. Thus, $\boldsymbol{B}$ is maximally
monotone.

\ref{p:14iii}: 
It follows from assumption~\ref{a:1i} in Problem~\ref{prob:1} 
and Proposition~\ref{p:1}\ref{p:1ii} that 
$\phi\circ f\in\Gamma_0(\XX)$,
and therefore from assumption~\ref{a:1iii} that
$(\phi\circ f)+h\in\Gamma_0(\XX)$. Altogether, using
Lemma~\ref{l:2}\ref{l:2iv},
\eqref{e:trust}, assumptions~\ref{a:1i}--\ref{a:1iv}, 
\cite[Theorem~2.8.3(vii)]{Zali02}, 
Proposition~\ref{p:1}\ref{p:1iii}, 
Lemma~\ref{l:4}, and Lemma~\ref{l:2}\ref{l:2ii}, we obtain 
\begin{eqnarray}
x\in\mathscr{P}\quad
&\Leftrightarrow&\quad 
0\in\partial\big(\phi\circ f+(g\infconv\ell)\circ L+h\big)(x)
\nonumber\\
&\Leftrightarrow&\quad 
\big(\exi\xi^*\in\partial\phi\big(f(x)\big)\big)
\quad 0\in\partial(\xi^*\rocky f)(x)+L^*\big(\partial 
(g\infconv\ell)(Lx)\big)+\partial h(x)\nonumber\\
\quad&\Leftrightarrow&\quad (\exi\xi^*\in\RR)\quad 
\begin{cases}
0\in\partial(\xi^*\rocky f)(x)+ 
L^*\Big(\big((\partial g)\infconv(\partial\ell)\big)(Lx)\Big)
+\partial h(x)\\
0\in\partial\phi^*(\xi^*)-f(x).
\end{cases}
\end{eqnarray}
Thus, the result follows from Proposition~\ref{p:A}\ref{p:Aii}
and \eqref{e:B}.
\end{proof}

Following the abstract perturbation theory of \cite{Rock74}
outlined in Definition~\ref{d:74}, the perturbation \eqref{e:F}
allows us to construct a Lagrangian for Problem~\ref{prob:1}, a
dual problem, and a Kuhn--Tucker operator that will provide
solutions to Problem~\ref{prob:1} and its dual. 
This program is described in the next theorem.

\begin{theorem}
\label{t:1}
Consider the setting of Problem~\ref{prob:1}, set 
$D=\dom f\cap\dom h$ and set 
$V=\dom\phi^*\times(\dom g^*\cap\dom\ell^*)$. Then the following
hold with respect to the perturbation function $F$ of
\eqref{e:F}:
\begin{enumerate}
\item 
\label{t:1i}
The Lagrangian is
\begin{align}
\label{e:L1}
\hspace{-8mm}
\mathscr{L}_F\colon\XX\times\RR\times\YY^*
&\to\RXX\nonumber\\
(x,\xi^*,y^*)&\mapsto 
\begin{cases}
\pinf,\hspace{28mm}\text{if}\;\; x\notin D;\\
(\xi^*\rocky f)(x)+
h(x)+\pair{Lx}{y^*}-\phi^*(\xi^*)-g^*(y^*)-\ell^*(y^*),\\
\hspace{37mm}\text{if}\;\; x\in D\;\;\text{and}\;\;
(\xi^*,y^*)\in V;\\
\minf,\hspace{28.4mm}\text{if}\;\; 
x\in D\;\;\text{and}\;\;(\xi^*,y^*)\notin V.
\end{cases}
\end{align}
\item 
\label{t:1iii}
The dual problem is
\begin{equation}
\label{e:p2}
\minimize{(\xi^*,y^*)\in\RR\times\YY^*}
{\phi^*(\xi^*)+\big(h^*\infconv 
\widetilde{f^*}(\cdot,\xi^*)\big)(-L^*y^*)+g^*(y^*)+\ell^*(y^*)}.
\end{equation}
\item 
\label{t:1ii}
The Kuhn--Tucker operator is 
\begin{align}
\label{e:kt}
\hspace{-7mm}\mathscr{K}_F\colon\XX\times\RR\times\YY^*
&\to2^{\XX^*\times\RR\times\YY}\nonumber\\
(x,\xi^*,y^*)&\mapsto
\begin{cases}
\big(\partial(\xi^*\rocky f)(x)+\partial h(x)+L^*y^*\big)\times 
\big(\partial\phi^*(\xi^*)-f(x)\big)\\
\hspace{5mm}\times \big(\partial g^*(y^*)
+\partial \ell^*(y^*)-Lx\big),&\hspace{-25mm}\text{if}\;\; x\in 
D\;\;\text{and}\;\;(\xi^*,y^*)\in V;\\
\emp,&\hspace{-25mm}\text{if}\;\; x\notin 
D\;\;\text{or}\;\;(\xi^*,y^*)\notin V.
\end{cases}
\end{align}
\item 
\label{t:1v}
Let $\mathscr{D}$ be the set of solutions to \eqref{e:p2}. Then
$\zer\mathscr{K}_F\subset\mathscr{P}\times\mathscr{D}$. 
\item 
\label{t:1vi}
$\mathscr{P}=\displaystyle{\bigcup_{(\xi^*,y^*)\in\RR\times\YY^*}}
\menge{x\in\XX}{(x,\xi^*,y^*)\in\zer\mathscr{K}_F}$.
\end{enumerate}
\end{theorem}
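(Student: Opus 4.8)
The plan is to apply the abstract machinery of Definition~\ref{d:74} and Lemma~\ref{l:9} directly to the perturbation $F$ of \eqref{e:F}, using the two-variable structure $\UU=\RR\times\YY$ so that the dual variable is $(\xi^*,y^*)\in\RR\times\YY^*$. The key observation is that $F$ decomposes as a sum of the nonlinear-composition piece $\Psi$ studied in Proposition~\ref{p:A} and the linear-composition piece governed by $g\infconv\ell$, $L$, and $h$. For the Lagrangian computation in~\ref{t:1i}, I would carry out the infimum in \eqref{e:L} over $(\xi,y)\in\RR\times\YY$ separately in $\xi$ and in $y$. The $\xi$-infimum reproduces exactly the calculation in Proposition~\ref{p:A}\ref{p:Ai}, yielding the term $(\xi^*\rocky f)(x)-\phi^*(\xi^*)$ together with the trichotomy $\pinf$/finite/$\minf$ according to whether $x\in\dom f$ and $\xi^*\in\dom\phi^*$. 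The $y$-infimum, $\inf_{y}\big((g\infconv\ell)(Lx+y)-\pair{y}{y^*}\big)$, is a standard Fenchel computation: after the change of variable $y\mapsto Lx+y$ it produces $\pair{Lx}{y^*}-(g\infconv\ell)^*(y^*)$, and by Lemma~\ref{l:1}\ref{l:1i} applied under assumption~\ref{a:1ii} one has $(g\infconv\ell)^*=g^*+\ell^*$, with finiteness governed by $y^*\in\dom g^*\cap\dom\ell^*$. Adding the untouched $h(x)$ term and combining the domain conditions into $x\in D$ and $(\xi^*,y^*)\in V$ gives \eqref{e:L1}.

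For part~\ref{t:1iii}, I would substitute \eqref{e:L1} into the definition \eqref{e:d} of the dual problem, i.e., compute $\sup_{x\in\XX}\big(-\mathscr{L}_F(x,\xi^*,y^*)\big)$ over $x\in D$. The terms not depending on $x$ factor out, and the $x$-supremum becomes $\sup_{x}\big(-(\xi^*\rocky f)(x)-h(x)-\pair{Lx}{y^*}\big)=\big(h+(\xi^*\rocky f)+\pair{L\cdot}{y^*}\big)^*(0)$, which I would recognize as $\big(h^*\infconv\widetilde{f^*}(\cdot,\xi^*)\big)(-L^*y^*)$ using Lemma~\ref{l:66}\ref{l:66ii} (to identify $(\xi^*\rocky f)^*=\widetilde{f^*}(\cdot,\xi^*)$) together with the conjugate-of-a-sum/infimal-convolution formula under the qualification condition~\ref{a:1iii}. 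Collecting everything yields \eqref{e:p2}.

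Part~\ref{t:1ii} computes the Kuhn--Tucker operator \eqref{e:K} as the product of $\partial_x\mathscr{L}_F$ and $\partial_{(\xi^*,y^*)}\big(-\mathscr{L}_F\big)$. For the $x$-block, on the region $x\in D$, $(\xi^*,y^*)\in V$ the Lagrangian in $x$ is $(\xi^*\rocky f)(x)+h(x)+\pair{Lx}{y^*}$ plus constants, so subdifferentiating and invoking the sum rule (Lemma~\ref{l:1}\ref{l:1ii}, under~\ref{a:1iii}) and the chain rule for the linear term gives $\partial(\xi^*\rocky f)(x)+\partial h(x)+L^*y^*$. The $\xi^*$-block reproduces $\partial\phi^*(\xi^*)-f(x)$ exactly as in Proposition~\ref{p:A}\ref{p:Aii}, and the $y^*$-block, via the same Fenchel-subdifferential argument used there, gives $\partial g^*(y^*)+\partial\ell^*(y^*)-Lx$ under~\ref{a:1ii} and Lemma~\ref{l:1}\ref{l:1ii}. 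The emptiness on the complementary region follows because $\mathscr{L}_F$ takes the value $\pm\infty$ there, exactly as in the proof of Proposition~\ref{p:A}\ref{p:Aii}.

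Finally, parts~\ref{t:1v} and~\ref{t:1vi} are the payoff. Since $F\in\Gamma_0(\XX\oplus\RR\oplus\YY)$ — which I would verify by assembling Proposition~\ref{p:1}\ref{p:1ii} for the $\Psi$-part, \eqref{e:trust} for $g\infconv\ell$, and the qualification hypotheses — Lemma~\ref{l:9}\ref{l:9ii} applies and gives $\zer\mathscr{K}_F\subset\mathscr{P}\times\mathscr{D}$, which is~\ref{t:1v}. For~\ref{t:1vi}, I expect the main obstacle: the \emph{reverse} inclusion, i.e., that every primal solution $x\in\mathscr{P}$ is captured by some $(\xi^*,y^*)$ with $(x,\xi^*,y^*)\in\zer\mathscr{K}_F$. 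Here I would not appeal to Lemma~\ref{l:9} (which only gives one direction) but instead leverage the inclusion characterization already proved in Proposition~\ref{p:14}\ref{p:14iii}, which writes $\mathscr{P}$ as a union over $\xi^*$ of zeros of $\mathscr{K}_\Psi+\boldsymbol{B}$. The task is then to match zeros of $\mathscr{K}_\Psi+\boldsymbol{B}$ in $\XX\oplus\RR$ with zeros of $\mathscr{K}_F$ in $\XX\oplus\RR\oplus\YY^*$ by exhibiting, from the parallel-sum inclusion $0\in L^*\big((\partial g\infconv\partial\ell)(Lx)\big)+\cdots$, a dual vector $y^*\in(\partial g^*(y^*)+\partial\ell^*(y^*))^{-1}$-type relation, i.e., unfolding \eqref{e:94} to produce the $y^*$ for which all three blocks of \eqref{e:kt} vanish. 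Concretely, $y^*\in(\partial g\infconv\partial\ell)(Lx)$ unpacks via \eqref{e:94} and Lemma~\ref{l:2}\ref{l:2ii} into $Lx\in\partial g^*(y^*)+\partial\ell^*(y^*)$, which is precisely the vanishing of the third block, while the first and second blocks transcribe the two inclusions from Proposition~\ref{p:14}. This correspondence, combined with~\ref{t:1v}, establishes the equality in~\ref{t:1vi}.
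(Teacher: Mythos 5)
Your proposal is correct and follows essentially the same route as the paper: the Lagrangian and Kuhn--Tucker computations split off the $\Psi$-part of Proposition~\ref{p:A} and handle the $(g\infconv\ell)\circ L+h$ part via Lemma~\ref{l:1} under the qualification conditions, part~\ref{t:1v} comes from Lemma~\ref{l:9}\ref{l:9ii}, and the reverse inclusion in~\ref{t:1vi} is obtained exactly as in the paper by combining Proposition~\ref{p:14}\ref{p:14iii} with the unfolding of the parallel sum via \eqref{e:94} and Lemma~\ref{l:2}\ref{l:2ii} to produce the dual vector $y^*$. Your explicit remark that $F\in\Gamma_0(\XX\oplus\RR\oplus\YY)$ must be checked before invoking Lemma~\ref{l:9} is a small point the paper leaves implicit, but it does not change the argument.
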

\begin{proof}
Note that assumptions~\ref{a:1i}--\ref{a:1iv} imply that 
$D=\dom f\cap\dom h\neq\emp$ and 
$V=\dom\phi^*\times(\dom g^*\cap\dom\ell^*)\neq\emp$.

\ref{t:1i}:
Let $(x,\xi^*,y^*)\in\XX\times\RR\times\YY^*$. In view of
\eqref{e:L}, 
\begin{equation}
\label{e:44}
\mathscr{L}_F(x,\xi^*,y^*)
=\inf_{(\xi,y)\in\RR\oplus\YY}\big(F(x,\xi,y)
-\xi\xi^*-\pair{y}{y^*}\big).
\end{equation}
If $x\notin D$, \eqref{e:F} implies 
that $F(x,\xi,y)=\pinf$ and, therefore, that
$\mathscr{L}_F(x,\xi^*,y^*)=\pinf$. Now suppose that 
$x\in D$. 
Then \eqref{e:44} and \eqref{e:F} yield
\begin{align}
\mathscr{L}_F(x,\xi^*,y^*)
&=h(x)+\inf_{\xi\in(\dom\phi)-f(x)}
\big(\phi(f(x)+\xi)-\xi\xi^*\big)+\inf_{y\in\YY}
\big((g\infconv\ell)(Lx+y)-\pair{y}{y^*}\big)\nonumber\\
&=\xi^*f(x)+h(x)+\pair{Lx}{y^*}-\sup_{f(x)+\xi\in\dom\phi}
\big((f(x)+\xi)\xi^*-\phi(f(x)+\xi)\big)\nonumber\\
&\hspace{53mm}-\sup_{y\in\YY}
\big(\pair{Lx+y}{y^*}-(g\infconv\ell)(Lx+y)\big)\nonumber\\
&=(\xi^*\rocky f)(x)+h(x)+\pair{Lx}{y^*}-\phi^*(\xi^*)-
(g\infconv\ell)^*(y^*)\nonumber\\
&=(\xi^*\rocky f)(x)+h(x)+\pair{Lx}{y^*}-\phi^*(\xi^*)-
g^*(y^*)-\ell^*(y^*).
\end{align}

\ref{t:1iii}: 
First, for every $(\xi^*,y^*)\in(\RR\times\YY^*)\smallsetminus V$,
we derive from \ref{t:1i} that 
$\sup_{x\in\XX}\big(-\mathscr{L}_F(x,\xi^*,y^*)\big)=\pinf$. Next,
for every $(\xi^*,y^*)\in V$, we derive from 
Lemma~\ref{l:1}\ref{l:1i} and
Lemma~\ref{l:66}\ref{l:66ii} that 
\begin{align}
\label{e:79}
\sup_{x\in\XX}\big(-\mathscr{L}_F(x,\xi^*,y^*)\big)&=
\phi^*(\xi^*)+
g^*(y^*)+\ell^*(y^*)+\sup_{x\in D}\big(\pair{x}{-L^*y^*}
-(h+\xi^* f)(x)\big)\nonumber\\
&=\phi^*(\xi^*)+
g^*(y^*)+\ell^*(y^*)+(h+\xi^*\rocky f)^*(-L^*y^*)\nonumber\\
&=\phi^*(\xi^*)+g^*(y^*)+\ell^*(y^*)+\big(h^*\infconv 
\widetilde{f^*}(\cdot,\xi^*)\big)(-L^*y^*).
\end{align}
According to \eqref{e:d}, the dual problem is
\begin{align}
\minimize{(\xi^*,y^*)\in\RR\times\YY^*}
{\sup_{x\in\XX}\big(-\mathscr{L}_F(x,\xi^*,y^*)\big)}
\end{align}
which, in view of \eqref{e:79}, is precisely \eqref{e:p2}.

\ref{t:1ii}: 
Let $(x,\xi^*,y^*)\in\XX\times\RR\times\YY^*$. If $x\notin D$, 
then it results from \ref{t:1i} that 
$\mathscr{L}_F(x,\xi^*,y^*)=\pinf$ and therefore 
$\mathscr{K}_F(x,\xi^*,y^*)=\emp$. If $(\xi^*,y^*)\notin V$, 
then $-\mathscr{L}_F (x,\xi^*,y^*)=\pinf$ and therefore 
$\mathscr{K}_F (x,\xi^*,y^*)=\emp$. Now suppose that
$x\in D$ and $(\xi^*,y^*)\in V$, and note that 
\begin{equation}
\mathscr{L}_F (x,\xi^*,y^*)=\mathscr{L}_{\Psi} (x,\xi^*)+h(x)+
\pair{x}{L^*y^*}-g^*(y^*)-\ell^*(y^*),
\end{equation}
where $\mathscr{L}_{\Psi}$ is defined in 
Proposition~\ref{p:A}\ref{p:Ai}. Therefore, 
assumption \ref{a:1iii} and Lemma~\ref{l:1}\ref{l:1ii} yield
\begin{equation}
\partial\big(\mathscr{L}_F (\cdot,\xi^*,y^*)\big)
=\partial\big(\mathscr{L}_{\Psi} (\cdot,\xi^*)\big)+\partial h+
L^*y^*,
\end{equation}
while assumption \ref{a:1ii} and Lemma~\ref{l:1}\ref{l:1ii} imply
that
\begin{equation}
\partial\big(-\mathscr{L}_F (x,\cdot,\cdot)\big)\colon (\xi^*,y^*)
\mapsto\partial\big(-\mathscr{L}_{\Psi} (x,\cdot)\big)(\xi^*)
\times\big(\partial g^*(y^*)+\partial\ell^*(y^*)-Lx\big).
\end{equation}
Thus, \eqref{e:kt} follows from \eqref{e:K} and 
Proposition~\ref{p:A}\ref{p:Aii}.

\ref{t:1v}: 
This follows from \ref{t:1ii} and Lemma~\ref{l:9}\ref{l:9ii}.

\ref{t:1vi}: Suppose that $x\in\mathscr{P}$. Then it follows from
Proposition~\ref{p:14}\ref{p:14iii} that there exists 
$\xi^*\in\RR$ such that 
\begin{equation}
\begin{cases}
0\in\partial(\xi^*\rocky f)(x)+ 
L^*\Big(\big((\partial g)\infconv(\partial\ell)\big)(Lx)\Big)
+\partial h(x)\\
0\in\partial\phi^*(\xi^*)-f(x).
\end{cases}
\end{equation}
Thus, Lemma~\ref{l:2}\ref{l:2ii} and \eqref{e:94} guarantee
the existence of $(\xi^*,y^*)\in\RR\times\YY^*$ such that 
\begin{equation}
\label{e:y2}
\begin{cases}
0\in\partial(\xi^*\rocky f)(x)+\partial h(x)+L^*y^*\\
0\in\partial\phi^*(\xi^*)-f(x)\\
0\in\partial g^*(y^*)+\partial\ell^*(y^*)-Lx,
\end{cases}
\end{equation}
which shows that $(x,\xi^*,y^*)\in\zer\mathscr{K}_F$.
Conversely, if $(x,\xi^*,y^*)\in\zer\mathscr{K}_F$, then
\ref{t:1ii} yields $x\in\mathscr{P}$.
\end{proof}

\begin{remark} 
In Theorem~\ref{t:1}, suppose that $\phi\colon\xi\mapsto\xi$,
$h=0$, and $\ell=\iota_{\{0\}}$. Then $\phi^*=\iota_{\{1\}}$, the
primal problem \eqref{e:p1} reduces to 
\begin{equation}
\label{e:flipper}
\minimize{x\in\XX}{f(x)+g(Lx)},
\end{equation}
the perturbation function \eqref{e:F} to
$(x,y)\mapsto f(x)+g(Lx+y)$,
the Lagrangian \eqref{e:L1} to 
\begin{equation}
(x,y^*)\mapsto 
\begin{cases}
\pinf,&\text{if}\;\;x\notin\dom f;\\
f(x)+\pair{Lx}{y^*}-g^*(y^*),&\text{if}\;\;
x\in\dom f\;\text{and}\;y^*\in\dom g^*;\\
\minf,&\text{if}\;\;
x\in\dom f\;\text{and}\;y^*\notin\dom g^*,
\end{cases}
\end{equation}
the dual problem \eqref{e:p2} to 
\begin{equation}
\minimize{y^*\in\YY^*}{f^*(-L^*y^*)+g^*(y^*)},
\end{equation}
and the Kuhn--Tucker operator \eqref{e:kt} to 
$(x,y^*)\mapsto\big(\partial f(x)+L^*y^*\big)\times
\big(\partial g^*(y^*)-Lx\big)$.
We thus recover the classical Lagrangian, Fenchel--Rockafellar
dual, and Kuhn--Tucker operator associated to \eqref{e:flipper}
\cite[Examples~11\,\&\,11']{Rock74}.
\end{remark}

\begin{remark}
In Theorem~\ref{t:1}, suppose that $g=\ell=0$. Then 
\eqref{e:p1} reduces to 
\begin{equation}
\label{e:botp}
\minimize{x\in\XX}{(\phi\circ f)(x)+h(x)}
\end{equation}
and the perturbation function \eqref{e:F} becomes
\begin{equation}
\label{e:pertbot}
(x,\xi)\mapsto 
\begin{cases}
\phi(f(x)+\xi)+h(x),&\text{if}\;\; f(x)+\xi\in\dom\phi;\\
\pinf, &\text{if}\;\; f(x)+\xi\notin\dom\phi.
\end{cases}
\end{equation}
In turn, the Lagrangian \eqref{e:L1} is
\begin{equation}
(x,\xi^*)\mapsto 
\begin{cases}
\pinf,&\text{if}\;\;x\notin\dom f\;\;\text{or}\;\;x\notin\dom h;\\
h(x)+(\xi^*\rocky f)(x)-\phi^*(\xi^*),&\text{if}\;\;
x\in\dom f\cap\dom h\;\text{and}\;\xi^*\in\dom \phi^*;\\
\minf,&\text{if}\;\;
x\in\dom f\cap\dom h\;\text{and}\;\xi^*\notin\dom \phi^*,
\end{cases}
\end{equation}
the dual problem \eqref{e:p2} is
\begin{equation}
\label{e:dualbot}
\minimize{\xi^*\in\RR}{\phi^*(\xi^*)+
\big(h^*\infconv\widetilde{f^*}
(\cdot,\xi^*)\big)(0)},
\end{equation}
and the Kuhn--Tucker operator \eqref{e:kt} is
\begin{equation}
(x,\xi^*)\mapsto
\begin{cases}
\big(\partial (\xi^*\rocky f)(x)+\partial 
h(x)\big)\times
\big(\partial \phi^*(\xi^*)-f(x)\big),
&\text{if}\;\; x\in\dom f\cap\dom 
h\;\;\text{and}\;\; \xi^*\in\dom\phi^*;\\
\emp,&\text{if}\;\;x\notin\dom f\cap\dom 
h\;\;\text{or}\;\; 
\xi^*\notin\dom\phi^*.
\end{cases}
\end{equation}
The perturbation function \eqref{e:pertbot} was introduced in
\cite{Thib94}. It is employed in \cite[Section~I.4]{Botr10} to
obtain \eqref{e:dualbot} and in \cite{Rock22} in the context of
augmented Lagrangian formulations.
\end{remark}

Our next topic is the splitting of the Kuhn--Tucker operator
of Theorem~\ref{t:1}\ref{t:1ii} into elementary components. This
decomposition will pave the way to the numerical solution of
Problem~\ref{prob:1}.

\begin{proposition}
\label{p:6}
Consider the setting of Problem~\ref{prob:1}, set 
$\XXX=\XX\oplus\RR\oplus\YY^*$, 
let $\mathscr{K}_{\Psi}$ be the Kuhn--Tucker
operator of Proposition~\ref{p:A}\ref{p:Aii}, 
and let $\mathscr{K}_F$ be 
the Kuhn--Tucker operator of Theorem~\ref{t:1}\ref{t:1ii}. Define
\begin{equation}
\label{e:xena}
\begin{cases}
\boldsymbol{M}\colon\XXX\to
2^{\XXX^*}\colon(x,\xi^*,y^*)\mapsto
\mathscr{K}_{\Psi}(x,\xi^*)\times\partial g^*(y^*)\\
\boldsymbol{S}\colon\XXX\to
\XXX^*\colon(x,\xi^*,y^*)\mapsto (L^*y^*,0,-Lx)\\
\boldsymbol{G}\colon\XXX\to
2^{\XXX^*}\colon(x,\xi^*,y^*)\mapsto\partial h(x)
\times\{0\}\times\partial\ell^*(y^*).
\end{cases}
\end{equation}
Then the following hold:
\begin{enumerate}
\item 
\label{p:6i-}
$\mathscr{K}_F=\boldsymbol{M}+\boldsymbol{S}+\boldsymbol{G}$.
\item 
\label{p:6ii}
$\boldsymbol{M}$ is maximally monotone.
\item 
\label{p:6ii+}
$\boldsymbol{S}$ is bounded, linear, and skew in the sense that
$(\forall\boldsymbol{x}\in\XXX)$ 
$\pair{\boldsymbol{x}}{\boldsymbol{S}\boldsymbol{x}}=0$. 
In addition, $\|\boldsymbol{S}\|=\|L\|$.
\item 
\label{p:6ii++}
$\boldsymbol{G}$ is maximally monotone.
\item 
\label{p:6ii+++}
Suppose that $h$ and $\ell^*$ are differentiable and that their
gradients are Lipschitzian with constants $\delta\in\RPP$ and
$\vartheta\in\RPP$, respectively. Then $\boldsymbol{G}$ is 
$\max\{\delta,\vartheta\}$-Lipschitzian and 
$\min\{1/\delta,1/\vartheta\}$-cocoercive.
\end{enumerate}
\end{proposition}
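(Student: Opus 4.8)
The plan is to dispatch the five assertions in order; apart from the cocoercivity in \ref{p:6ii+++}, each is either a coordinatewise computation or an application of facts already in hand.

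For \ref{p:6i-} I would add the three operators of \eqref{e:xena} coordinate by coordinate at an arbitrary $(x,\xi^*,y^*)\in\XXX$. Inserting the expression for $\mathscr{K}_{\Psi}$ from Proposition~\ref{p:A}\ref{p:Aii}, the first coordinate of $\boldsymbol{M}+\boldsymbol{S}+\boldsymbol{G}$ is $\partial(\xi^*\rocky f)(x)+L^*y^*+\partial h(x)$, the second is $\partial\phi^*(\xi^*)-f(x)$, and the third is $\partial g^*(y^*)-Lx+\partial\ell^*(y^*)$; these are exactly the three factors of \eqref{e:kt}. The only delicate point is to match the emptiness conventions. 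On the one hand, $\boldsymbol{M}$ is empty-valued as soon as $x\notin\dom f$ or $\xi^*\notin\dom\phi^*$ (Proposition~\ref{p:A}\ref{p:Aii}) or $\partial g^*(y^*)=\emp$, and a Minkowski sum is empty as soon as one summand is; on the other hand, \eqref{e:kt} is empty unless $x\in D$ and $(\xi^*,y^*)\in V$. Using that $\dom\partial u\subset\dom u$ for each function $u$ involved, the two emptiness regions match, and on their common complement both sides reduce to the product of the three coordinatewise sums computed above.

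For \ref{p:6ii} and \ref{p:6ii++} I would invoke the standard fact that the direct sum of maximally monotone operators over the factors of a product space is maximally monotone. In $\boldsymbol{M}$ the two blocks are $\mathscr{K}_{\Psi}$, maximally monotone by Proposition~\ref{p:A}\ref{p:Av}, and $\partial g^*$, maximally monotone because $g^*\in\Gamma_0(\YY^*)$ (Lemma~\ref{l:2}\ref{l:2i}) and subdifferentials of $\Gamma_0$ functions are maximally monotone (Lemma~\ref{l:2}\ref{l:2iii}). In $\boldsymbol{G}$ the three blocks are $\partial h$, the operator $\xi^*\mapsto\{0\}$ (the subdifferential of the zero function on $\RR$), and $\partial\ell^*$ (using $\ell^*\in\Gamma_0(\YY^*)$), each maximally monotone by the same lemma. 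Assertion \ref{p:6ii+} is immediate: linearity and boundedness of $\boldsymbol{S}$ are clear from its definition; skewness follows by expanding $\pair{(x,\xi^*,y^*)}{\boldsymbol{S}(x,\xi^*,y^*)}=\pair{x}{L^*y^*}-\pair{Lx}{y^*}$ and cancelling the two terms via the adjoint identity $\pair{x}{L^*y^*}=\pair{Lx}{y^*}$; and, since $\|\boldsymbol{S}(x,\xi^*,y^*)\|^2=\|L^*y^*\|^2+\|Lx\|^2$ and $\|L^*\|=\|L\|$, one gets $\|\boldsymbol{S}\|\le\|L\|$, while testing on vectors $(x,0,0)$ gives $\|\boldsymbol{S}\|\ge\sup_{\|x\|\le1}\|Lx\|=\|L\|$, hence equality.

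Finally, in \ref{p:6ii+++} differentiability turns $\partial h$ and $\partial\ell^*$ into the single-valued maps $\nabla h$ and $\nabla\ell^*$, so that $\boldsymbol{G}\colon(x,\xi^*,y^*)\mapsto(\nabla h(x),0,\nabla\ell^*(y^*))$. The Lipschitz bound is read off from $\|\boldsymbol{G}\boldsymbol{x}_1-\boldsymbol{G}\boldsymbol{x}_2\|^2=\|\nabla h(x_1)-\nabla h(x_2)\|^2+\|\nabla\ell^*(y_1^*)-\nabla\ell^*(y_2^*)\|^2$, dominated by $\max\{\delta,\vartheta\}^2\|\boldsymbol{x}_1-\boldsymbol{x}_2\|^2$. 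For the cocoercivity I would apply the Baillon--Haddad theorem to $h$ and to $\ell^*$ to obtain that $\nabla h$ is $1/\delta$-cocoercive and $\nabla\ell^*$ is $1/\vartheta$-cocoercive, then add the two resulting inequalities (the middle coordinate contributing nothing) and bound both constants below by $\min\{1/\delta,1/\vartheta\}$. I expect this last point to be the main obstacle, as it is the only step resting on a genuine theorem rather than a direct computation, and the only one where the precise constant must be tracked.
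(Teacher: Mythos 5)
Your proposal is correct and follows essentially the same route as the paper: coordinatewise matching of the formulas for \ref{p:6i-}, maximal monotonicity of the blocks for \ref{p:6ii} and \ref{p:6ii++}, the direct norm computation for \ref{p:6ii+}, and the Baillon--Haddad theorem for \ref{p:6ii+++}. The only (immaterial) differences are that the paper establishes \ref{p:6ii++} and \ref{p:6ii+++} by viewing $\boldsymbol{G}$ as $\partial\boldsymbol{h}$ for the single function $\boldsymbol{h}\colon(x,\xi^*,y^*)\mapsto h(x)+\ell^*(y^*)$ on $\XXX$ and applying Baillon--Haddad once to $\boldsymbol{h}$, whereas you work blockwise and sum the two cocoercivity inequalities --- both yield the same constants --- and that your explicit check of the emptiness conventions in \ref{p:6i-} via $\dom\partial u\subset\dom u$ is more detailed than the paper's one-line ``combine the formulas.''
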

\begin{proof}
Define 
\begin{equation}
\label{e:defh}
\boldsymbol{h}\colon\XXX\to\RX\colon(x,\xi^*,y^*)
\mapsto h(x)+\ell^*(y^*).
\end{equation}

\ref{p:6i-}: Combine \eqref{e:Akt}, \eqref{e:kt}, and
\eqref{e:xena}.

\ref{p:6ii}: 
As seen in Proposition~\ref{p:A}\ref{p:Av}, $\mathscr{K}_{\Psi}$
is maximally monotone. Therefore, it follows from
Lemma~\ref{l:2}\ref{l:2i} and Lemma~\ref{l:2}\ref{l:2iii} that
$\boldsymbol{M}$ is maximally monotone. 

\ref{p:6ii+}: 
The first three claims are clear. Now let
$\boldsymbol{x}=(x,\xi^*,y^*)\in\XXX$. Then \eqref{e:n} yields
\begin{equation}
\|\boldsymbol{S}\boldsymbol{x}\|^2=\|L^*y^*\|^2+\|Lx\|^2\leq
\|L\|^2\|\boldsymbol{x}\|^2,
\end{equation}
which shows that $\|\boldsymbol{S}\|\leq\|L\|$.
On the other hand, suppose that $\|x\|\leq 1$ and that 
$(\xi^*,y^*)=(0,0)$. Then
$\|Lx\|=\|\boldsymbol{S}\boldsymbol{x}\|\leq\|\boldsymbol{S}\|$
and, therefore, $\|L\|\leq \|\boldsymbol{S}\|$.

\ref{p:6ii++}: Since $\boldsymbol{h}\in\Gamma_0(\XXX)$, 
Lemma~\ref{l:2}\ref{l:2iii} asserts that 
$\boldsymbol{G}=\partial\boldsymbol{h}$ is maximally monotone.

\ref{p:6ii+++}: By \eqref{e:defh}, $\boldsymbol{h}$ is 
differentiable and $\boldsymbol{G}=\nabla\boldsymbol{h}\colon 
(x,\xi^*,y^*)\mapsto(\nabla h(x),0,\nabla\ell^*(y^*))$ 
is $\max\{\delta,\vartheta\}$-Lipschitzian, which makes it
$\min\{1/\delta,1/\vartheta\}$-cocoercive by
\cite[Corollaire~10]{Bail77}.
\end{proof}

\section{Proximal analysis and solution methods}
\label{sec:4}

We turn our attention to the design of algorithms for solving the
primal problem \eqref{e:p1} and its dual \eqref{e:p2} in the case
when $\XX$ and $\YY$ are real Hilbert spaces.
Theorem~\ref{t:1}\ref{t:1v} opens a path towards this objective by
seeking a zero of the Kuhn--Tucker operator $\mathscr{K}_F$ of
\eqref{e:kt}, i.e., in view of Proposition~\ref{p:6}\ref{p:6i-}, of
the sum of the maximally monotone operators $\boldsymbol{M}$,
$\boldsymbol{S}$, and $\boldsymbol{G}$. This can be achieved by
splitting methods which activate these operators separately and
involve resolvents. 

\subsection{Resolvent computations}
\label{sec:41}
We start with the computation of the resolvent of the Kuhn--Tucker
operator $\mathscr{K}_{\Psi}$.

\begin{proposition}
\label{p:71}
Let $\XX$ be a real Hilbert space, let $f\in\Gamma_0(\XX)$, and let
$\phi\in\Gamma_0(\RR)$ be a nonconstant increasing function such
that $(\inte\dom\phi)\cap f(\dom f)\neq\emp$. 
Let $\mathscr{K}_{\Psi}$ be the Kuhn--Tucker operator defined in
Proposition~\ref{p:A}\ref{p:Aii}, let $x\in\XX$, let $\xi^*\in\RR$,
and let $\gamma\in\RPP$. Then there exists a unique real number
$\omega\in\RP$ such that 
\begin{equation}
\label{e:o}
\omega=\prox_{\gamma\phi^*}\big(\xi^*+
\gamma f(\prox_{\gamma(\omega\rocky f)}x)\big).
\end{equation}
Moreover,
\begin{equation}
\label{e:7ii}
J_{\gamma\mathscr{K}_{\Psi}}(x,\xi^*)=
\big(\prox_{\gamma(\omega\rocky f)}x,\omega\big).
\end{equation}
\end{proposition}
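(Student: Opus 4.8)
The plan is to read off the resolvent directly from its definition $J_{\gamma\mathscr{K}_{\Psi}}=(\Id+\gamma\mathscr{K}_{\Psi})^{-1}$ and to unravel the resulting inclusion, using the explicit form \eqref{e:Akt} of $\mathscr{K}_{\Psi}$, into a pair of coupled proximal equations, one of which collapses to the scalar fixed-point equation \eqref{e:o}. Fixing $(x,\xi^*)$, I would seek $(u,\omega)\in\XX\oplus\RR$ with $(u,\omega)=J_{\gamma\mathscr{K}_{\Psi}}(x,\xi^*)$, i.e.
\begin{equation}
(x-u,\xi^*-\omega)\in\gamma\mathscr{K}_{\Psi}(u,\omega).
\end{equation}
By Proposition~\ref{p:A}\ref{p:Aii}, the right-hand side is nonempty only if $u\in\dom f$ and $\omega\in\dom\phi^*$, and in that case \eqref{e:Akt} splits the inclusion into
\begin{equation}
x-u\in\gamma\partial(\omega\rocky f)(u)
\quad\text{and}\quad
\xi^*-\omega\in\gamma\big(\partial\phi^*(\omega)-f(u)\big).
\end{equation}
The first membership is equivalent to $u=(\Id+\gamma\partial(\omega\rocky f))^{-1}x=\prox_{\gamma(\omega\rocky f)}x$, which is legitimate since $\omega\rocky f\in\Gamma_0(\XX)$ by Lemma~\ref{l:66}\ref{l:66i}; the second is equivalent to $\xi^*+\gamma f(u)\in(\Id+\gamma\partial\phi^*)(\omega)$, that is, $\omega=\prox_{\gamma\phi^*}(\xi^*+\gamma f(u))$. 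Substituting the expression for $u$ into the equation for $\omega$ produces exactly \eqref{e:o}, and reading off $u$ yields \eqref{e:7ii}.

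It then remains to settle existence and uniqueness of $\omega$, and here I would lean on the structural results already in hand rather than on a direct scalar analysis. Since $\mathscr{K}_{\Psi}$ is maximally monotone by Proposition~\ref{p:A}\ref{p:Av} and $\XX\oplus\RR$ is Hilbertian, $\gamma\mathscr{K}_{\Psi}$ is maximally monotone and $J_{\gamma\mathscr{K}_{\Psi}}$ is single-valued with full domain; hence the pair $(u,\omega)=J_{\gamma\mathscr{K}_{\Psi}}(x,\xi^*)$ exists and is unique. By Proposition~\ref{p:A}\ref{p:Aiv}, $(u,\omega)\in\dom\mathscr{K}_{\Psi}\subset\dom f\times\RP$, so that $u\in\dom f$ (whence $f(u)\in\RR$) and $\omega\in\dom\phi^*\subset\RP$ via Lemma~\ref{l:3}\ref{l:3ii}. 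The equivalences above show that a number $\omega$ solves \eqref{e:o} if and only if $(\prox_{\gamma(\omega\rocky f)}x,\omega)=J_{\gamma\mathscr{K}_{\Psi}}(x,\xi^*)$, so uniqueness of the resolvent value transfers verbatim to uniqueness of $\omega\in\RP$.

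The step that I expect to require the most care is the finiteness of the quantity $f(\prox_{\gamma(\omega\rocky f)}x)$ appearing inside \eqref{e:o}. For $\omega>0$ one has $\prox_{\gamma(\omega\rocky f)}x=\prox_{\gamma\omega f}x\in\dom f$, but for $\omega=0$ the proximity operator reduces to $\proj_{\cdom f}x$, which need not lie in $\dom f$, so a priori the argument of $\prox_{\gamma\phi^*}$ in \eqref{e:o} could be $\pinf$. The resolvent viewpoint disposes of this cleanly, because Proposition~\ref{p:A}\ref{p:Aiv} forces the first coordinate of the (unique) resolvent value into $\dom f$, guaranteeing $f(u)\in\RR$ at the actual solution; I would therefore keep the maximal-monotonicity argument as the backbone and simply record that, at the solution, $\prox_{\gamma(\omega\rocky f)}x\in\dom f$.

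A self-contained alternative, which I would mention but not pursue, sets $\Theta(\omega)=\prox_{\gamma\phi^*}(\xi^*+\gamma f(\prox_{\gamma(\omega\rocky f)}x))$ and shows that $\Theta$ is nonincreasing on $\RP$, being the composition of the nondecreasing scalar map $\prox_{\gamma\phi^*}$ with the nonincreasing map $\omega\mapsto f(\prox_{\gamma\omega f}x)$; then $\omega\mapsto\Theta(\omega)-\omega$ is strictly decreasing, nonnegative at $\omega=0$, and tends to $\minf$, hence has a unique zero. The delicate point in that route is precisely the continuity and monotonicity of $\omega\mapsto f(\prox_{\gamma(\omega\rocky f)}x)$ across $\omega=0$, which is exactly the difficulty that the operator-theoretic argument bypasses.
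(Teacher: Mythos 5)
Your proposal is correct and follows essentially the same route as the paper: maximal monotonicity of $\mathscr{K}_{\Psi}$ gives a single-valued, everywhere-defined resolvent, and unravelling $(x,\xi^*)\in(p,\omega)+\gamma\mathscr{K}_{\Psi}(p,\omega)$ via \eqref{e:Akt} yields exactly the two coupled proximal identities, with existence and uniqueness of $\omega$ inherited from the resolvent. Your extra remark on the finiteness of $f(\prox_{\gamma(0\rocky f)}x)$ is a sensible point of care that the paper's proof handles implicitly through the constraint $\dom\mathscr{K}_{\Psi}\subset\dom f\times\RP$, but it does not change the argument.
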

\begin{proof}
We deduce from Proposition~\ref{p:A}\ref{p:Av} and
\cite[Proposition~23.8]{Livre1} that
$J_{\mathscr{K}_{\Psi}}$ is single-valued on $\XX\times\RR$.
Now let $(p,\omega)\in\XX\times\RR$. Then, by \eqref{e:Akt},
Proposition~\ref{p:A}\ref{p:Aiv}, and Lemma~\ref{l:66}\ref{l:66i},
\begin{eqnarray}
\label{e:incres}
(p,\omega)=J_{\gamma\mathscr{K}_{\Psi}}(x,\xi^*)
&\Leftrightarrow&
(x,\xi^*)\in 
(p,\omega)+\gamma\mathscr{K}_{\Psi}(p,\omega)\nonumber\\
&\Leftrightarrow&
\begin{cases}
p\in\dom f\;\text{and}\;\omega\in\RP\\
x\in p+\gamma\partial(\omega\rocky f)(p)\\
\xi^*\in\omega+\gamma\partial\phi^*(\omega)-\gamma f(p)
\end{cases}\nonumber\\
&\Leftrightarrow&
\begin{cases}
p\in\dom f\;\text{and}\;\omega\in\RP\\
p=\prox_{\gamma(\omega\rocky f)}x\\
\omega=\prox_{\gamma\phi^*}\big(\xi^*+\gamma f(p)\big),
\end{cases}
\end{eqnarray}
which proves the assertion.
\end{proof}

\begin{remark}
\label{r:K} 
In the setting of Proposition~\ref{p:71}, let us provide details on
the computation of the unique solution $\omega\in\RP$ to
\eqref{e:o} and of $J_{\gamma\mathscr{K}_{\Psi}}(x,\xi^*)$. These
computations can be performed by testing the membership of
$\xi^*/\gamma+f(\proj_{\cdom f}x)$ in $\Argmin\phi$. Using 
\cite[Proposition~12.29 and Theorem~14.3(ii)]{Livre1} as well as 
Lemma~\ref{l:66}\ref{l:66iv}, we first observe that
\begin{align}
\label{e:e45}
\xi^*/\gamma+f(\proj_{\cdom f}x)\in\Argmin\phi\quad 
&\Leftrightarrow\quad 
\xi^*/\gamma+f(\proj_{\cdom f}x)\in\Argmin\big(\phi/\gamma\big)
\nonumber\\
&\Leftrightarrow\quad 
\big(\Id-\prox_{\phi/\gamma}\big)\big(\xi^*/\gamma+
f(\proj_{\cdom f}x)\big)=0\nonumber\\
&\Leftrightarrow\quad 
\gamma^{-1}\,\prox_{\gamma\phi^*}\big(\xi^*+
\gamma f(\proj_{\cdom f}x)\big)=0\nonumber\\
&\Leftrightarrow\quad 
\prox_{\gamma\phi^*}\big(\xi^*+
\gamma f(\prox_{\gamma(0\rocky f)}x)\big)=0\nonumber\\
&\Leftrightarrow\quad 
\omega=0.
\end{align}
\begin{itemize}
\item 
\label{r:Ki} 
{\bfseries Case 1:} 
$\xi^*/\gamma+f(\proj_{\cdom f}x)\in\Argmin\phi$.
Then \eqref{e:e45} implies that $\omega=0$ and therefore 
\eqref{e:7ii} 
yields
\begin{equation}
\label{e:J1}
J_{\gamma\mathscr{K}_{\Psi}}(x,\xi^*)=
\big(\proj_{\cdom f}x,0\big).
\end{equation}
\item 
\label{r:Ki2} 
{\bfseries Case 2:} $
\xi^*/\gamma+f(\proj_{\cdom f}x)\notin\Argmin\phi$.
Then it follows from \eqref{e:e45} that $\omega>0$.
Now define
\begin{equation}
\label{e:defT}
T\colon\RPP\to\RR\colon\tau\mapsto\tau-
\prox_{\gamma\phi^*}\big(\xi^*+\gamma f
(\prox_{\gamma\tau f}x)\big).
\end{equation}
On the one hand, we deduce from \cite[Lemma~3.27]{Atto84} and
\cite[Propositions~12.27 and 24.31]{Livre1} that
$T$ is continuous and strictly increasing. On the other hand,
\eqref{e:o} and \eqref{e:66} imply that 
$\omega\in\zer T$, hence $\zer T=\{\omega\}$. Thus, 
$\omega$ can be computed by standard one-dimensional root-finding
numerical schemes \cite[Chapter~9]{Pres07}. In turn, \eqref{e:7ii}
yields
\begin{equation}
\label{e:J2}
J_{\gamma\mathscr{K}_{\Psi}}(x,\xi^*)=
\big(\prox_{\gamma\omega f}x,\omega\big).
\end{equation}
\end{itemize}
\end{remark}

In the following examples, we compute the resolvent
$J_{\gamma\mathscr{K}_{\Psi}}$ of Proposition~\ref{p:71} with the
help of Remark~\ref{r:K}.

\begin{example}
\label{ex:1b}
In Example~\ref{ex:1}, suppose that $\XX$ is a real
Hilbert space, let $\gamma\in\RPP$, let $(x,\xi^*)\in\XX\times\RR$,
and let $\omega$ be the unique real number in $\RP$ such that
\begin{equation}
\label{e:scalarineq}
\omega=
\begin{cases}
0,&\text{if}\;\;\xi^*+\gamma f(\proj_{\cdom f}x)\leq 0;\\
\xi^*+\gamma f\big(\prox_{\gamma\omega f}x\big),&\text{if}\;\;
\xi^*+\gamma f(\proj_{\cdom f}x)>0.
\end{cases}
\end{equation}
Since $\phi^*=\iota_{\RP}$ and $\Argmin\phi=\RM$, 
\eqref{e:J1} and \eqref{e:J2} yield
\begin{equation}
\label{e:cconst}
J_{\gamma \mathscr{K}_{\Psi}}(x,\xi^*)
=\begin{cases}
(\proj_{\cdom f}x,0),
&\text{if}\;\;\xi^*+\gamma f(\proj_{\cdom f}x)\leq 0;\\
\big(\prox_{\gamma\omega f}x,\omega\big),
&\text{if}\;\;\xi^*+\gamma f(\proj_{\cdom f}x)>0.
\end{cases}
\end{equation}
\end{example}

\begin{example}
\label{ex:3b}
In Example~\ref{ex:3}, set $\alpha\colon\rho\mapsto\rho$, let
$\gamma\in\RPP$, let $(x,\xi^*)\in\XX\times\RR$, and let $\omega$
be the unique real number in $\RP$ such that
\begin{equation}
\omega=
\begin{cases}
0,&\text{if}\;\;(\xi^*/\gamma+f(\proj_{\cdom f}x))/\rho\in
\Argmin\theta;\\
\prox_{\gamma\rho\theta^*}\big(\xi^*+
\gamma f(\prox_{\gamma\omega f}x)\big),
&\text{if}\;\;(\xi^*/\gamma+f(\proj_{\cdom f}x))/\rho\notin
\Argmin\theta.
\end{cases}
\end{equation}
Then, by \cite[Proposition~13.23(ii)]{Livre1},
$\phi^*=\rho\theta^*$, $\Argmin\phi=\rho\,\Argmin\theta$, and it 
follows from \eqref{e:J1} and \eqref{e:J2} that 
\begin{equation}
\label{e:cconst2}
J_{\gamma \mathscr{K}_{\Psi}}(x,\xi^*)
=\begin{cases}
(\proj_{\cdom f}x,0),
&\text{if}\;\;(\xi^*/\gamma+f(\proj_{\cdom f}x))/\rho\in
\Argmin\theta;\\
\big(\prox_{\gamma\omega f}x,\omega\big),
&\text{if}\;\;(\xi^*/\gamma+f(\proj_{\cdom f}x))/\rho\notin
\Argmin\theta.
\end{cases}
\end{equation}
\end{example}

\begin{example}
\label{ex:2b}
Consider the setting of Example~\ref{ex:2}, where we suppose that 
$\XX$ is a real Hilbert space and that $\theta$ is not of the form
$\theta=\iota_{\{0\}}+\nu$ for some $\nu\in\RR$. 
Set $\rho=\sup\partial\theta(0)$,
let $\gamma\in\RPP$, and let $(x,\xi^*)\in\XX\times\RR$. Then
$\rho\in\RPX$, $\phi=\theta\circ d_{\RM}$, $\Argmin\phi=\RM$, 
and, since $\theta$ is even, \cite[Example~13.26]{Livre1} yields
$\gamma\phi^*=\gamma(\sigma_{\RM}+\theta^*\circ|\cdot|)
=\sigma_{\RM}+\gamma\theta^*\circ|\cdot|$. In turn, since 
$\theta^*$ is not constant, it follows from 
\cite[Proposition~2.2]{Nmtm09} that
\begin{equation}
\label{e:jjm1}
\prox_{\gamma\phi^*}\xi^*=
\begin{cases}
0,&\text{if}\;\;\xi^*\leq 0;\\
\xi^*,&\text{if}\;\;0<\xi^*\leq\rho;\\
\prox_{\gamma\theta^*}\xi^*,&\text{if}\;\;\xi^*>\rho.
\end{cases}
\end{equation}
Now let $\omega$ be the unique real number in $\RP$ such that
\begin{equation}
\label{e:jjm2}
\omega=
\begin{cases}
0,&\text{if}\;\;\xi^*+\gamma f(\proj_{\cdom f}x)\leq 0;\\
\prox_{\gamma\phi^*}\big(\xi^*+\gamma f(\prox_{\gamma\omega f}x)
\big),&\text{if}\;\;\xi^*+\gamma f(\proj_{\cdom f}x)>0.
\end{cases}
\end{equation}
Then \eqref{e:J1} and \eqref{e:J2} yield 
\begin{equation}
J_{\gamma \mathscr{K}_{\Psi}} (x,\xi^*)
=\begin{cases}
\big(\proj_{\cdom f}x,0\big),
&\text{if}\;\;\xi^*+\gamma f(\proj_{\cdom f}x)\leq 0;\\
\big(\prox_{\gamma\omega f}x,\omega\big),
&\text{if}\;\;\xi^*+\gamma f(\proj_{\cdom f}x)>0.
\end{cases}
\end{equation}
\end{example}

We now turn to the computation of the resolvents of
$\boldsymbol{M}$, $\boldsymbol{S}$, and $\boldsymbol{G}$
in \eqref{e:xena}.

\begin{proposition}
\label{p:7}
In Proposition~\ref{p:6}, suppose that $\XX$ and $\YY$ are real
Hilbert spaces identified with their duals. Let $x\in\XX$,
$\xi^*\in\RR$, $y^*\in\YY$, and $\gamma\in\RPP$. Then the following
hold: 
\begin{enumerate}
\item 
\label{p:7i}
$J_{\gamma\boldsymbol{M}}(x,\xi^*,y^*)=
(J_{\gamma\mathscr{K}_{\Psi}}(x,\xi^*),\prox_{\gamma g^*}y^*)$.
\item
\label{p:7iv}
$J_{\gamma\boldsymbol{S}}(x,\xi^*,y^*)=
((\Id+\,\gamma^2L^*L)^{-1}(x-\gamma L^*y^*),\xi^*,
(\Id+\,\gamma^2LL^*)^{-1}(y^*+\gamma Lx))$.
\item 
\label{p:7iii}
$J_{\gamma\boldsymbol{G}}(x,\xi^*,y^*)=
\big(\prox_{\gamma h}x,\xi^*,\prox_{\gamma \ell^*}y^*\big)$.
\end{enumerate}
\end{proposition}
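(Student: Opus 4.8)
The plan is to exploit the maximal monotonicity of $\boldsymbol{M}$, $\boldsymbol{S}$, and $\boldsymbol{G}$ established in Proposition~\ref{p:6}\ref{p:6ii}, \ref{p:6ii+}, and~\ref{p:6ii++}, which (since $\XX$ and $\YY$ are Hilbertian and identified with their duals, so that $\XXX$ is identified with $\XXX^*$) guarantees that each of $J_{\gamma\boldsymbol{M}}$, $J_{\gamma\boldsymbol{S}}$, and $J_{\gamma\boldsymbol{G}}$ is single-valued and everywhere defined on $\XXX$. In each case I would simply write $(p,\sigma,q)=J_{\gamma\boldsymbol{A}}(x,\xi^*,y^*)$ and unfold the defining inclusion $(x,\xi^*,y^*)\in(p,\sigma,q)+\gamma\boldsymbol{A}(p,\sigma,q)$ for the relevant operator $\boldsymbol{A}$ from \eqref{e:xena}.

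For \ref{p:7i} and \ref{p:7iii}, the key observation is that $\boldsymbol{M}$ and $\boldsymbol{G}$ are \emph{block-diagonal} on the product space: $\boldsymbol{M}$ couples $(x,\xi^*)$ only through $\mathscr{K}_{\Psi}$ and acts on $y^*$ through $\partial g^*$, while $\boldsymbol{G}$ acts on $x$, $\xi^*$, and $y^*$ separately through $\partial h$, the zero operator, and $\partial\ell^*$. Since $(\Id+\gamma\boldsymbol{A})$ then acts componentwise, its inverse does too, so the resolvent of such an operator is obtained by resolving each block independently. This gives $J_{\gamma\boldsymbol{M}}=J_{\gamma\mathscr{K}_{\Psi}}\times J_{\gamma\partial g^*}$, and we conclude with $J_{\gamma\partial g^*}=\prox_{\gamma g^*}$; likewise $J_{\gamma\boldsymbol{G}}=\prox_{\gamma h}\times\Id\times\prox_{\gamma\ell^*}$, using $J_{\gamma\partial h}=\prox_{\gamma h}$, $J_{\gamma\partial\ell^*}=\prox_{\gamma\ell^*}$, and the fact that the resolvent of the zero operator on $\RR$ is $\Id$.

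For \ref{p:7iv}, unfolding $(x,\xi^*,y^*)=(p,\sigma,q)+\gamma\boldsymbol{S}(p,\sigma,q)$ with $\boldsymbol{S}\colon(p,\sigma,q)\mapsto(L^*q,0,-Lp)$ yields the three relations $x=p+\gamma L^*q$, $\xi^*=\sigma$, and $y^*=q-\gamma Lp$. The middle relation gives $\sigma=\xi^*$ at once. Substituting $q=y^*+\gamma Lp$ from the third relation into the first produces $(\Id+\gamma^2L^*L)p=x-\gamma L^*y^*$, and substituting $p=x-\gamma L^*q$ from the first into the third produces $(\Id+\gamma^2LL^*)q=y^*+\gamma Lx$. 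Because $L^*L$ and $LL^*$ are positive and self-adjoint, both $\Id+\gamma^2L^*L$ and $\Id+\gamma^2LL^*$ are strongly monotone, hence boundedly invertible, and inverting gives the announced closed forms for $p$ and $q$.

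The computations are essentially routine once the block structure is recognized; the only point requiring mild care is \ref{p:7iv}, where one must justify the invertibility of $\Id+\gamma^2L^*L$ and $\Id+\gamma^2LL^*$ and perform the elimination consistently, so that the two scalar relations solved in $p$ and in $q$ are mutually compatible. This compatibility is ensured by the skewness of $\boldsymbol{S}$ recorded in Proposition~\ref{p:6}\ref{p:6ii+}, which makes $\Id+\gamma\boldsymbol{S}$ a bijection with bounded inverse.
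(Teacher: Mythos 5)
Your proposal is correct and follows essentially the same route as the paper: the paper proves \ref{p:7i} and \ref{p:7iii} by invoking the standard fact that the resolvent of a block-diagonal maximally monotone operator on a product space factors componentwise (\cite[Proposition~23.18]{Livre1}), and \ref{p:7iv} by citing the known resolvent formula for the skew operator $(x,y)\mapsto(L^*y,-Lx)$ (\cite[Example~23.5]{Livre1}), which is exactly the elimination you carry out by hand. Your explicit verification of the invertibility of $\Id+\gamma^2L^*L$ and $\Id+\gamma^2LL^*$ and of the single-valuedness of the resolvents via maximal monotonicity supplies precisely the content of those citations.
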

\begin{proof}
\ref{p:7i}: This follows from Proposition~\ref{p:6}\ref{p:6ii}, 
Lemma~\ref{l:9}\ref{l:9i}, and \cite[Proposition~23.18]{Livre1}.

\ref{p:7iv}: This follows from \cite[Example~23.5]{Livre1}.

\ref{p:7iii}: This follows from \cite[Proposition~23.18]{Livre1}.
\end{proof}

\subsection{Algorithms}
\label{sec:42}
As an application of Theorem~\ref{t:1}\ref{t:1v},
Proposition~\ref{p:6}, and Proposition~\ref{p:7}, we now design
algorithms for solving the primal problem \eqref{e:p1} and its dual
\eqref{e:p2}. Several options can be considered to find a zero of
$\mathscr{K}_F=\boldsymbol{M}+\boldsymbol{S}+\boldsymbol{G}$
depending on the assumptions on the constituents of the problem. In
the following approach, we adopt a strategy based on the
forward-backward-half forward splitting method of \cite{Siop18}.

\begin{notation}
\label{n:K}
In the setting of Proposition~\ref{p:71}, we denote by
$\mm(\gamma,x,\xi^*)$ the unique real number $\omega\in\RP$ such 
that $\omega=\prox_{\gamma\phi^*}(\xi^*+\gamma 
f(\prox_{\gamma(\omega\rocky f)}x))$.
\end{notation}

\begin{proposition}
\label{p:2}
Consider the setting of Problem~\ref{prob:1}, let $\mathscr{D}$
be the set of solutions to the dual problem \eqref{e:p2}, and 
define $\mm$ as in Notation~\ref{n:K}. Suppose that $\XX$ and $\YY$
are real Hilbert spaces, that $\mathscr{P}\neq\emp$, that $h$ and
$\ell^*$ are differentiable, and that $\nabla h$ and $\nabla\ell^*$
are $\delta$- and $\vartheta$-Lipschitzian, respectively, for some
$(\delta,\vartheta)\in\RPP^2$. Let 
$(x_0,\xi_0^*,y^*_0)\in\XX\times\RR\times\YY$, 
let $\chi=4\max\{\delta,\vartheta\}/(1+\sqrt{1+
16\max\{\delta,\vartheta\}^2\|L\|^2})$,
let $\varepsilon\in\left]0,\chi/2\right[$,
let $(\gamma_n)_{n\in\NN}$ be a sequence in
$[\varepsilon,\chi-\varepsilon]$,
and iterate
\begin{equation}
\label{e:algo1}
\begin{array}{l}
\text{for}\;n=0,1,\ldots\\
\left\lfloor
\begin{array}{l}
z_n=x_n-\gamma_n\big(L^*y^*_n+\nabla h(x_n)\big)\\
z_n^*=y^*_n+\gamma_n\big(Lx_n-\nabla \ell^*(y^*_n)\big)\\
\xi^*_{n+1}=\mm(\gamma_n,z_n,\xi_n^*)\\
p_n=\prox_{\gamma_n(\xi^*_{n+1}\rocky f)}{z}_{n}\\
p_n^*=\prox_{\gamma_ng^*}z_n^*\\
x_{n+1}=p_n+\gamma_n L^*(y_n^*-p_n^*)\\
y^*_{n+1}=p_n^*-\gamma_n L(x_n-p_n).
\end{array}
\right.\\[2mm]
\end{array}
\end{equation}
Then $x_n\weakly\overline{x}$, $y^*_n\weakly\overline{y^*}$, and 
$\xi_n^*\to\overline{\xi^*}$, where $\overline{x}\in\mathscr{P}$
and $(\overline{\xi^*},\overline{y^*})\in\mathscr{D}$.
\end{proposition}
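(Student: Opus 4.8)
The plan is to recognize the recursion \eqref{e:algo1} as a concrete realization of the forward-backward-half-forward (FBHF) splitting algorithm of \cite{Siop18} applied to the inclusion $0\in\boldsymbol{M}\boldsymbol{x}+\boldsymbol{S}\boldsymbol{x}+\boldsymbol{G}\boldsymbol{x}$ in the real Hilbert space $\XXX=\XX\oplus\RR\oplus\YY^*$, and then to transfer the resulting convergence back to the primal and dual problems through Theorem~\ref{t:1}. All the operator-theoretic hypotheses needed have already been assembled in Proposition~\ref{p:6}: $\boldsymbol{M}$ is maximally monotone (Proposition~\ref{p:6}\ref{p:6ii}); $\boldsymbol{S}$ is bounded, linear, and skew, hence monotone and $\|L\|$-Lipschitzian (Proposition~\ref{p:6}\ref{p:6ii+}); and, under the standing differentiability assumptions on $h$ and $\ell^*$, $\boldsymbol{G}=\nabla\boldsymbol{h}$ is single-valued with full domain and $\min\{1/\delta,1/\vartheta\}$-cocoercive (Proposition~\ref{p:6}\ref{p:6ii+++}). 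These are exactly the three operator classes (maximally monotone, monotone-Lipschitzian, cocoercive) for which the FBHF scheme is designed.

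First I would record that the zero set is nonempty: since $\zer\mathscr{K}_F=\zer(\boldsymbol{M}+\boldsymbol{S}+\boldsymbol{G})$ by Proposition~\ref{p:6}\ref{p:6i-}, and since $\mathscr{P}\neq\emp$, Theorem~\ref{t:1}\ref{t:1vi} furnishes a triple $(\overline{x},\xi^*,y^*)\in\zer\mathscr{K}_F$ for any $\overline{x}\in\mathscr{P}$, so $\zer(\boldsymbol{M}+\boldsymbol{S}+\boldsymbol{G})\neq\emp$, as required by the convergence theorem.

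The crux is to verify line by line that \eqref{e:algo1}, with $\boldsymbol{x}_n=(x_n,\xi_n^*,y_n^*)$, implements one FBHF step $\boldsymbol{y}_n=\boldsymbol{x}_n-\gamma_n(\boldsymbol{S}\boldsymbol{x}_n+\boldsymbol{G}\boldsymbol{x}_n)$, $\boldsymbol{p}_n=J_{\gamma_n\boldsymbol{M}}\boldsymbol{y}_n$, $\boldsymbol{x}_{n+1}=\boldsymbol{p}_n-\gamma_n\boldsymbol{S}(\boldsymbol{p}_n-\boldsymbol{x}_n)$. Using the explicit forms of $\boldsymbol{S}$ and $\boldsymbol{G}$, the two forward evaluations give $\boldsymbol{y}_n=(z_n,\xi_n^*,z_n^*)$ with $z_n,z_n^*$ as in the first two lines. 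The backward step is where the nonlinear composition enters: by Proposition~\ref{p:7}\ref{p:7i}, $J_{\gamma_n\boldsymbol{M}}$ factors as $J_{\gamma_n\mathscr{K}_{\Psi}}\times\prox_{\gamma_n g^*}$, and by Proposition~\ref{p:71} together with Notation~\ref{n:K} (applicable since assumption~\ref{a:1i} of Problem~\ref{prob:1} supplies $(\inte\dom\phi)\cap f(\dom f)\neq\emp$) the first factor equals $(\prox_{\gamma_n(\omega\rocky f)}z_n,\omega)$ with $\omega=\mm(\gamma_n,z_n,\xi_n^*)=\xi_{n+1}^*$. This produces $\boldsymbol{p}_n=(p_n,\xi_{n+1}^*,p_n^*)$, matching lines three through five. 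Finally, since $\boldsymbol{S}$ is linear with vanishing $\RR$-component, the correction step reproduces the last two lines and leaves $\xi_{n+1}^*$ fixed. I would then check that the prescribed range $[\varepsilon,\chi-\varepsilon]$ for $(\gamma_n)_{n\in\NN}$ coincides with the admissible step-size interval of \cite{Siop18} for a $\|L\|$-Lipschitzian skew part and a $\min\{1/\delta,1/\vartheta\}$-cocoercive part.

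With the identification complete, the convergence theorem of \cite{Siop18} yields $\boldsymbol{x}_n\weakly\overline{\boldsymbol{x}}=(\overline{x},\overline{\xi^*},\overline{y^*})\in\zer\mathscr{K}_F$. Weak convergence in the Hilbert product space descends to each factor, giving $x_n\weakly\overline{x}$ and $y_n^*\weakly\overline{y^*}$; because the middle factor is $\RR$, weak convergence there is strong, which is precisely $\xi_n^*\to\overline{\xi^*}$. The memberships $\overline{x}\in\mathscr{P}$ and $(\overline{\xi^*},\overline{y^*})\in\mathscr{D}$ then follow from the inclusion $\zer\mathscr{K}_F\subset\mathscr{P}\times\mathscr{D}$ of Theorem~\ref{t:1}\ref{t:1v}. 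I expect the main obstacle to be the faithful transcription of the implicitly defined backward step, namely confirming that the scalar root $\omega=\xi_{n+1}^*$ of Notation~\ref{n:K} genuinely realizes $J_{\gamma_n\boldsymbol{M}}\boldsymbol{y}_n$; once that bookkeeping is settled, monotonicity, cocoercivity, nonemptiness of the zero set, and the step-size bound are all already packaged in the preceding results, and the conclusion is immediate.
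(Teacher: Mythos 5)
Your proposal is correct and follows essentially the same route as the paper: rewrite \eqref{e:algo1} as the forward-backward-half-forward iteration for $\boldsymbol{M}+\boldsymbol{S}+\boldsymbol{G}$ on $\XX\oplus\RR\oplus\YY$ using Propositions~\ref{p:6}, \ref{p:71}, and~\ref{p:7}\ref{p:7i}, invoke nonemptiness of $\zer\mathscr{K}_F$ via Theorem~\ref{t:1}\ref{t:1vi}, apply \cite[Theorem~2.3.1]{Siop18}, and conclude with Theorem~\ref{t:1}\ref{t:1v}. The only cosmetic difference is that you write the correction step as $\boldsymbol{p}_n-\gamma_n\boldsymbol{S}(\boldsymbol{p}_n-\boldsymbol{x}_n)$, which coincides with the paper's $\boldsymbol{p}_n+\gamma_n\boldsymbol{S}(\boldsymbol{x}_n-\boldsymbol{p}_n)$ by linearity of $\boldsymbol{S}$.
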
 
\begin{proof}
Set $\XXX=\XX\oplus\RR\oplus\YY$, let
$(\boldsymbol{M},\boldsymbol{S},\boldsymbol{G})$ be as in
\eqref{e:xena}, and let $\mathscr{K}_F$ be as in \eqref{e:kt}. We
first note that Theorem~\ref{t:1}\ref{t:1vi} asserts that 
$\zer\mathscr{K}_F\neq\emp$. Further, it follows from 
Proposition~\ref{p:6}\ref{p:6ii}--\ref{p:6ii+++} that 
$\boldsymbol{M}$ is maximally monotone,
that $\boldsymbol{S}$ is monotone and
$\|L\|$-Lipschitzian, and that $\boldsymbol{G}$ is 
$(\max\{\delta,\vartheta\})^{-1}$-cocoercive. Now set, for every
$n\in\NN$,
$\boldsymbol{x}_n=(x_n,\xi_n^*,y_n^*)$,
$\boldsymbol{z}_n=(z_n,\xi_n^*,z_n^*)$,
$\boldsymbol{p}_n=(p_n,\xi_{n+1}^*,p_n^*)$,
and $\boldsymbol{q}_n=(q_n,\xi_{n+1}^*,q_n^*)$. Then, 
in view of \eqref{e:xena}, Proposition~\ref{p:71}, and  
Proposition~\ref{p:7}\ref{p:7i}, we can express \eqref{e:algo1} as
\begin{equation}
\label{e:tseng}
\begin{array}{l}
\text{for}\;n=0,1,\ldots\\
\left\lfloor
\begin{array}{l}
\boldsymbol{z}_n=\boldsymbol{x}_n-
\gamma_n(\boldsymbol{S}\boldsymbol{x}_n+
\boldsymbol{G}\boldsymbol{x}_n)\\
\boldsymbol{p}_n=J_{\gamma_n\boldsymbol{M}}\,\boldsymbol{z}_n\\
\boldsymbol{x}_{n+1}=\boldsymbol{p}_n+\gamma_n
\boldsymbol{S}(\boldsymbol{x}_n-
\boldsymbol{p}_n).
\end{array}
\right.\\[2mm]
\end{array}
\end{equation}
In turn, we derive from \cite[Theorem~2.3.1]{Siop18} that 
$(\boldsymbol{x}_n)_{n\in\NN}$ converges weakly to a point 
$\overline{\boldsymbol{x}}=
(\overline{x},\overline{\xi^*},\overline{y^*})
\in\zer\mathscr{K}_F$.
In view of Theorem~\ref{t:1}\ref{t:1v}, the proof is complete.
\end{proof}

\begin{remark}
\label{r:9}
In Proposition~\ref{p:2}, we have reduced solving the primal
problem \eqref{e:p1} and its dual \eqref{e:p2} to finding a
zero of the sum of a maximally monotone operator, a monotone
Lipschitzian operator, and a cocoercive operator via the
forward-backward-half forward splitting
method \cite{Siop18}. Let us add a few comments. 
\begin{enumerate}
\item
\label{r:9i-}
Suppose that $\ell^*=0$ and $h=0$. Then
\eqref{e:tseng} is a nonlinear 
composite extension of the monotone+skew algorithm 
\cite[Theorem~3.1]{Siop11}; the latter is obtained with 
$\phi\colon\xi\to\xi$. Another method tailored to 
inclusions involving the sum of a maximally monotone operator 
and a monotone Lipschitzian operator 
is that of \cite[Corollary~5.2]{Jmaa20}, which can also
incorporate inertial effects. Another advantage of this 
framework is that it features, through \cite[Theorem~4.8]{Jmaa20},
a strongly convergent variant which does not require any additional
assumptions on the operators. Other pertinent algorithms for this
problem are found in \cite{Cevh21,Mali20,Ryue20}.
\item
\label{r:9i}
Suppose that $g=0$, $L=0$, and $\ell^*=0$. 
Then \eqref{e:tseng} is a nonlinear composite extension of 
of the forward-backward algorithm (see, e.g., 
\cite[Corollary~28.9]{Livre1}); the latter is recovered with
$\phi\colon\xi\to\xi$.
\item
\label{r:9iii}
In the case when $h=0$ and $\ell=\iota_{\{0\}}$, a zero of 
$\mathscr{K}_F=\boldsymbol{M}+\boldsymbol{S}$ can also
be found by splitting methods which employ the resolvent of 
$\boldsymbol{S}$ given in Proposition~\ref{p:7}\ref{p:7iv} 
and do not specifically exploit its Lipschitz continuity. See for
instance \cite[Remark~2.9]{Siop11}.
\end{enumerate}
\end{remark}

A noteworthy special case of Problem~\ref{prob:1} is when
$\YY=\XX$, $L=\Id$, $h=0$, and $\ell=\iota_{\{0\}}$. In this
setting, Proposition~\ref{p:14} asserts that Problem~\ref{prob:1}
can be solved by finding a zero of the sum of the monotone
operators $\mathscr{K}_{\Psi}$ and $\boldsymbol{B}$ defined in
\eqref{e:Akt} and \eqref{e:B}, respectively. We can for instance
use the Douglas--Rachford algorithm \cite[Theorem~26.11]{Livre1}
for this task, which leads to the following implementation.

\begin{proposition}
\label{p:3}
Let $\XX$ be a real Hilbert space, let $f\in\Gamma_0(\XX)$, let
$g\in\Gamma_0(\XX)$, let $\phi\in\Gamma_0(\RR)$ be increasing and
not constant. Consider the minimization problem
\begin{equation}
\label{e:p12}
\minimize{x\in\XX}{(\phi\circ f)(x)+g(x)},
\end{equation}
under the assumption that solutions exist.
Let $z_0\in\XX$, let $\zeta_0^*\in\RR$, let $\gamma\in\RPP$, let 
$(\lambda_n)_{n\in\NN}$ be a sequence in $\left]0,2\right[$ 
such that $\sum_{n\in\NN}\lambda_n(2-\lambda_n)=\pinf$, 
define $\mm$ as in Notation~\ref{n:K}, and iterate
\begin{equation}
\label{e:DR}
\begin{array}{l}
\text{for}\;n=0,1,\ldots\\
\left\lfloor
\begin{array}{l}
\xi^*_{n}=\mm(\gamma,z_{n},\zeta_n^*)\\
x_{n}=\prox_{\gamma(\xi^*_{n}\rocky f)}{z}_{n}\\
z_{n+1}=z_n+\lambda_n\big(\prox_{\gamma g}(2x_n-z_n)-x_n\big)\\
\zeta^*_{n+1}=\zeta_n^*+\lambda_n(\xi^*_n-\zeta^*_n).
\end{array}
\right.
\end{array}
\end{equation}
Then $x_n\weakly\overline{x}$, where $\overline{x}$ is a solution
to \eqref{e:p12}.
\end{proposition}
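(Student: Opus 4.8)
The plan is to recognize \eqref{e:p12} as the instance of Problem~\ref{prob:1} obtained by taking $\YY=\XX$, $L=\Id$, $h=0$, and $\ell=\iota_{\{0\}}$. For this data one has $g\infconv\ell=g$ and, since $\partial\ell=\partial\iota_{\{0\}}$ has the zero operator as its inverse, $(\partial g)\infconv(\partial\ell)=\partial g$. Moreover the qualification conditions of Problem~\ref{prob:1} specialize conveniently: \ref{a:1iii} and \ref{a:1ii} hold automatically (their left-hand differences fill the whole space), \ref{a:1iv} reduces to $0\in\sri(f^{-1}(\dom\phi)-\dom g)$, and \ref{a:1i} becomes $(\inte\dom\phi)\cap f(\dom f)\neq\emp$, which is also what makes $\mm$ well defined in Notation~\ref{n:K}. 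Under these conditions, Proposition~\ref{p:14} reduces solving \eqref{e:p12} to finding a zero of $\mathscr{K}_{\Psi}+\boldsymbol{B}$ on $\XX\oplus\RR$, where here $\boldsymbol{B}\colon(x,\xi^*)\mapsto\partial g(x)\times\{0\}$. Both summands are maximally monotone by Proposition~\ref{p:A}\ref{p:Av} and Proposition~\ref{p:14}\ref{p:14ii}, and $\zer(\mathscr{K}_{\Psi}+\boldsymbol{B})\neq\emp$ because $\mathscr{P}\neq\emp$ by hypothesis together with Proposition~\ref{p:14}\ref{p:14iii}.

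The core of the argument is then to check that \eqref{e:DR} is \emph{exactly} the relaxed Douglas--Rachford iteration of \cite[Theorem~26.11]{Livre1} applied to $\mathscr{A}=\mathscr{K}_{\Psi}$ and $\mathscr{B}=\boldsymbol{B}$, governed by $\boldsymbol{z}_n=(z_n,\zeta_n^*)$. First I would record the two resolvents. By Proposition~\ref{p:71} and Notation~\ref{n:K},
\begin{equation}
J_{\gamma\mathscr{K}_{\Psi}}(z,\zeta^*)=\big(\prox_{\gamma(\mm(\gamma,z,\zeta^*)\rocky f)}z,\;\mm(\gamma,z,\zeta^*)\big),
\end{equation}
which reproduces the first two lines of \eqref{e:DR} upon setting $\boldsymbol{x}_n=(x_n,\xi_n^*)=J_{\gamma\mathscr{K}_{\Psi}}\boldsymbol{z}_n$; and since the $\RR$-component of $\boldsymbol{B}$ is the zero operator, $J_{\gamma\boldsymbol{B}}(x,\xi^*)=(\prox_{\gamma g}x,\xi^*)$. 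Forming $2\boldsymbol{x}_n-\boldsymbol{z}_n=(2x_n-z_n,2\xi_n^*-\zeta_n^*)$, applying $J_{\gamma\boldsymbol{B}}$, and carrying out the relaxed update $\boldsymbol{z}_{n+1}=\boldsymbol{z}_n+\lambda_n\big(J_{\gamma\boldsymbol{B}}(2\boldsymbol{x}_n-\boldsymbol{z}_n)-\boldsymbol{x}_n\big)$ produces precisely the last two lines of \eqref{e:DR}, the $\RR$-coordinate collapsing to $\zeta^*_{n+1}=\zeta_n^*+\lambda_n(\xi_n^*-\zeta_n^*)$.

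With this identification in hand, \cite[Theorem~26.11]{Livre1} applies: since $\gamma\in\RPP$ and $(\lambda_n)_{n\in\NN}$ lies in $\left]0,2\right[$ with $\sum_{n\in\NN}\lambda_n(2-\lambda_n)=\pinf$, the shadow sequence $\boldsymbol{x}_n=J_{\gamma\mathscr{K}_{\Psi}}\boldsymbol{z}_n$ converges weakly to some $\overline{\boldsymbol{x}}=(\overline{x},\overline{\xi^*})\in\zer(\mathscr{K}_{\Psi}+\boldsymbol{B})$. Reading off the first coordinate gives $x_n\weakly\overline{x}$, and Proposition~\ref{p:14}\ref{p:14iii} ensures $\overline{x}\in\mathscr{P}$, i.e.\ $\overline{x}$ solves \eqref{e:p12}.

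I expect the main obstacle to be organizational rather than analytical: correctly matching the two operators to the roles in \cite[Theorem~26.11]{Livre1}, so that $\mathscr{K}_{\Psi}$ (whose resolvent is applied \emph{first} to $\boldsymbol{z}_n$) is the operator producing the governed \emph{shadow} sequence $\boldsymbol{x}_n$. It is this shadow, not the primary iterate $\boldsymbol{z}_n$, whose weak convergence to a zero of the sum is asserted, and hence it is $x_n$ rather than $z_n$ whose convergence is claimed. A secondary point to treat with care is the consistent handling of the $\RR$-coordinate: both resolvents act on it in a decoupled way (the identity for $\boldsymbol{B}$, the scalar fixed-point map $\mm$ for $\mathscr{K}_{\Psi}$), so one must verify that the updates of $\xi_n^*$ and $\zeta_n^*$ in \eqref{e:DR} are exactly the second components of $\boldsymbol{x}_n$ and $\boldsymbol{z}_n$.
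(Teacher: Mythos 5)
Your proposal is correct and follows essentially the same route as the paper: specialize Problem~\ref{prob:1} to $\YY=\XX$, $L=\Id$, $h=0$, $\ell=\iota_{\{0\}}$, invoke Proposition~\ref{p:14}\ref{p:14iii} for nonemptiness of $\zer(\mathscr{K}_{\Psi}+\boldsymbol{B})$, identify \eqref{e:DR} with the relaxed Douglas--Rachford iteration via the resolvent formulas of Proposition~\ref{p:71} and \cite[Proposition~23.18]{Livre1}, and conclude from \cite[Theorem~26.11]{Livre1}. Your added remarks on how the qualification conditions specialize and on tracking the shadow sequence are consistent with (and slightly more explicit than) the paper's argument.
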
 
\begin{proof}
We first observe that \eqref{e:p12} is the 
special case of Problem~\ref{prob:1} corresponding to
$\YY=\XX$, $L=\Id$, $h=0$, and $\ell=\iota_{\{0\}}$.
It follows from Proposition~\ref{p:14}\ref{p:14iii} that 
$\zer(\mathscr{K}_{\Psi}+\boldsymbol{B})\neq\emp$, where 
$\mathscr{K}_{\Psi}$ is defined in \eqref{e:Akt} and 
$\boldsymbol{B}\colon(x,\xi^*)\mapsto\partial g(x)\times\{0\}$ 
follows from \eqref{e:B}.
Therefore, by \cite[Proposition~23.18]{Livre1}, 
$J_{\gamma\boldsymbol{B}}\colon 
(x,\xi^*)\mapsto(\prox_{\gamma g}x,\xi^*)$.
Now set, for every $n\in\NN$,
$\boldsymbol{x}_n=(x_n,\xi^*_n)\in\XX\oplus\RR$ and
$\boldsymbol{z}_n=(z_n,\zeta^*_n)\in\XX\oplus\RR$.
Then it follows from Proposition~\ref{p:71}
that \eqref{e:DR} can be written as
\begin{equation}
\label{e:DR2}
(\forall n\in\NN)\quad
\begin{array}{l}
\left\lfloor
\begin{array}{l}
\boldsymbol{x}_n=J_{\gamma 
\mathscr{K}_{\Psi}}\boldsymbol{z}_n\\
\boldsymbol{z}_{n+1}=\boldsymbol{z}_n+\lambda_n
\big(J_{\gamma 
\boldsymbol{B}}(2\boldsymbol{x}_n-\boldsymbol{z}_n)
-\boldsymbol{x}_n\big).
\end{array}
\right.
\end{array}
\end{equation}
In turn, \cite[Theorem~26.11]{Livre1} yields
$\boldsymbol{x}_n\weakly\boldsymbol{\overline{x}}$
for some $\boldsymbol{\overline{x}}\in 
\zer(\mathscr{K}_{\Psi}+\boldsymbol{B})$ and the result follows
from Proposition~\ref{p:14}\ref{p:14iii}.
\end{proof}

\begin{remark}
In the case when $\phi\colon\xi\mapsto\xi$, $\phi^*=\iota_{\{1\}}$
and therefore $\mm\equiv1$. In this context, \eqref{e:algo1} 
reduces to the algorithm of \cite[Theorem~3.1]{Svva12} and 
\eqref{e:DR} reduces to the Douglas--Rachford algorithm 
\cite[Theorem~26.11]{Livre1}.
\end{remark}

\begin{remark}\
\label{r:tseng}
\begin{enumerate}
\item 
As special cases, Examples~\ref{ex:1}--\ref{ex:2} can be solved via
Proposition~\ref{p:2} or Proposition~\ref{p:3}, depending on the 
assumptions on $L$, $\ell$, and $h$. In these settings, the
function $\mm$ in \eqref{e:algo1} and \eqref{e:DR} is computed in
Examples~\ref{ex:1b}--\ref{ex:2b}.
\item 
In the particular case of Example~\ref{ex:1}, algorithms
\eqref{e:algo1} and \eqref{e:DR} activate the inequality constraint
$f(x)\leq 0$ through the proximity operator of $f$, as described in
Example~\ref{ex:1b}. Note that general convex inequalities are hard
to treat directly in the context of standard proximal methods since
they involve the projection onto the $0$-sublevel set of $f$, which
is typically not explicit. For instance, in the case of
\eqref{e:pineq2}, the projection onto the $\ell^p$-ball is
expensive to compute \cite{Cwco21}. However, within our framework,
\eqref{e:pineq2} is solved by computing $\mm(\gamma,x,\xi^*)$ 
via Example~\ref{ex:1b} as the unique real number in $\omega\in\RP$
such that
\begin{equation}
\label{e:8y}
\omega=
\begin{cases}
0,&\text{if}\;\;\xi^*/\gamma+\|x\|_p^p\leq\eta^p;\\
\xi^*+\gamma\big(\big\|\prox_{\gamma\omega\|\cdot\|_p^p}x
\big\|_p^p-\eta^p\big),
&\text{if}\;\;\xi^*/\gamma+\|x\|_p^p>\eta^p,
\end{cases}
\end{equation}
while
\begin{equation}
\label{e:cu}
\prox_{\gamma(\omega\rocky\|\cdot\|_p^p)}x=
\begin{cases}
x,&\text{if}\;\;\xi^*/\gamma+\|x\|_p^p\leq\eta^p;\\
\big(\prox_{\gamma\omega|\cdot|^p}\xi_i\big)_{1\leq i\leq N},
&\text{if}\;\;\xi^*/\gamma+\|x\|_p^p>\eta^p,
\end{cases}
\end{equation}
where $x=(\xi_i)_{1\leq i\leq N}$ and
$\prox_{\gamma\omega|\cdot|^p}$ is made explicit in 
\cite[Example~24.38]{Livre1}.
\item
An alternative algorithmic setting that imposes no restriction 
on $L$, $\ell$, and $h$ in Problem~\ref{prob:1} will be discussed 
in Remark~\ref{r:13}. 
\end{enumerate}
\end{remark}

\section{Composite monotone inclusions}
\label{sec:5}

To study the general Problem~\ref{prob:11}, let us first 
go back to Problem~\ref{prob:1} to motivate our approach. 
Define $\mathscr{K}_{\Psi}$ as
in \eqref{e:Akt}, $B=\partial g$ and set $D=\partial\ell$,
$\boldsymbol{L}\colon\XX\oplus\RR\to\YY\colon 
(x,\xi^*)\mapsto Lx$, and 
$\boldsymbol{A}\colon\XX\oplus\RR\to 2^{\XX^*\oplus\RR}\colon 
(x,\xi^*)\mapsto(\partial h(x))\times\{0\}$. Then it follows from 
Proposition~\ref{p:14}\ref{p:14iii} that, if 
$\overline{\boldsymbol{z}}
=(\overline{x},\overline{\xi}^*)\in\XX\oplus\RR$ 
solves the monotone inclusion
\begin{equation}
\label{e:10}
\boldsymbol{0}\in
\mathscr{K}_{\Psi}\boldsymbol{\overline{z}}
+\big(\boldsymbol{L}^*\circ (B\infconv D)\circ 
\boldsymbol{L}\big)\boldsymbol{\overline{z}}+
\boldsymbol{A}\boldsymbol{\overline{z}},
\end{equation}
then $\overline{x}$ solves Problem~\ref{prob:1}. 
Here, the Kuhn--Tucker operator $\mathscr{K}_{\Psi}$ is provided by
the perturbation analysis of Proposition~\ref{p:A} and encapsulates
the nonlinear composition in Problem~\ref{prob:1}. In turn,
splitting algorithms are proposed in Section~\ref{sec:42}
to solve this inclusion.

We extend the above strategy by first defining in
Section~\ref{sec:51} adequate Kuhn--Tucker operators for
Problem~\ref{prob:11} through a perturbation analysis. In
Section~\ref{sec:52}, we exploit this analysis to connect
Problem~\ref{prob:11} to a monotone inclusion problem that will
allow us to solve it. In Section~\ref{sec:53}, we present an
algorithm to solve the parent monotone inclusion problem and,
thereby, Problem~\ref{prob:11}.

Our notational scheme is as follows.

\begin{notation}
\label{n:2}
In the setting of Problem~\ref{prob:11}, 
$\mathsf{x}=(\xi_i)_{i\in I}$ and $\mathsf{x}^*=(\xi^*_i)_{i\in I}$
are generic elements in $\RR^I$, 
$\boldsymbol{z}=(x,\mathsf{x})=(x,(\xi_i)_{i\in I})$ is a generic
element in $\ZZZ=\XX\oplus\RR^{I}$, and
$\boldsymbol{z}^*=(x^*,\mathsf{x}^*)=(x^*,(\xi^*_i)_{i\in I})$ is a
generic element in $\ZZZ^*=\XX^*\oplus\RR^{I}$.
\end{notation}

\subsection{Perturbation analysis and Kuhn--Tucker operators}
\label{sec:51}
We introduce a perturbation analysis for the nonlinear compositions
in Problem~\ref{prob:11}.

\begin{proposition}
\label{p:22}
Consider the setting of Problem~\ref{prob:11} and 
Notation~\ref{n:2}. Fix $i\in I$, let
\begin{align}
\label{e:k4}
\Psi_i\colon\XX\times\RR^I&\to\RX\nonumber\\
\big(x,(\xi_j)_{j\in I}\big)&\mapsto 
\begin{cases}
\phi_i\big(f_i(x)+\xi_i\big),&\text{if}\;\; 
f_i(x)+\xi_i\in\dom\phi_i\;\;\text{and}\;\;
(\forall j\in I\smallsetminus\{i\})\;\;\xi_j=0;\\
\pinf,&\text{if}\;\;
f_i(x)+\xi_i\notin\dom\phi_i\;\;\text{or}\;\;
(\exi j\in I\smallsetminus\{i\})\;\;\xi_j\neq 0
\end{cases}
\end{align}
be a perturbation of $\phi_i\circ f_i$, let 
$\mathscr{L}_{\Psi_i}$ be the associated Lagrangian, and let
$\mathscr{K}_{\Psi_i}$ be the associated Kuhn--Tucker operator.
Then the following hold:
\begin{enumerate}
\item 
\label{p:22i}
$\Psi_i\in\Gamma_0(\XX\oplus\RR^I)$.
\item 
\label{p:22ii}
We have 
\begin{align}
\label{e:96}
\hspace{-8mm}\mathscr{L}_{\Psi_i}\colon\ZZZ\to\RXX
\colon
\boldsymbol{z}\mapsto 
\begin{cases}
\pinf,&\text{if}\;\;x\notin \dom f_i;\\
(\xi^*_i\rocky f_i)(x)-\phi^*_i(\xi^*_i),&\text{if}\;\; x\in \dom 
f_i\,\;\text{and}\,\,\xi^*_i\in\dom\phi^*_i;\\
\minf, &\text{if}\;\;x\in \dom 
f_i\,\;\text{and}\,\,\xi^*_i\notin\dom\phi^*_i.
\end{cases}
\end{align}
\item 
\label{p:22iii}
Let $\boldsymbol{z}\in\ZZZ$ and $\boldsymbol{z}^*\in\ZZZ^*$. Then
\begin{align}
\label{e:kt5}
\boldsymbol{z}^*\in\mathscr{K}_{\Psi_i}\boldsymbol{z}
\quad\Leftrightarrow\quad 
\begin{cases}
x\in\dom f_i\;\;\text{and}\;\;\xi^*_i\in\dom\phi^*_i\\
x^*\in\partial(\xi_i^*\rocky f_i)(x)\\
\xi_i\in\partial\phi_i^*(\xi_i^*)-f_i(x)\\
(\forall j\in I\smallsetminus\{i\})\;\xi_j=0.
\end{cases}
\end{align}
\item 
\label{p:22iv}
$\mathscr{K}_{\Psi_i}$ is maximally monotone.
\end{enumerate}
\end{proposition}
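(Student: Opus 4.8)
The plan is to view $\Psi_i$ as the single-composition perturbation of Proposition~\ref{p:A}, built from $\phi_i$ and $f_i$ and acting on the pair consisting of $x$ and the $i$-th perturbation coordinate, to which the rigid constraints $\iota_{\{0\}}$ on the remaining coordinates $j\in I\smallsetminus\{i\}$ are appended. Every assertion then follows by transcribing the corresponding part of Proposition~\ref{p:A} through this product structure.

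For \ref{p:22i}, I would set $\boldsymbol{f}_i\colon\XX\oplus\RR^I\to\RX$ equal to $(x,(\xi_j)_{j\in I})\mapsto f_i(x)+\xi_i$ when $\xi_j=0$ for every $j\neq i$ and to $\pinf$ otherwise. This is the sum of $f_i$ precomposed with the projection onto $\XX$, a continuous linear form, and the singleton indicators $\sum_{j\neq i}\iota_{\{0\}}$, hence $\boldsymbol{f}_i\in\Gamma_0(\XX\oplus\RR^I)$; moreover \eqref{e:k4} exhibits $\Psi_i=\phi_i\circ\boldsymbol{f}_i$ in the sense of \eqref{e:c1}. The hypothesis $(\inte\dom\phi_i)\cap f_i(\dom f_i)\neq\emp$ furnishes $z\in\dom f_i$ with $f_i(z)\in\dom\phi_i$, so freezing the remaining coordinates at $0$ gives a point at which $\boldsymbol{f}_i$ is finite and lands in $\dom\phi_i$; thus $(\dom\phi_i)\cap\boldsymbol{f}_i(\dom\boldsymbol{f}_i)\neq\emp$ and Proposition~\ref{p:1}\ref{p:1ii} yields $\Psi_i\in\Gamma_0(\XX\oplus\RR^I)$, mirroring Proposition~\ref{p:A}\ref{p:Ai-}.

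For \ref{p:22ii}, I would evaluate the Lagrangian from \eqref{e:L}: taking the infimum over all perturbation coordinates, the singleton constraints force $\xi_j=0$ for $j\neq i$, so those coordinates contribute nothing to either the objective or the linear pairing, and the expression collapses to the one-dimensional infimum already computed in Proposition~\ref{p:A}\ref{p:Ai}. This produces \eqref{e:96} and, crucially, shows that $\mathscr{L}_{\Psi_i}$ is independent of the perturbation coordinates other than the $i$-th. For \ref{p:22iii}, I would then read off $\mathscr{K}_{\Psi_i}$ factorwise from \eqref{e:K}. The $x$-factor reproduces $\partial(\xi^*_i\rocky f_i)(x)$ exactly as in Proposition~\ref{p:A}\ref{p:Aii}. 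For the perturbation factor, since $-\mathscr{L}_{\Psi_i}(x,\cdot)$ depends only on the $i$-th coordinate, a direct inspection of the subgradient inequality forces each component indexed by $j\neq i$ to vanish, while the $i$-th component is governed by the equivalence chain in the proof of Proposition~\ref{p:A}\ref{p:Aii}, namely $\partial\phi^*_i(\xi^*_i)-f_i(x)$. Combined with the finiteness requirements $x\in\dom f_i$ and $\xi^*_i\in\dom\phi^*_i$, this is precisely \eqref{e:kt5}. Finally, \ref{p:22iv} is immediate from \ref{p:22i} and Lemma~\ref{l:9}\ref{l:9i}, exactly as Proposition~\ref{p:A}\ref{p:Av} followed from Proposition~\ref{p:A}\ref{p:Ai-}.

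The routine parts are the reduction in \ref{p:22i}--\ref{p:22ii}; the one step deserving genuine care is the perturbation factor in \ref{p:22iii}, where one must argue that the subgradient is rigidly zero along every coordinate $j\neq i$. This is exactly the place where the singleton indicators built into $\Psi_i$ (equivalently, the independence of $\mathscr{L}_{\Psi_i}$ of the coordinates $j\neq i$ established in \ref{p:22ii}) are used, and it is what distinguishes the product-space statement here from its one-dimensional prototype in Proposition~\ref{p:A}.
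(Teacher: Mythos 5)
Your argument is correct and follows essentially the same route as the paper: part (i) via Proposition~\ref{p:1}\ref{p:1ii} applied to a composition $\phi_i\circ\boldsymbol{f}_i$ (the paper keeps $\boldsymbol{f}_i$ on $\XX\oplus\RR$ and adds the indicators $\iota_{\{0\}}$ outside the composition, whereas you absorb them into $\boldsymbol{f}_i$ on the full product space --- an immaterial difference), parts (ii)--(iii) by collapsing the Lagrangian and Kuhn--Tucker computations to the one-dimensional ones of Proposition~\ref{p:A}, and part (iv) from Lemma~\ref{l:9}\ref{l:9i}. Your observation that the independence of $-\mathscr{L}_{\Psi_i}(x,\cdot)$ of the coordinates $j\neq i$ forces the corresponding subgradient components to vanish is precisely the step the paper dispatches by ``arguing along the same lines as'' Proposition~\ref{p:A}\ref{p:Aii}.
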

\begin{proof}
\ref{p:22i}: 
Set $\boldsymbol{f}_i\colon\XX\oplus\RR\to\RX\colon 
(x,\xi)\mapsto f_i(x)+\xi$. We derive from 
\eqref{e:k4} and \eqref{e:c1} that 
\begin{equation}
\label{e:k7}
\Psi_i\colon\big(x,(\xi_j)_{j\in I}\big)\mapsto
\big(\phi_i\circ\boldsymbol{f}_i\big)(x,\xi_i)
+\sum_{j\in I\smallsetminus\{i\}}\iota_{\{0\}}(\xi_j).
\end{equation}
Since $f_i\in\Gamma_0(\XX)$, we have
$\boldsymbol{f}_i\in\Gamma_0(\XX\oplus\RR)$. 
Moreover, since $\emp\neq (\dom\phi_i)\cap f_i(\dom f_i)\subset 
(\dom\phi_i)\cap\boldsymbol{f}_i(\dom\boldsymbol{f}_i)$, 
Proposition~\ref{p:1}\ref{p:1ii} asserts that 
$\phi_i\circ\boldsymbol{f}_i\in\Gamma_0(\XX\oplus\RR)$ and the 
claim follows.

\ref{p:22ii}: 
Let $\boldsymbol{z}\in\ZZZ$.
We deduce from \eqref{e:L} that
\begin{align} 
\mathscr{L}_{\Psi_i}(\boldsymbol{z})
&=\inf_{(\xi_j)_{j\in I}\in\RR^I}
\bigg(\Psi_i\big(x,(\xi_j)_{j\in I}\big)-\sum_{j\in I}
\xi_j\xi_j^*\bigg)\nonumber\\
&=
\begin{cases}
-\sup\limits_{\xi_i\in(\dom\phi_i)-f_i(x)}
\big(\xi_i\xi_i^*-\phi_i(f_i(x)+\xi_i)\big),
&\text{if}\;\;x\in\dom f_i;\\
\pinf,&\text{if}\;\;x\notin\dom f_i\\
\end{cases}\nonumber\\
&=
\begin{cases}
\xi_i^*f_i(x)-\sup\limits_{\xi_i+
f_i(x)\in\dom\phi_i}\big(\xi_i+f_i(x)\big)
\xi_i^*-\phi_i\big(f_i(x)+\xi_i\big),
&\text{if}\;\;x\in\dom f_i;\\
\pinf,&\text{if}\;\;x\notin\dom f_i\\
\end{cases}\nonumber\\
&=
\begin{cases}
\xi_i^*f_i(x)-\phi_i^*(\xi_i^*),&\text{if}\;\; x\in\dom f_i;\\
\pinf,&\text{if}\;\; x\notin \dom f_i.
\end{cases}
\end{align}
The claim therefore follows from 
Lemma~\ref{l:3}\ref{l:3ii} and \eqref{e:66}.

\ref{p:22iii}: 
Note that, if $x\notin\dom f_i$, \ref{p:22ii} yields
$\mathscr{L}_{\Psi_i}(\boldsymbol{z})=\pinf$ and, hence, 
$\mathscr{K}_{\Psi_i}(\boldsymbol{z})=\emp$ 
in view of \eqref{e:K}.
Similarly, if $\xi_i^*\notin\dom\phi_i^*$, then 
$-\mathscr{L}_{\Psi_i}(\boldsymbol{z})=\pinf$, which yields 
$\mathscr{K}_{\Psi_i}(\boldsymbol{z})=\emp$.
Now suppose that $x\in\dom f_i$ and $\xi_i^*\in\dom\phi_i^*$.
Arguing along the same lines as in the proof of 
Proposition~\ref{p:A}\ref{p:Aii}, we deduce that 
$\partial(\mathscr{L}_{\Psi_i}(\cdot,(\xi^*_j)_{j\in I}))
=\partial(\xi^*_i\rocky f_i)$ and that 
$(\xi_j)_{j\in I}\in\partial
\big(-\mathscr{L}_{\Psi_i}(x,\cdot)\big)(\xi^*_j)_{j\in I}$ if and
only if 
$\xi_i\in\partial\phi_i^*(\xi_i^*)-f_i(x)$ and, for every 
$j\in I\smallsetminus\{i\}$, $\xi_j=0$.
Altogether, \eqref{e:K} yields \eqref{e:kt5}. 

\ref{p:22iv}: This follows from \ref{p:22i} and
Lemma~\ref{l:9}\ref{l:9i}.
\end{proof}

\subsection{Monotone inclusion formulation}
\label{sec:52}
We associate Problem~\ref{prob:11} with a monotone inclusion
problem.

\begin{proposition} 
\label{p:51}
Consider the setting of Problem~\ref{prob:11} and
Notation~\ref{n:2}. Define the operators
$(\mathscr{K}_{\Psi_i})_{i\in I}$ as in \eqref{e:kt5}, and set
\begin{equation}
\label{e:Au}
\begin{cases}
\boldsymbol{A}\colon\ZZZ\to 2^{\ZZZ^*}\colon 
\boldsymbol{z}\mapsto Ax\times\{0\}\\
\boldsymbol{C}\colon\ZZZ\to\ZZZ^*\colon 
\boldsymbol{z}\mapsto(Cx,0)\\
\boldsymbol{Q}\colon\ZZZ\to \ZZZ^*\colon 
\boldsymbol{z}\mapsto(Qx,0)\\
(\forall k\in K)\;\;\boldsymbol{L}_k\colon\ZZZ\to\YY_k\colon
\boldsymbol{z}\mapsto L_kx.
\end{cases}
\end{equation}
Consider the inclusion problem 
\begin{equation}
\label{e:11}
\text{find}\;\;\boldsymbol{z}\in\ZZZ\;\;\text{such that}\;\;
\boldsymbol{0}\in\sum_{i\in I}
\mathscr{K}_{\Psi_i}\boldsymbol{z}
+\sum_{k\in K}\big(\boldsymbol{L}^*_k\circ (B_k\infconv D_k)\circ 
\boldsymbol{L}_k\big)\boldsymbol{{z}}+
\boldsymbol{A}\boldsymbol{{z}}+
\boldsymbol{C}\boldsymbol{{z}}+
\boldsymbol{Q}\boldsymbol{{z}}.
\end{equation}
Then the following hold:
\begin{enumerate}
\item
\label{p:51i}
Suppose that 
$\overline{\boldsymbol{z}}=
(\overline{x},({\overline{\xi^*_i}})_{i\in I})\in\ZZZ$ solves 
\eqref{e:11}. Then $\overline{x}$ solves Problem~\ref{prob:11}.
\item
\label{p:51ii}
Suppose that $\overline{x}\in\XX$ solves Problem~\ref{prob:11} and
that, for every $i\in I$, 
$(\inte\dom\phi_i)\cap f_i(\dom f_i)\neq\emp$. 
Then there exists $(\overline{\xi^*_i})_{i\in I}\in\RR^I$ such 
that $(\overline{x},(\overline{\xi^*_i})_{i\in I})$ solves
\eqref{e:11}.
\end{enumerate}
\end{proposition}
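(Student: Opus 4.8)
The plan is to establish both implications by unravelling the inclusion \eqref{e:11} one coordinate block at a time, exactly as in the proof of Proposition~\ref{p:14}\ref{p:14iii} for the single-composition case. Fix a candidate $\overline{\boldsymbol{z}}=(\overline{x},(\overline{\xi_i^*})_i)\in\ZZZ$. First I would record how each operator in \eqref{e:Au} acts: by construction $\boldsymbol{A}$, $\boldsymbol{C}$, $\boldsymbol{Q}$, and every $\boldsymbol{L}_k^*\circ(B_k\infconv D_k)\circ\boldsymbol{L}_k$ see only the $\XX$-coordinate $\overline{x}$ and return $0$ in all the $\RR^I$-coordinates, whereas $\mathscr{K}_{\Psi_i}$ contributes, through \eqref{e:kt5}, an element of $\partial(\overline{\xi_i^*}\rocky f_i)(\overline{x})$ in the $\XX^*$-block together with an element of $\partial\phi_i^*(\overline{\xi_i^*})-f_i(\overline{x})$ in the $i$-th real slot and $0$ in every other real slot. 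Consequently, summing the $m$-th real coordinate over $i\in I$ leaves only the diagonal term, so that \eqref{e:11} splits into the single $\XX^*$-equation
\[
0\in\sum_{i\in I}\partial(\overline{\xi_i^*}\rocky f_i)(\overline{x})
+\sum_{k\in K}\big(L_k^*\circ(B_k\infconv D_k)\circ L_k\big)\overline{x}
+A\overline{x}+C\overline{x}+Q\overline{x}
\]
together with the $\card I$ scalar conditions $0\in\partial\phi_i^*(\overline{\xi_i^*})-f_i(\overline{x})$, the latter being equivalent, by Lemma~\ref{l:2}\ref{l:2ii}, to $\overline{\xi_i^*}\in\partial\phi_i(f_i(\overline{x}))$.

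For \ref{p:51i}, I would start from the assumption that $\overline{\boldsymbol{z}}$ solves \eqref{e:11}. Nonemptiness of $\mathscr{K}_{\Psi_i}\overline{\boldsymbol{z}}$, required for the inclusion to hold, forces $\overline{x}\in\dom f_i$ and $\overline{\xi_i^*}\in\dom\phi_i^*$; the scalar conditions give $\overline{\xi_i^*}\in\partial\phi_i(f_i(\overline{x}))$; and Proposition~\ref{p:1}\ref{p:1iii-} then yields $\partial(\overline{\xi_i^*}\rocky f_i)(\overline{x})\subset\partial(\phi_i\circ f_i)(\overline{x})$ for every $i\in I$. Substituting these inclusions into the $\XX^*$-equation displayed above turns it into \eqref{e:prob11}, so $\overline{x}$ solves Problem~\ref{prob:11}. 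This direction uses only the elementary inclusion \ref{p:1iii-} and hence needs no interior hypothesis.

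For \ref{p:51ii}, I would run the argument backwards, and this is where the interior assumption enters. Given a solution $\overline{x}$ of Problem~\ref{prob:11}, there exist selections $m_i^*\in\partial(\phi_i\circ f_i)(\overline{x})$, $w_k^*\in(B_k\infconv D_k)(L_k\overline{x})$, and $a^*\in A\overline{x}$ whose combination with $C\overline{x}+Q\overline{x}$ vanishes. Since $(\inte\dom\phi_i)\cap f_i(\dom f_i)\neq\emp$, Proposition~\ref{p:1}\ref{p:1iii} supplies the \emph{reverse} inclusion, so each $m_i^*$ can be written as $m_i^*\in\partial(\overline{\xi_i^*}\rocky f_i)(\overline{x})$ for some multiplier $\overline{\xi_i^*}\in\partial\phi_i(f_i(\overline{x}))$. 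Setting $\overline{\boldsymbol{z}}=(\overline{x},(\overline{\xi_i^*})_i)$ and invoking Lemma~\ref{l:2}\ref{l:2ii} once more gives $f_i(\overline{x})\in\partial\phi_i^*(\overline{\xi_i^*})$, so that $0$ is an admissible value of the $i$-th real output of $\mathscr{K}_{\Psi_i}$, while the $m_i^*$ realize the $\XX^*$-equation; by the coordinate analysis of the first paragraph this is exactly the assertion that $\overline{\boldsymbol{z}}$ solves \eqref{e:11}.

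The main obstacle I anticipate is the coordinate bookkeeping in the $\RR^I$-block: one must verify carefully that the off-diagonal outputs of each $\mathscr{K}_{\Psi_i}$ genuinely vanish, so that the $m$-th scalar equation isolates the single multiplier attached to $\phi_m$, and that the domain constraints $\overline{x}\in\dom f_i$ and $\overline{\xi_i^*}\in\dom\phi_i^*$ produced by $\mathscr{K}_{\Psi_i}$ are compatible with—indeed implied by—membership of $\overline{x}$ in $\dom(\phi_i\circ f_i)=f_i^{-1}(\dom\phi_i)$ (Proposition~\ref{p:1}\ref{p:1i}). The only genuinely asymmetric point between the two parts is the direction of the subdifferential formula for $\phi_i\circ f_i$, which is why the interior condition is needed only in \ref{p:51ii}.
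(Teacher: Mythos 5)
Your argument is correct and coincides with the paper's own proof: both reduce \eqref{e:11} via \eqref{e:kt5} and \eqref{e:Au} to the coupled system consisting of the $\XX^*$-inclusion $0\in\sum_{i\in I}\partial(\overline{\xi_i^*}\rocky f_i)(\overline{x})+\sum_{k\in K}(L_k^*\circ(B_k\infconv D_k)\circ L_k)\overline{x}+A\overline{x}+C\overline{x}+Q\overline{x}$ together with the scalar conditions $\overline{\xi_i^*}\in\partial\phi_i(f_i(\overline{x}))$, and then invoke Proposition~\ref{p:1}\ref{p:1iii-} for \ref{p:51i} and Proposition~\ref{p:1}\ref{p:1iii} for \ref{p:51ii}. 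The coordinate bookkeeping you flag as the main concern is precisely what the paper's displayed equivalence carries out, so there is no gap.
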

\begin{proof}
In view of \eqref{e:kt5} and \eqref{e:Au}, 
for every $\overline{\boldsymbol{z}}=
(\overline{x},({\overline{\xi^*_i}})_{i\in I})\in\ZZZ$,
\begin{eqnarray}
\label{e:24}
\boldsymbol{0}&\in&\sum_{i\in I}
\mathscr{K}_{\Psi_i}\overline{\boldsymbol{z}}
+\sum_{k\in K}\big(\boldsymbol{L}^*_k\circ (B_k\infconv D_k)\circ 
\boldsymbol{L}_k\big)\overline{\boldsymbol{z}}+
\boldsymbol{A}\overline{\boldsymbol{z}}+
\boldsymbol{C}\overline{\boldsymbol{z}}+
\boldsymbol{Q}\overline{\boldsymbol{z}}\nonumber\\
&\Leftrightarrow&
\begin{cases}
0\in\Sum_{i\in I}\partial
(\overline{\xi_i^*}\rocky f_i)(\overline{x})
+\Sum_{k\in K}\big(L^*_k\circ (B_k\infconv D_k)\circ L_k\big)
\overline{x}+A\overline{x}+C\overline{x}+Q\overline{x}\\
(\forall i\in I)\quad 0\in\partial\phi_i^*(\overline{\xi_i^*})
-f_i(\overline{x}),
\end{cases}\nonumber\\
&\Leftrightarrow&
\begin{cases}
0\in\Sum_{i\in I}\partial
(\overline{\xi_i^*}\rocky f_i)(\overline{x})
+\Sum_{k\in K}\big(L^*_k\circ (B_k\infconv D_k)\circ L_k\big)
\overline{x}+A\overline{x}+C\overline{x}+Q\overline{x}\\
(\forall i\in I)\quad \overline{\xi_i^*}\in\partial\phi_i\big(
f_i(\overline{x})\big).
\end{cases}
\end{eqnarray}

\ref{p:51i}:
This follows from \eqref{e:24} and 
Proposition~\ref{p:1}\ref{p:1iii-}.

\ref{p:51ii}: 
This follows from Proposition~\ref{p:1}\ref{p:1iii} and 
\eqref{e:24}. 
\end{proof}

\subsection{Block-iterative algorithm}
\label{sec:53}
We provide a flexible splitting algorithm for solving
Problem~\ref{prob:11}, in which all the operators and functions
present in \eqref{e:prob11} are activated separately. The proposed
algorithm allows for block-iterative implementations in the sense
that, at a given iteration, only a subgroup of functions and
operators needs to be employed, as opposed to all of them as in
classical methods. This feature is particularly
important in large-scale applications, where it is either
impossible or inefficient to evaluate each the operators at each
iteration. This algorithm is an offspring of that of \cite{Moor22},
where one will find more details on its construction. In a
nutshell, its principle is to recast the parent inclusion problem
\eqref{e:11} as that
of finding a zero of a saddle operator that acts on a bigger space.
This task is achieved by successive projections onto half-spaces
containing the set of zeros of the saddle operator. At a given
iteration, the construction of such a half-space involves the
resolvents of the selected operators.

The following vector version of Notation~\ref{n:K} will be needed.
\begin{notation}
\label{n:K2} 
Let $\gamma\in\RPP$, let $x\in\XX$, and let $\mathsf{x}\in\RR^{I}$.
For every $i\in I$, let $\mm_i$ be the function defined as in
Notation~\ref{n:K} with respect to $(\phi_i,f_i)$, let
$\mathsf{e}_i\in\RR^{I}$ be the $i$th canonical vector, let
$[\mathsf{x}]_i$ be the $i$th coordinate of $\mathsf{x}$, and set
$\mathsf{w}_i(\gamma,x,\mathsf{x})=\mathsf{x}+
(\mm_i(\gamma,x,[\mathsf{x}]_i)-[\mathsf{x}]_i)\mathsf{e}_i$.
\end{notation}

\begin{theorem}
\label{t:3} 
Consider the setting of Problem~\ref{prob:11} and of
Notations~\ref{n:1}, \ref{n:2}, and~\ref{n:K2}. Suppose that 
$\XX$ and $(\YY_k)_{k\in K}$ are real Hilbert spaces with scalar
products denoted by $\scal{\cdot}{\cdot}$ and that
Problem~\ref{prob:11} admits at least one solution. In
addition, let $\sigma\in\left]1/(4\beta),\pinf\right[$, let
$\varepsilon\in\left]0,\min\{1,1/(\chi+\sigma)\}\right[$,
let $(\gamma_n)_{n\in\NN}$ and $(\lambda_n)_{n\in\NN}$ be 
sequences in $\left[\varepsilon,1/(\chi+\sigma)\right]$ and 
$\left[\varepsilon,2-\varepsilon\right]$, respectively, and,
for every $j\in I\cup K$, let $(\mu_{j,n})_{n\in\NN}$, 
$(\nu_{j,n})_{n\in\NN}$, and $(\sigma_{j,n})_{n\in\NN}$ 
be sequences in $\left[\varepsilon,1/\sigma\right]$,
$\left[\varepsilon,1/\sigma\right]$, and 
$\left[\varepsilon,1/\varepsilon\right]$,
respectively. Moreover, let $x_0\in\XX$, let 
$\mathsf{x}_0\in\RR^{I}$, for every $i\in I$, let
$\{{y}_{i,0},{u}_{i,0},{v}_{i,0}^*\}\subset\XX$ and
$\{\mathsf{y}_{i,0},\mathsf{u}_{i,0},\mathsf{v}_{i,0}^*\}
\subset\RR^{I}$, and, for every $k\in K$, let
$\{y_{k,0},u_{k,0},v_{k,0}^*\}\subset\YY_k$. Further, let 
$(P_n)_{n\in\NN}$ be nonempty subsets of $I\cup K$ such that
\begin{equation}
\label{e:23931}
P_0=I\cup K\quad\text{and}\quad(\exi M\in\NN)(\forall n\in\NN)\;\;
\bigcup_{k=n}^{n+M}P_k=I\cup K.
\end{equation}
Iterate
\begin{align}
\label{e:long17}
\hspace{-9mm}
&
\begin{array}{l}
\text{for}\;n=0,1,\ldots\\
\left\lfloor        
\begin{array}{l}
l_n^*=Qx_n+\sum_{i\in I}v_{i,n}^*
+\sum_{k\in K}L_k^*v_{k,n}^*;\;
\mathsf{\ell}_n^*=\sum_{i\in I}\mathsf{v}_{i,n}^*;\\
a_n=J_{\gamma_n{A}}\big(x_n-\gamma_n(l_n^*+Cx_n)\big);\;
\mathsf{a}_n=\mathsf{x}_n-\gamma_n\ell_n^*;\\
a_n^*=\gamma_n^{-1}(x_n-a_n)-l_n^*+Qa_n;\;
\xi_n=\|a_n-x_n\|^2+\|\mathsf{a}_n-\mathsf{x}_n\|^2;\\[1mm]
\text{for every}\;i\in I\cap P_n\\
\left\lfloor
\begin{array}{l}
\mathsf{b}_{i,n}=\mathsf{w}_i(\mu_{i,n},y_{i,n}
\!+\!\mu_{i,n}v_{i,n}^*,\mathsf{y}_{i,n}
\!+\!\mu_{i,n}\mathsf{v}_{i,n}^*);\;
b_{i,n}=\prox_{\mu_{i,n}([\mathsf{b}_{i,n}]_i\rocky f_i)}
\big(y_{i,n}\!+\!\mu_{i,n}v_{i,n}^*\big);\\
e_{i,n}=b_{i,n}-a_n;\;
\mathsf{e}_{i,n}=\mathsf{b}_{i,n}-\mathsf{a}_n;\\
e_{i,n}^*=\sigma_{i,n}(x_n -y_{i,n}-u_{i,n})+v_{i,n}^*;\;
\mathsf{e}_{i,n}^*=\sigma_{i,n}(\mathsf{x}_n
-\mathsf{y}_{i,n}-\mathsf{u}_{i,n})+\mathsf{v}_{i,n}^*;\\
q_{i,n}^*=\mu_{i,n}^{-1}(y_{i,n}-b_{i,n})+v_{i,n}^*-e_{i,n}^*;\;
\mathsf{q}_{i,n}^*=
\mu_{i,n}^{-1}(\mathsf{y}_{i,n}-\mathsf{b}_{i,n})
+\mathsf{v}_{i,n}^*-\mathsf{e}_{i,n}^*;\\
t_{i,n}^*=\nu_{i,n}^{-1}u_{i,n}+v_{i,n}^*-e_{i,n}^*;\;
\mathsf{t}_{i,n}^*=\nu_{i,n}^{-1}\mathsf{u}_{i,n}
+\mathsf{v}_{i,n}^*-\mathsf{e}_{i,n}^*;\\
\eta_{i,n}=\|b_{i,n}-y_{i,n}\|^2+
\|\mathsf{b}_{i,n}-\mathsf{y}_{i,n}\|^2
+\|u_{i,n}\|^2+\|\mathsf{u}_{i,n}\|^2;
\end{array}
\right.\\[14mm]
\text{for every}\;k\in K\cap P_n\\
\left\lfloor
\begin{array}{l}
b_{k,n}=J_{\mu_{k,n}B_k}\big(y_{k,n}+\mu_{k,n}v_{k,n}^*\big);\;
d_{k,n}=J_{\nu_{k,n}D_k}\big(u_{k,n}+\nu_{k,n}v_{k,n}^*\big);\\
e_{k,n}=b_{k,n}+d_{k,n}-L_ka_n;\;
e_{k,n}^*=\sigma_{k,n}\big(L_kx_n-y_{k,n}-u_{k,n}\big)+v_{k,n}^*;\\
q_{k,n}^*=\mu_{k,n}^{-1}(y_{k,n}-b_{k,n})+v_{k,n}^*-e_{k,n}^*;\;
t_{k,n}^*=\nu_{k,n}^{-1}(u_{k,n}-d_{k,n})+v_{k,n}^*-e_{k,n}^*;\\
\eta_{k,n}=\|b_{k,n}-y_{k,n}\|^2+\|d_{k,n}-u_{k,n}\|^2;
\end{array}
\right.\\[4mm]
\text{for every}\;i\in I\smallsetminus P_n\\
\left\lfloor
\begin{array}{l}
b_{i,n}=b_{i,n-1};\:
\mathsf{b}_{i,n}=\mathsf{b}_{i,n-1};\:
e_{i,n}=b_{i,n}-a_n;\:
\mathsf{e}_{i,n}=\mathsf{b}_{i,n}-\mathsf{a}_n;
e_{i,n}^*=e_{i,n-1}^*;\\
\mathsf{e}_{i,n}^*=\mathsf{e}_{i,n-1}^*;\:
q_{i,n}^*=q_{i,n-1}^*;\:
\mathsf{q}_{i,n}^*=\mathsf{q}_{i,n-1}^*;\:
t_{i,n}^*=t_{i,n-1}^*;\:
\mathsf{t}_{i,n}^*=\mathsf{t}_{i,n-1}^*;\:
\eta_{i,n}=\eta_{i,n-1};
\end{array}
\right.\\[4mm]
\text{for every}\;k\in K\smallsetminus P_n\\
\left\lfloor
\begin{array}{l}
b_{k,n}=b_{k,n-1};\;
d_{k,n}=d_{k,n-1};\;
e_{k,n}=b_{k,n}+d_{k,n}-L_ka_n;\;
e_{k,n}^*=e_{k,n-1}^*;\\
q_{k,n}^*=q_{k,n-1}^*;\;
t_{k,n}^*=t_{k,n-1}^*;\;
\eta_{k,n}=\eta_{k,n-1};
\end{array}
\right.\\[4mm]
\begin{aligned}
p_n^*&=a_n^*+\sum_{i\in I}e_{i,n}^*+\sum_{k\in K}L_k^*e_{k,n}^*;\;
\mathsf{p}_n^*=\sum_{i\in I}\mathsf{e}_{i,n}^*;\\
\Delta_n
&=-(4\beta)^{-1}\big(\xi_n
+\textstyle\sum_{j\in I\cup K}\eta_{j,n}\big)+
\scal{x_n-a_n}{p_n^*}+
\scal{\mathsf{x}_n-\mathsf{a}_n}{\mathsf{p}_n^*}\\
&\textstyle
\quad\;+\textstyle\sum_{i\in I}
\big(\scal{y_{i,n}-b_{i,n}}{q_{i,n}^*}
+\scal{u_{i,n}}{t_{i,n}^*}
+\scal{e_{i,n}}{v_{i,n}^*-e_{i,n}^*}\big)\\
&\quad\;+\textstyle\sum_{i\in I}
\big(\scal{\mathsf{y}_{i,n}-\mathsf{b}_{i,n}}{\mathsf{q}_{i,n}^*}
+\scal{\mathsf{u}_{i,n}}{\mathsf{t}_{i,n}^*}
+\scal{\mathsf{e}_{i,n}}{\mathsf{v}_{i,n}^*-\mathsf{e}_{i,n}^*}
\big)\\
&\textstyle
\quad\;+\textstyle\sum_{k\in K}
\big(\scal{y_{k,n}-b_{k,n}}{q_{k,n}^*}
+\scal{u_{k,n}-d_{k,n}}{t_{k,n}^*}
+\scal{e_{k,n}}{v_{k,n}^*-e_{k,n}^*}\big);
\end{aligned}\\
\text{if}\;\Delta_n>0\\
\left\lfloor
\begin{array}{l}
\alpha_n\!=\!\|p_n^*\|^2\!+\!\|\mathsf{p}_n^*\|^2\!+\!\!\!
\Sum_{j\in I\cup K}\!\!\!
\|q_{j,n}^*\|^2\!+\!\|t_{j,n}^*\|^2\!+\!\|e_{j,n}\|^2
\!+\!\Sum_{i\in I}\|\mathsf{q}_{i,n}^*\|^2\!+\!
\|\mathsf{t}_{i,n}^*\|^2
\!+\!\|\mathsf{e}_{i,n}\|^2;\\
\theta_n=\lambda_n\Delta_n/\alpha_n;\\
x_{n+1}=x_n-\theta_n p_n^*;\;
\mathsf{x}_{n+1}=\mathsf{x}_n-\theta_n\mathsf{p}_n^*;\\
\text{for every}\;j\in I\cup K\\
\left\lfloor
\begin{array}{l}
y_{j,n+1}=y_{j,n}-\theta_nq_{j,n}^*;\;
u_{j,n+1}=u_{j,n}-\theta_nt_{j,n}^*;\;
v_{j,n+1}^*=v_{j,n}^*-\theta_ne_{j,n};
\end{array}
\right.\\[2mm]
\text{for every}\;i\in I\\
\left\lfloor
\begin{array}{l}
\mathsf{y}_{i,n+1}=\mathsf{y}_{i,n}-\theta_n\mathsf{q}_{i,n}^*;\;
\mathsf{u}_{i,n+1}=\mathsf{u}_{i,n}-\theta_n\mathsf{t}_{i,n}^*;\;
\mathsf{v}_{i,n+1}^*=\mathsf{v}_{i,n}^*-\theta_n\mathsf{e}_{i,n};
\end{array}
\right.\\[2mm]
\end{array}
\right.\\
\text{else}\\
\left\lfloor
\begin{array}{l}
x_{n+1}=x_n;\;
\mathsf{x}_{n+1}=\mathsf{x}_n;\\
\text{for every}\;j\in I\cup K\\
\left\lfloor
\begin{array}{l}
y_{j,n+1}=y_{j,n};\;
u_{j,n+1}=u_{j,n};\;
v_{j,n+1}^*=v_{j,n}^*;\;
\end{array}
\right.\\[2mm]
\text{for every}\;i\in I\\
\left\lfloor
\begin{array}{l}
\mathsf{y}_{i,n+1}=\mathsf{y}_{i,n};\;
\mathsf{u}_{i,n+1}=\mathsf{u}_{i,n};\;
\mathsf{v}_{i,n+1}^*=\mathsf{v}_{i,n}^*.
\end{array}
\right.\\
\end{array}
\right.\\[9mm]
\end{array}
\right.
\end{array}
\end{align}
Then $x_n\weakly\overline{x}$, where $\overline{x}$ is a solution
to Problem~\ref{prob:11}. 
\end{theorem}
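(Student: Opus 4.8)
The plan is to recognize algorithm \eqref{e:long17} as a realization of the block-iterative projective splitting method of \cite{Moor22} applied to the parent inclusion \eqref{e:11}, and then to transfer the conclusion back to Problem~\ref{prob:11} via Proposition~\ref{p:51}. First I would record that \eqref{e:11} is solvable: since Problem~\ref{prob:11} is assumed to admit a solution $\overline{x}$ and, by hypothesis, $(\inte\dom\phi_i)\cap f_i(\dom f_i)\neq\emp$ for every $i\in I$, Proposition~\ref{p:51}\ref{p:51ii} furnishes $(\overline{\xi_i^*})_{i\in I}\in\RR^I$ such that $\overline{\boldsymbol{z}}=(\overline{x},(\overline{\xi_i^*})_{i\in I})$ solves \eqref{e:11}. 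Hence the zero set of the operator in \eqref{e:11} is nonempty, which is the standing requirement for the cited convergence result.

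Next I would cast \eqref{e:11} in the structured form handled by \cite{Moor22}: a finite sum of maximally monotone operators, some precomposed with bounded linear operators, plus a cocoercive term and a monotone Lipschitzian term. The identifications are the operators $(\mathscr{K}_{\Psi_i})_{i\in I}$ on $\ZZZ$, each maximally monotone by Proposition~\ref{p:22}\ref{p:22iv}; for every $k\in K$ the maximally monotone pair $B_k,D_k$, whose parallel sum $B_k\infconv D_k$ precomposed with $\boldsymbol{L}_k$ is processed by splitting $B_k$ and $D_k$ separately while sharing the dual variable $v_{k,n}^*$; the maximally monotone $\boldsymbol{A}$; the $\beta$-cocoercive $\boldsymbol{C}$; and the monotone $\chi$-Lipschitzian $\boldsymbol{Q}$ (cocoercivity and Lipschitz continuity being inherited from $C$ and $Q$ since the $\RR^I$ component is inert). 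In \eqref{e:long17} each is activated at most once per sweep: $\boldsymbol{A}$ through $J_{\gamma_n A}$, $\boldsymbol{C}$ and $\boldsymbol{Q}$ through the forward evaluations yielding $a_n$ and the corrected residual $a_n^*=\gamma_n^{-1}(x_n-a_n)-l_n^*+Qa_n$, each $\mathscr{K}_{\Psi_i}$ through its resolvent, and each $B_k,D_k$ through $J_{\mu_{k,n}B_k}$ and $J_{\nu_{k,n}D_k}$.

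The core verification is that the $i$-blocks compute $J_{\mu_{i,n}\mathscr{K}_{\Psi_i}}$. By Proposition~\ref{p:71} and Notation~\ref{n:K2}, the pair $(b_{i,n},\mathsf{b}_{i,n})$ generated from $(y_{i,n}+\mu_{i,n}v_{i,n}^*,\mathsf{y}_{i,n}+\mu_{i,n}\mathsf{v}_{i,n}^*)$ is exactly $J_{\mu_{i,n}\mathscr{K}_{\Psi_i}}$ of that point: the function $\mathsf{w}_i$ places the scalar $\mm_i$-value, namely the unique $\omega\in\RP$ solving \eqref{e:o}, in the $i$th coordinate, while $b_{i,n}=\prox_{\mu_{i,n}([\mathsf{b}_{i,n}]_i\rocky f_i)}(\,\cdot\,)$ matches \eqref{e:7ii}. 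I would then check that the residuals $q_{j,n}^*,t_{j,n}^*,e_{j,n}$ (and their sans-serif analogues), the quantities $\xi_n,\eta_{j,n}$, the separator $\Delta_n$, its normalization $\alpha_n$, and the relaxed update $\theta_n=\lambda_n\Delta_n/\alpha_n$ coincide with the half-space construction of \cite{Moor22}, the coefficient $(4\beta)^{-1}$ in $\Delta_n$ and $\alpha_n$ being the contribution of the cocoercive block.

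Finally I would confirm that the parameter conditions match the hypotheses of the convergence theorem of \cite{Moor22}: the bound $\sigma>1/(4\beta)$ together with $\gamma_n\le 1/(\chi+\sigma)$, $\mu_{j,n},\nu_{j,n}\le 1/\sigma$, $\sigma_{j,n}\in\left[\varepsilon,1/\varepsilon\right]$, and $\lambda_n\in\left[\varepsilon,2-\varepsilon\right]$ secure the strict positivity of the required step-size and coercivity quantities, while the sweeping rule \eqref{e:23931} is the admissible block-activation control. Invoking that theorem then yields weak convergence of $(x_n,\mathsf{x}_n)$ to a solution of \eqref{e:11}, so in particular $x_n\weakly\overline{x}$, and Proposition~\ref{p:51}\ref{p:51i} gives that $\overline{x}$ solves Problem~\ref{prob:11}. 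The hard part will be the bookkeeping in this core verification: matching line by line the dual-variable updates and the separating-hyperplane data of \eqref{e:long17} to the generic template of \cite{Moor22}, and confirming that treating $B_k\infconv D_k$ as two nodes $B_k,D_k$ sharing a single dual variable is admissible within that template. Once the correspondence is pinned down, convergence follows directly from the cited result.
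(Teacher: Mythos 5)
Your proposal follows the paper's proof essentially verbatim: establish solvability of the parent inclusion \eqref{e:11} via Proposition~\ref{p:51}\ref{p:51ii}, identify the $i$-blocks of \eqref{e:long17} as the resolvents $J_{\mu_{i,n}\mathscr{K}_{\Psi_i}}$ via Proposition~\ref{p:71} and Notation~\ref{n:K2}, recognize the whole scheme as \cite[Algorithm~1]{Moor22} applied to \eqref{e:11}, invoke \cite[Theorem~1(iv)]{Moor22}, and return to Problem~\ref{prob:11} through Proposition~\ref{p:51}. The only device you leave implicit in your ``bookkeeping'' step is that the paper casts each $i\in I$ node in the template form $\boldsymbol{M}_i^*\circ(\boldsymbol{B}_i\infconv\boldsymbol{D}_i)\circ\boldsymbol{M}_i$ by taking $\boldsymbol{B}_i=\mathscr{K}_{\Psi_i}$, $\boldsymbol{D}_i=N_{\{0\}}$, and $\boldsymbol{M}_i=\boldsymbol{\Id}$, which is precisely what accounts for the variables $u_{i,n}$, $t_{i,n}^*$ and the terms $\|u_{i,n}\|^2$ in $\eta_{i,n}$ for $i\in I$.
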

\begin{proof}
Set $P=I\cup K$, let $(\mathscr{K}_{\Psi_i})_{i\in I}$, 
$(\boldsymbol{L}_k)_{k\in K}$, $\boldsymbol{A}$, $\boldsymbol{C}$,
and $\boldsymbol{Q}$ be the operators defined in 
\eqref{e:kt5} and $\eqref{e:Au}$,
and, for every $j\in P$, define
\begin{equation}
\label{e:w1}
\boldsymbol{B}_j=
\begin{cases}
\mathscr{K}_{\Psi_j},&\text{if}\;j\in I;\\
B_j,&\text{if}\;j\in K,
\end{cases}\quad 
\boldsymbol{D}_j=
\begin{cases}
N_{\{0\}},&\text{if}\;j\in I;\\
D_j,&\text{if}\;j\in K,
\end{cases}\quad\text{and}\quad 
\boldsymbol{M}_j=
\begin{cases}
\boldsymbol{\Id},&\text{if}\;j\in I;\\
\boldsymbol{L}_j,&\text{if}\;j\in K.
\end{cases}
\end{equation}
Then $\boldsymbol{A}$ is maximally monotone, $\boldsymbol{C}$ is
$\beta$-cocoercive, $\boldsymbol{Q}$ is monotone and
$\chi$-Lipschitzian, and, for every $j\in P$, $\boldsymbol{B}_j$
and $\boldsymbol{D}_j$ are maximally monotone (see
Proposition~\ref{p:22}\ref{p:22iv}) and
$\boldsymbol{M}_j$ is linear and bounded. It follows from
Proposition~\ref{p:51}\ref{p:51ii} and \eqref{e:w1} that there 
exists $\overline{\boldsymbol{z}}=
(\overline{x},(\overline{\xi_i^*})_{i\in I})\in\XX\oplus\RR^{I}$
such that
\begin{equation}
\label{e:1p}
\boldsymbol{0}\in\Sum_{j\in P}
\big(\boldsymbol{M}_j^*\circ
(\boldsymbol{B}_j\infconv 
\boldsymbol{D}_j)\circ\boldsymbol{M}_j\big)
\overline{\boldsymbol{z}}+\boldsymbol{A}\overline{\boldsymbol{z}}
+\boldsymbol{C}\overline{\boldsymbol{z}}+
\boldsymbol{Q}\overline{\boldsymbol{z}}.
\end{equation}
Moreover, we derive from \eqref{e:kt5} and 
Proposition~\ref{p:71} that 
\begin{equation}
\label{e:w2}
(\forall i\in I)(\forall\gamma\in\RPP)\quad
J_{\gamma\mathscr{K}_{\Psi_i}}\colon (x,\mathsf{x})\mapsto 
\big(\prox_{\gamma(\mm_i(\gamma,x,[\mathsf{x}]_i)\rocky f_i)}x,
\mathsf{w}_i(\gamma,x,\mathsf{x})\big).
\end{equation}
Now, for every $n\in\NN$, set
\begin{equation}
\label{e:w3}
\boldsymbol{z}_n=(x_n,\mathsf{x}_n),\;
\boldsymbol{l}_n^*=(l_n^*,\ell_n^*),\;
\boldsymbol{a}_n=(a_n,\mathsf{a}_n),\;
\boldsymbol{a}_n^*=(a_n^*,\mathsf{0}),\;
\boldsymbol{p}_n^*=(p_n^*,\mathsf{p}_n^*),
\end{equation}
for every $j\in I$,  set
\begin{equation}
\label{e:w4}
\begin{cases}
\boldsymbol{b}_{j,n}=(b_{j,n},\mathsf{b}_{j,n}),\;
\boldsymbol{e}_{j,n}^*=(e_{j,n}^*,\mathsf{e}_{j,n}^*),\;
\boldsymbol{e}_{j,n}=(e_{j,n},\mathsf{e}_{j,n}),\;
\boldsymbol{q}_{j,n}^*=(q_{j,n}^*,\mathsf{q}_{j,n}^*),\;
\boldsymbol{t}_{j,n}^*=(t_{j,n}^*,\mathsf{t}_{j,n}^*),\\
\boldsymbol{y}_{j,n}=(y_{j,n},\mathsf{y}_{j,n}),\;
\boldsymbol{u}_{j,n}=(u_{j,n},\mathsf{u}_{j,n}),\;
\boldsymbol{v}_{j,n}^*=(v_{j,n}^*,\mathsf{v}_{j,n}^*),\;
\boldsymbol{d}_{j,n}=(0,0),
\end{cases}
\end{equation}  
and, for every $j\in K$, set 
\begin{equation}
\label{e:w5}
\begin{cases}
\boldsymbol{b}_{j,n}=b_{j,n},\;
\boldsymbol{e}_{j,n}^*=e_{j,n}^*,\;
\boldsymbol{e}_{j,n}=e_{j,n},\;
\boldsymbol{q}_{j,n}^*=q_{j,n}^*,\;
\boldsymbol{t}_{j,n}^*=t_{j,n}^*,\\
\boldsymbol{y}_{j,n}=y_{j,n},\;
\boldsymbol{u}_{j,n}=u_{j,n},\;
\boldsymbol{v}_{j,n}^*=v_{j,n}^*,\;
\boldsymbol{d}_{j,n}=d_{j,n}.
\end{cases}
\end{equation}
Using \eqref{e:w1}, \eqref{e:w2}, \eqref{e:w3}, \eqref{e:w4},
\eqref{e:w5}, and elementary manipulations, we reduce
\eqref{e:long17} to 
\begin{align}
\label{e:long19}
&
\begin{array}{l}
\text{for}\;n=0,1,\ldots\\
\left|
\begin{array}{l}
\boldsymbol{l}_n^*=\boldsymbol{Q}\boldsymbol{z}_n+
\sum_{j\in P}\boldsymbol{M}_j^*\boldsymbol{v}_{j,n}^*;\\
\boldsymbol{a}_n=J_{\gamma_n\boldsymbol{A}}\big(\boldsymbol{z}_n
-\gamma_n(\boldsymbol{l}_n^*+
\boldsymbol{C}\boldsymbol{z}_n)\big);\\
\boldsymbol{a}_n^*=\gamma_n^{-1}(\boldsymbol{z}_n
-\boldsymbol{a}_n)-\boldsymbol{l}_n^*+
\boldsymbol{Q}\boldsymbol{a}_n;\\
\xi_n=\|\boldsymbol{a}_n-\boldsymbol{z}_n\|^2;\\
\text{for every}\;j\in P_n\\
\left\lfloor
\begin{array}{l}
\boldsymbol{b}_{j,n}=J_{\mu_{j,n}\boldsymbol{B}_j}
\big(\boldsymbol{y}_{j,n}+\mu_{j,n}\boldsymbol{v}_{j,n}^*\big);\\
\boldsymbol{d}_{j,n}=J_{\nu_{j,n}\boldsymbol{D}_j}
\big(\boldsymbol{u}_{j,n}+\nu_{j,n}\boldsymbol{v}_{j,n}^*\big);\\
\boldsymbol{e}_{j,n}=\boldsymbol{b}_{j,n}+\boldsymbol{d}_{j,n}
-{\boldsymbol{M}}_j\boldsymbol{a}_n;\\
\boldsymbol{e}_{j,n}^*=\sigma_{j,n}
\big(\boldsymbol{M}_j\boldsymbol{z}_n
-\boldsymbol{y}_{j,n}-\boldsymbol{u}_{j,n}\big)+
\boldsymbol{v}_{j,n}^*;\\
\boldsymbol{q}_{j,n}^*=\mu_{j,n}^{-1}
(\boldsymbol{y}_{j,n}-\boldsymbol{b}_{j,n})+\boldsymbol{v}_{j,n}^*
-\boldsymbol{e}_{j,n}^*;\\
\boldsymbol{t}_{j,n}^*=\nu_{j,n}^{-1}(\boldsymbol{u}_{j,n}
-\boldsymbol{d}_{j,n})+\boldsymbol{v}_{j,n}^*-
\boldsymbol{e}_{j,n}^*;\\
\eta_{j,n}=\|\boldsymbol{b}_{j,n}-\boldsymbol{y}_{j,n}\|^2
+\|\boldsymbol{d}_{j,n}-\boldsymbol{u}_{j,n}\|^2;
\end{array}
\right.\\
\end{array}      
\right.
\end{array}
\\
&\nonumber
\begin{array}{l}
\left\lfloor
\begin{array}{l} 
\text{for every}\;j\in P\smallsetminus P_n\\
\left\lfloor
\begin{array}{l}
\boldsymbol{b}_{j,n}=\boldsymbol{b}_{j,n-1};\;
\boldsymbol{d}_{j,n}=\boldsymbol{d}_{j,n-1};\;
\boldsymbol{e}_{j,n}^*=\boldsymbol{e}_{j,n-1}^*;\;
\boldsymbol{q}_{j,n}^*=\boldsymbol{q}_{j,n-1}^*;\;
\boldsymbol{t}_{j,n}^*=\boldsymbol{t}_{j,n-1}^*;\\
\boldsymbol{e}_{j,n}=\boldsymbol{b}_{j,n}+\boldsymbol{d}_{j,n}
-\boldsymbol{M}_j\boldsymbol{a}_n;\;
\eta_{j,n}=\eta_{j,n-1};\\
\end{array}
\right.\\[4mm]
\boldsymbol{p}_n^*=\boldsymbol{a}_n^*+
\sum_{j\in P}{\boldsymbol{M}}_j^*\boldsymbol{e}_{j,n}^*;\\[2mm]
\begin{aligned}
\Delta_n&=-(4\beta)^{-1}\big(\xi_n
+\textstyle{\sum_{j\in P}}\eta_{j,n}\big)+\scal{\boldsymbol{z}_n-
\boldsymbol{a}_n}{\boldsymbol{p}_n^*}\\[1mm]
&\textstyle
\quad\;+\sum_{j\in P}\big(\scal{\boldsymbol{y}_{j,n}-
\boldsymbol{b}_{j,n}}{\boldsymbol{q}_{j,n}^*}
+\scal{\boldsymbol{u}_{j,n}-\boldsymbol{d}_{j,n}}
{\boldsymbol{t}_{j,n}^*}+\scal{\boldsymbol{e}_{j,n}}
{\boldsymbol{v}_{j,n}^*-\boldsymbol{e}_{j,n}^*}\big);
\end{aligned}\\
\text{if}\;\Delta_n>0\\
\left\lfloor
\begin{array}{l}
\theta_n=\lambda_n\Delta_n/
\big(\|\boldsymbol{p}_n^*\|^2+\sum_{j\in P}\big(
\|\boldsymbol{q}_{j,n}^*\|^2+\|\boldsymbol{t}_{j,n}^*\|^2
+\|\boldsymbol{e}_{j,n}\|^2\big)\big);\\
\boldsymbol{z}_{n+1}=\boldsymbol{z}_n-\theta_n\boldsymbol{p}_n^*;\\
\text{for every}\;j\in P\\
\left\lfloor
\begin{array}{l}
\boldsymbol{y}_{j,n+1}=
\boldsymbol{y}_{j,n}-\theta_n\boldsymbol{q}_{j,n}^*;\;
\boldsymbol{u}_{j,n+1}=
\boldsymbol{u}_{j,n}-\theta_n\boldsymbol{t}_{j,n}^*;\;
\boldsymbol{v}_{j,n+1}^*=
\boldsymbol{v}_{j,n}^*-\theta_n\boldsymbol{e}_{j,n};
\end{array}
\right.\\[1mm]
\end{array}
\right.\\
\text{else}\\
\left\lfloor
\begin{array}{l}
\boldsymbol{z}_{n+1}=\boldsymbol{z}_n;\\
\text{for every}\;j\in P\\
\left\lfloor
\begin{array}{l}
\boldsymbol{y}_{j,n+1}=\boldsymbol{y}_{j,n};\;
\boldsymbol{u}_{j,n+1}=\boldsymbol{u}_{j,n};\;
\boldsymbol{v}_{j,n+1}^*=\boldsymbol{v}_{j,n}^*.
\end{array}
\right.\\[1mm]
\end{array}
\right.\\[9mm]
\end{array}
\right.
\end{array}
\end{align}
Since \eqref{e:1p} corresponds to 
a particular instance of \cite[Problem~1]{Moor22} and 
\eqref{e:long19} is the application of \cite[Algorithm~1]{Moor22} 
to solve \eqref{e:1p}, the result follows from 
\cite[Theorem~1(iv)]{Moor22}.
\end{proof}

\begin{remark}\
\label{r:5}
\begin{enumerate}
\item
The weakly convergent algorithm \eqref{e:long17} is based on
\cite[Algorithm~1]{Moor22}. We can also derive from 
\cite[Theorem~2(iv)]{Moor22} a strongly convergent variant of it
which does not require additional assumptions on
Problem~\ref{prob:11}. 
\item
Although this point is omitted for simplicity, 
\cite[Algorithm~1]{Moor22} and \cite[Theorem~2(iv)]{Moor22} allow
for asynchronous implementations, in the sense that any iteration
can incorporate the result of calculations initiated at earlier
iterations. In turn, \eqref{e:long17} (and its strongly convergent
counterpart) can also be executed asynchronously.
\end{enumerate}
\end{remark}

\section{Applications}
\label{sec:6}

We describe a few applications of our framework to variational
problems.

\begin{example}
\label{ex:11}
In Problem~\ref{prob:11}, suppose that, for every $k\in K$,
$B_k=N_{E_k}$ and $D_k=N_{F_k}$, where $E_k$ and $F_k$ are nonempty
closed convex subsets of $\YY_k$ such that
$0\in\sri(\barc E_k-\barc F_k)$. Define
\begin{equation}
\label{e:v3}
G=\menge{x\in\XX}{(\forall k\in K)\;L_kx\in E_k+F_k}.
\end{equation}
Note that, for every $k\in K$, 
$0\in\sri(\barc E_k-\barc F_k)=
\sri(\dom\iota_{E_k}^*-\dom\iota_{F_k}^*)$ and, therefore,
Lemma~\ref{l:4} and \eqref{e:v3} imply that 
\begin{align}
\sum_{k\in K}L_k^*\circ(N_{E_k}\infconv N_{F_k})\circ L_k
&=\sum_{k\in K}L_k^*\circ(\partial\iota_{E_k}
\infconv\partial\iota_{F_k})\circ L_k
\nonumber\\
&=\sum_{k\in K}L_k^*\circ\partial(\iota_{E_k}
\infconv\iota_{F_k})\circ L_k\nonumber\\
&=\sum_{k\in K}L_k^*\circ\partial(\iota_{E_k+F_k})\circ L_k
\nonumber\\
&\subset\partial\bigg(\sum_{k\in K}\iota_{E_k+F_k}\,\circ L_k\bigg)
\nonumber\\
&=\partial\bigg(\sum_{k\in K}\iota_{L_k^{-1}(E_k+F_k)}\bigg)
\nonumber\\
&=\partial\iota_{\bigcap_{k\in K}L_k^{-1}(E_k+F_k)}
\nonumber\\
&=N_G.
\end{align}
Thus, every solution to Problem~\ref{prob:11} solves the 
variational inequality problem
\begin{equation}
\label{e:u7}
\text{find}\;\;x\in G\;\;\text{such that}\quad
(\forall y\in G)\bigg(\exi x^*\in Ax+\sum_{i\in I}
\partial (\phi_i\circ f_i)(x)\bigg)\;\;\pair{y-x}{x^*+Cx+Qx}\geq 0.
\end{equation}
The type of constraint set \eqref{e:v3} appears in
\cite{Moor22,Wang20}. To solve \eqref{e:u7} via \eqref{e:long17},
we require the resolvents $J_{\mu_{k,n}B_k}=\proj_{E_k}$, and
$J_{\nu_{k,n}D_k}=\proj_{F_k}$.
\end{example}

\begin{example}
\label{ex:32}
Consider the implementation of Problem~\ref{prob:11} in which
\begin{equation}
A=\partial h_1,\;\;C=\nabla h_2,\;\;Q=0,
\;\;\text{and}\;\;(\forall k\in K)\;\;
B_k=\partial g_k\;\,\text{and}\;\,D_k=\partial\ell_k,
\end{equation}
where $h_1\in\Gamma_0(\XX)$, $h_2\colon\XX\to\RR$ is a 
differentiable convex function with a Lipschitzian gradient, and 
for every $k\in K$, $g_k$ and $\ell_k$ are functions in 
$\Gamma_0(\YY_k)$ such that $0\in\sri(\dom g_k^*-\dom\ell_k^*)$.
Then, if $\overline{x}\in\XX$ solves Problem~\ref{prob:11}, it
solves 
\begin{equation}
\label{e:p3}
\minimize{x\in\XX}{\sum_{i\in I}(\phi_i\circ f_i)(x)+
\sum_{k\in K}(g_k\infconv\ell_k)(L_kx)}+h_1(x)+h_2(x).
\end{equation}
To solve this problem via \eqref{e:long17}, we require the
resolvents $J_{\gamma_n{A}}=\prox_{\gamma_n h_1}$, 
$J_{\mu_{k,n}B_k}=\prox_{\mu_{k,n} g_k}$, and
$J_{\nu_{k,n}D_k}=\prox_{\nu_{k,n} \ell_k}$.
\end{example}
\begin{proof}
First, in view of \cite[Corollaire~10]{Bail77}, $C$ is cocoercive.
Next, for every $k\in K$, it follows from Lemma~\ref{l:4} that
$\partial g_k\infconv\partial\ell_k=\partial(g_k\infconv\ell_k)$.
Since $\overline{x}\in\XX$ solves Problem~\ref{prob:11}, 
\cite[Theorem~2.4.2(vii)--(viii)]{Zali02}
yields
\begin{align}
0&\in \sum_{i\in I}\partial (\phi_i\circ f_i)(\overline{x})
+\sum_{k\in K}\big(L^*_k\circ(\partial g_k\infconv \partial
\ell_k)\circ 
L_k\big)\overline{x}+\partial h_1(\overline{x})+\nabla 
h_2(\overline{x})\nonumber\\
&\subset \partial\bigg(\sum_{i\in I}(\phi_i\circ f_i)
+\sum_{k\in K}( g_k\infconv  \ell_k) 
\circ L_k+ h_1+h_2\bigg)(\overline{x}),
\end{align}
and the result follows from Lemma~\ref{l:2}\ref{l:2iv}.
\end{proof}

\begin{remark}
\label{r:13}
In Example~\ref{ex:32}, if $I$ and $K$ are singletons and $h_2=0$, 
we recover Problem~\ref{prob:1}, which can therefore be solved as
an instance of \eqref{e:long17} in Theorem~\ref{t:3}.
\end{remark}

The next example was suggested to us by Roberto Cominetti.

\begin{example}
\label{ex:13}
Let $\mathsf{X}$ be a reflexive real Banach space, let $J$ and $K$
be disjoint finite sets, and let
$\mathsf{h}_0\in\Gamma_0(\mathsf{X})$. 
For every $j\in J$, let $0<m_j\in\NN$,
let $\Phi_j\in\Gamma_0(\RR^{m_j})$ be
increasing in each component, let 
$(\mathsf{f}_{j,s})_{1\leq s\leq m_j}$ 
be functions in $\Gamma_0(\mathsf{X})$, and let
$\mathsf{F}_j\colon\mathsf{x}\mapsto(\mathsf{f}_{j,1}
(\mathsf{x}),\ldots,\mathsf{f}_{j,m_j}(\mathsf{x}))$. Further, for
every $k\in K$, let $\YY_k$ be a reflexive real Banach space, let
$g_k$ and $\ell_k$ be functions in $\Gamma_0(\YY_k)$ such that
$0\in\sri(\dom g_k^*-\dom\ell_k^*)$, and let
$\mathsf{L}_k\colon\mathsf{X}\to\YY_k$ be linear and bounded.
Consider the problem
\begin{equation}
\label{e:p1vec}
\minimize{\mathsf{x}\in\mathsf{X}}{\sum_{j\in J}
(\Phi_j\circ \mathsf{F}_j)(\mathsf{x})+
\sum_{k\in K}(g_k\infconv\ell_k)(\mathsf{L}_k\mathsf{x})
+\mathsf{h}_0(\mathsf{x})}.
\end{equation}
To show that \eqref{e:p1vec} is a special case of \eqref{e:p3} in 
Example~\ref{ex:32}, let us first observe that, since 
$(\Phi_j)_{j\in J}$ are increasing componentwise, 
\begin{align}
(\forall j\in J)\big(\forall\mathsf{x}\in 
\mathsf{F}_j^{-1}(\dom\Phi_j)\big)\quad 
(\Phi_j\circ\mathsf{F}_j)(\mathsf{x})
&=\min_{\substack{(\zeta_{s})_{1\leq s\leq m_j}\in\RR^{m_j}\\
(\forall s\in\{1,\ldots,m_j\})\;\;
\mathsf{f}_{j,s}(\mathsf{x})\leq \zeta_s}}
\Phi_j(\zeta_1,\ldots,\zeta_{m_j})\nonumber\\
&=\min_{\mathsf{z}\in\RR^{m_j}}\big(
\iota_{\RM^{m_j}}(\mathsf{F}_j(\mathsf{x})-\mathsf{z})
+\Phi_j(\mathsf{z})\big). 
\end{align}
Therefore, \eqref{e:p1vec} is equivalent to 
\begin{equation}
\label{e:p1vecm}
\minimize{\substack{\mathsf{x}\in \mathsf{X}\\(\forall j\in J)
\:\mathsf{z}_j\in \RR^{m_j}}}{\sum_{j\in J}
\iota_{\RM^{m_j}}(\mathsf{F}_j(\mathsf{x})-\mathsf{z}_j)+
\sum_{j\in J}\Phi_j(\mathsf{z}_j)+
\sum_{k\in K}(g_k\infconv\ell_k)(\mathsf{L}_k\mathsf{x})
+\mathsf{h}_0(\mathsf{x})}.
\end{equation}
Next, let us set $\XX=\mathsf{X}\times\Cart_{\!j\in J}\RR^{m_j}$ 
and $I=\menge{(j,s)}{j\in J,\,1\leq s\leq m_j}$. In addition, for
every $j\in J$, let $\mathsf{z}_j=(\zeta_{j,s})_{1\leq s\leq m_j}
\in\RR^{m_j}$ and, for every $i=(j,s)\in I$, let
$\phi_{i}=\iota_{\RM}$ and 
$f_i\colon (\mathsf{x},(\mathsf{z}_j)_{j\in J})
\mapsto \mathsf{f}_{j,s}(\mathsf{x})-\zeta_{j,s}$. 
Finally, set $h_1\colon(\mathsf{x},(\mathsf{z}_j)_{j\in J})\mapsto 
\mathsf{h}_0(\mathsf{x})+\sum_{j\in J}\Phi_j(\mathsf{z}_j)$, 
$h_2=0$, and, for every
$k\in K$, $L_k\colon(\mathsf{x},(\mathsf{z}_j)_{j\in J})\mapsto 
\mathsf{L}_k\mathsf{x}$. Since the functions $(f_i)_{i\in I}$ 
and $h_1$ are in $\Gamma_0(\XX)$, 
\eqref{e:p1vecm} is a particular instance of \eqref{e:p3}.
In connection with solving
\eqref{e:p1vec} via Theorem~\ref{t:3} when $\mathsf{X}$ is a
Hilbert space, let us note that, for every 
$\mu\in\RPP$ and every $i=(j,s)\in I$, the computation of 
$\mathsf{w}_i$ is similar to that in
Example~\ref{ex:1b}, while
\begin{equation}
\prox_{\mu f_{i}}\colon
\big(\mathsf{x},(\mathsf{z}_{j'})_{j'\in J}\big)
\mapsto \big(\prox_{\mu\mathsf{f}_{j,s}}\mathsf{x}\,,
(\mathsf{z}_{j'}+\mu\delta_{j,j'}\mathsf{e}_{j,s})_{j'\in J}\big),
\end{equation}
where $\mathsf{e}_{j,s}$ is the $s$th canonical vector of 
$\RR^{m_j}$ and $\delta$ is the Kronecker symbol. Furthermore, for
every $\mu\in\RPP$, 
\begin{equation}
\label{e:un4}
\prox_{\mu h_1}\colon 
\big(\mathsf{x},(\mathsf{z}_{j})_{j\in J}\big)\mapsto
\big(\prox_{\mu\mathsf{h}_0}\mathsf{x}\,,
(\prox_{\mu\Phi_j}\mathsf{z}_{j})_{j\in J}\big).
\end{equation}
\end{example}

\begin{example}
\label{ex:90}
In Example~\ref{ex:13}, suppose that
\begin{equation}
\label{e:k76}
(\forall j\in J)\quad
\Phi_j\colon(\zeta_{j,s})_{1\leq s\leq m_j}\mapsto
\max_{1\leq s\leq m_j}\zeta_{j,s}. 
\end{equation}
Then \eqref{e:p1vec} reduces to
\begin{equation}
\label{e:14}
\minimize{\mathsf{x}\in \mathsf{X}}{\sum_{j\in J}
\max_{1\leq s\leq m_j}\mathsf{f}_{j,s}(\mathsf{x})+
\sum_{k\in K}(g_k\infconv\ell_k)(\mathsf{L}_k\mathsf{x})
+\mathsf{h}_0(\mathsf{x})}.
\end{equation}
In a Hilbertian setting, using the numerical approach outlined
in Theorem~\ref{t:3}, we can solve \eqref{e:14} via an algorithm
requiring only the proximity operators of the functions
$\mathsf{f}_{j,s}$, $g_k$, $\ell_k$, and $\mathsf{h}_0$. This
appears to be the first proximal algorithm to solve
problems with such a mix of functions involving maxima. As seen in
\eqref{e:un4}, to implement \eqref{e:long17}, we require the
proximity operators of the functions in \eqref{e:k76}. These can be
computed by using \cite[Example~24.25]{Livre1}.
\end{example}

\end{document}